\documentclass[reqno]{amsart}

\usepackage[utf8]{inputenc}
\usepackage{amsmath,amsthm,amssymb,stmaryrd,mathrsfs}

\usepackage{graphicx}
\usepackage{microtype}
\usepackage{xstring,nicefrac}\usepackage{refcount}\usepackage{lastpage}

\usepackage[margin=1in]{geometry}

\usepackage{setspace}
\onehalfspacing

\newtheorem{Theorem}{Theorem}[section]
\newtheorem{Proposition}[Theorem]{Proposition} 
\newtheorem{Lemma}[Theorem]{Lemma}
\newtheorem{Corollary}[Theorem]{Corollary}
\newtheorem{Example}[Theorem]{Example}
\newtheorem*{Claim}{Claim}

\theoremstyle{definition}
\newtheorem{Definition}[Theorem]{Definition}

\newtheorem{Question}[Theorem]{Question}

\theoremstyle{remark}
\newtheorem{Remark}[Theorem]{Remark}

\DeclareMathOperator{\dom}{\mathsf{dom}}

\newcommand{\andd}{\wedge}

\newcommand{\la}{\langle}
\newcommand{\ra}{\rangle}
\newcommand{\da}{\!\downarrow}
\newcommand{\ua}{\!\uparrow}
\newcommand{\imp}{\rightarrow}

\newcommand{\Pb}{\mathbb{P}}
\newcommand{\Qb}{\mathbb{Q}}

\newcommand{\cs}{2^\omega}
\newcommand{\str}{2^{<\omega}}
\newcommand{\uh}{{\upharpoonright}}
\newcommand{\wtwor}{\mathsf{W2R}}
\newcommand{\bwtwor}{\mathsf{bW2R}}
\newcommand{\mlr}{\mathsf{MLR}}
\newcommand{\bmlr}{\mathsf{bMLR}}

\newcommand{\twomlr}{\mathsf{2MLR}}

\newcommand{\halts}{{\downarrow}}
\newcommand{\llb}{\llbracket}
\newcommand{\rrb}{\rrbracket}
\newcommand{\binrat}{\mathbb{Q}^+_2}

\newcommand{\emptystr}{\varepsilon}

\def§#1§{\text{#1}}%

%

\def\jump{\emptyset'}
\def\str{2^{<\omega}}
\def\br#1{\overline{#1}}

\def\compl{^\mathsf{c}}



\newcommand{\MP}[1]{\mathcal{#1}}

\title{Randomness and Semi-measures}
\author{Laurent Bienvenu}
\address{Laboratoire CNRS J.-V. Poncelet, 119002, Bolshoy Vlasyevskiy Pereulok 11, Moscow, Russia}
\email{laurent.bienvenu@computability.fr}
\urladdr{www.liafa.univ-paris-diderot.fr/$\sim$lbienven}
\thanks{Laurent Bienvenu is supported by the ANR project ``EQINOCS" (ANR-11-BS02-004).}

\author{Rupert H\"olzl}
\address{Institut für Theoretische Informatik, Mathematik und
Operations Research, Fakultät für Informatik, Universität der Bundeswehr München, Werner-Heisenberg-Weg 39, 85577~Neubiberg, Germany}
\email{r@hoelzl.fr}
\urladdr{http://hoelzl.fr}
\thanks{Rupert Hölzl is supported by a Feodor Lynen postdoctoral research fellowship by the Alexander von Humboldt Foundation.}

\author{Christopher P. Porter}
\address{LIAFA\\
Universit\'e Paris Diderot - Paris 7\\
Case 7014\\
75205 Paris Cedex 13\\ 
France}
\email{cp@cpporter.com}
\urladdr{http://cpporter.com}
\thanks{Christopher Porter is supported by the National Science Foundation under grant OISE-1159158 as part of the International Research Fellowship Program.}

\author{Paul Shafer}
\address{Department of Mathematics\\
Ghent University\\
Krijgslaan 281, S22\\
B-9000 Ghent\\
Belgium}
\email{paul.shafer@ugent.be}
\urladdr{http://cage.ugent.be/$\sim$pshafer/}

\thanks{Paul Shafer is an FWO Pegasus Long Postdoctoral Fellow, and he also acknowledges the support of the Fondation Sciences Math\'ematiques de Paris.}

\keywords{semi-measures, measures, algorithmic randomness, randomness}
\subjclass[2010]{03D32 --- Algorithmic randomness and dimension}

\date{\today}

\begin{document}

\begin{abstract}
A semi-measure is a generalization of a probability measure obtained by relaxing the additivity requirement to super-additivity.  We introduce and study several randomness notions for left-c.e.\ semi-measures, a natural class of effectively approximable semi-measures induced by Turing functionals.  Among the randomness notions we consider, the generalization of weak $2$-randomness to left-c.e.\ semi-measures is the most compelling, as it best reflects Martin-L\"of randomness with respect to a computable measure.  Additionally, we analyze a question of Shen from \cite{BecBieDow12}, a positive answer to which would also have yielded a reasonable randomness notion for left-c.e.\ semi-measures. Unfortunately though, we find a negative answer, except for some special cases.
\end{abstract}

\maketitle

\section{Introduction}
Suppose we have an algorithmic procedure $P$ that, upon receiving an infinite binary sequence as an input, yields either an infinite binary sequence or a finite binary string as the output.  The question we investigate here is:
\begin{itemize}
\item[($\mathcal{Q}$)] What is the typical infinite output of $P$?
\end{itemize}
In the case that $P$ always produces an infinite output or produces an infinite output with probability one, there is already a complete answer to ($\mathcal{Q}$), insofar as we understand typicality in terms of Martin-L\"of randomness.  In this case, the typical outputs of $P$ are determined precisely by the behavior of $P$ on all random inputs: the procedure $P$ and the Lebesgue measure $\lambda$ together induce a measure $\lambda_P$ (in a sense to be made precise below) so that the typical infinite outputs of $P$ are exactly the sequences that are random with respect to the measure $\lambda_P$.

In this paper, we attempt to answer the question ($\mathcal{Q}$) in the case where the algorithmic procedure $P$ does not produce an infinite output with probability one.  Whereas an algorithmic procedure that yields an infinite output with probability one induces a computable measure, a procedure that yields an infinite output with probability less than one induces what is known as a \emph{left-c.e.\ semi-measure}, where a semi-measure is a function $\rho:\str\rightarrow[0,1]$ which satisfies
\begin{itemize}
\item[(i)] $\rho(\emptystr)=1$ and 
\item[(ii)] $\rho(\sigma)\geq\rho(\sigma0)+\rho(\sigma1)$,
\end{itemize}  
where $\emptystr$ denotes the empty string, and a semi-measure is left-c.e.\ if it is effectively approximable from below.


As will be discussed in the next section, we can reformulate the question ($\mathcal{Q}$) as
\begin{itemize}
\item[($\mathcal{Q}'$)] Which infinite sequences are random with respect to the left-c.e.\ semi-measure induced by the Turing functional $\Phi$?
\end{itemize}
Clearly, answering this question requires a definition of randomness with respect to a left-c.e.\ semi-measure, but there is currently no such definition available.

One attempt to answering ($\mathcal{Q}'$) was suggested by Shen at a recent meeting at Dagstuhl, ``Computability, Complexity, and Randomness'' (see \cite{BecBieDow12}).  There he asked the following question, which was already raised in \cite{SheBieRom08}.
\begin{Question}\label{q:shen}
If $\Phi$ and $\Psi$ are Turing functionals that induce the same left-c.e.\ semi-measure, does it follow that $\Phi(\mlr)=\Psi(\mlr)$?
\end{Question}

\noindent The relevance of Shen's question to the task of defining randomness with respect to a left-c.e.\ semi-measure is this:  suppose Question \ref{q:shen} has a positive answer.  Then we can define $Y\in\cs$ to be $\rho$-random if and only if there is some $X\in\cs$ such that $\Phi(X)=Y$ for any $\Phi$ that induces~$\rho$.  We call this the \emph{push-forward} definition of randomness with respect to a semi-measure.  

In this paper, we show that Shen's question has a positive answer in the restricted case that $\Phi$ and $\Psi$ induce a \emph{computable} semi-measure.  Moreover, we show that if the definition of Martin-L\"of randomness for computable measures is extended to computable semi-measures, the resulting definition is equivalent to the push-forward definition of randomness with respect to a computable semi-measure.

The situation is much less straightforward when we consider left-c.e.\ semi-measures.  First, we show that Shen's question has a negative answer in this more general setting, and thus we need a different strategy for answering ($\mathscr{Q}'$).  Towards this end, we consider two general approaches to defining randomness with respect to a left-c.e.\ semi-measure:  
\begin{enumerate}
\item defining randomness with respect to a semi-measure by a direct adaptation of standard definitions of randomness with respect to computable measures, and 
\item defining randomness with respect to a semi-measure in terms of a specific measure derived from trimming back a given semi-measure to a measure.
\end{enumerate}
Although we prove a number of results about these candidate definitions, no definition has yet to emerge as the most well-behaved.  However, some of the results we present indicate that weak 2-randomness is a promising notion in this context.

The remainder of the paper is organized as follows.  In Section 2, we provide the necessary background on randomness with respect to computable and non-computable measures.  We also discuss some basic results about the relationship between semi-measures and Turing functionals.  In Section 3 we answer Shen's question when restricted to the collection of computable semi-measures by formulating a definition of randomness with respect to a computable semi-measure.  In Section 4, we answer the general version of Shen's question in the negative, but we do show that a related question involving a notion of randomness that is stronger than Martin-L\"of randomness has a positive answer. In Section 5, we pursue the first strategy for answering ($\mathcal{Q}'$) discussed above, directly modifying a number of different definitions of randomness with respect to a measure.  Lastly, in Section 6, we discuss the measure obtained by trimming back a semi-measure and explore the notions of randomness with respect to such measures.  

We assume that the reader is familiar with the basic notions from computability theory:  computable functions, partial computable functions, computably enumerable sets, Turing functionals, Turing degrees, and the Turing jump.  For details on algorithmic randomness, see~\cite{DowHir10} or ~\cite{Nie09}.

 Let us fix some notation and terminology. We denote by $\cs$ the set of infinite binary sequences, also known as Cantor space. We denote the set of finite strings by~$\str$ and the empty string by~$\emptystr$. $\binrat$ is the set of non-negative dyadic rationals, i.e., rationals of the form $m/2^n$ for $m,n\in\omega$.  Given $X\in \cs$ and an integer~$n$, $X \uh n$ is the string that consists of the first~$n$ bits of~$X$, and $X(n)$ is the $(n+1)^{\mathrm{st}}$ bit of $X$ (so that $X(0)$ is the first bit of $X$).  If $\sigma$ and $\tau$ are strings, then $\sigma\preceq\tau$ means that $\sigma$ is an initial segment of $\tau$.  Similarly for $X\in\cs$, $\sigma\preceq X$ means that $\sigma$ is an initial segment of $X$.  Given a string~$\sigma$, the \emph{cylinder} $\llb\sigma\rrb$ is the set of elements of $\cs$ having~$\sigma$ as an initial segment.  Similarly, given $S\subseteq\str$, $\llb S\rrb$ is defined to be the set $\bigcup_{\sigma\in S}\llb\sigma\rrb$. The cylinders form a basis for the usual topology on the Cantor space (the product topology), and thus the open sets for this topology are those of the form $\llb S \rrb$ for some~$S$. An open set $\mathcal{U}$ is said to be \emph{effectively open} (or $\Sigma^0_1$) if $\mathcal{U}=\llb S \rrb$ for some c.e.\ set of strings~$S$. An \emph{effectively closed set} (or $\Pi^0_1$) is the complement of an effectively open set. A sequence of open sets $(\mathcal{U}_n)_{n \in \omega}$ is said to be \emph{uniformly effectively  open} if there exists a sequence $(S_n)_{n \in \omega}$ of uniformly c.e.\ sets of strings such that $\mathcal{U}_n=\llb S_n \rrb$.

\section{Preliminaries}\label{sec-prelims}

In this section, we will review the basic results of randomness with respect to computable and non-computable measures.

\subsection{Randomness with respect to computable measures}

The standard definition of algorithmic randomness is Martin-L\"of randomness, first introduced by Martin-L\"of in \cite{Mar66}. Martin-L\"of's original definition was given in terms of the Lebesgue measure, but he also recognized that it held for a larger class of probability measures.  We first define Martin-L\"of randomness with respect to a computable measure $\mu$.

A measure $\mu$ on $\cs$ is \emph{computable} if $\sigma \mapsto \mu(\llb\sigma\rrb)$ is computable as a real-valued function, i.e., if there is a computable function $\tilde \mu:\str\times\omega\rightarrow\binrat$ such that
\[|\mu(\llb\sigma\rrb)-\tilde \mu(\sigma,i)|\leq 2^{-i}\]
for every $\sigma\in\str$ and $i\in\omega$.  From now on, we will write $\mu(\llb\sigma\rrb)$ as $\mu(\sigma)$.  By Caratheodory's Theorem, a Borel measure $\mu$ on $\cs$ is uniquely determined by the values $\mu(\sigma)$ for $\sigma\in\str$, and conversely, given a function $f: \str \in [0,1]$ such that $f(\emptystr)=1$ and $f(\sigma)=f(\sigma0)+f(\sigma1)$ for all~$\sigma$, there exists a (unique) Borel measure~$\mu$ such that $\mu(\llb\sigma\rrb)=f(\sigma)$ for all~$\sigma$. 

The \emph{uniform (or Lebesgue ) measure}~$\lambda$ is the measure for which each bit of the sequence has value~$0$ with probability $1/2$, independently of the values of the other bits. It can be defined as the unique Borel measure such that $\lambda(\sigma)=2^{-|\sigma|}$ for all strings~$\sigma$.

\begin{Definition}
Let $\mu$ be a computable measure on $\cs$.

\begin{itemize}
\item[(i)] A \emph{$\mu$-Martin-L\"of test} is a sequence $(\mathcal{U}_i)_{i\in\omega}$ of uniformly effectively open subsets of $\cs$ such that for each $i$,
\[
\mu(\mathcal{U}_i)\leq 2^{-i}.
\]

\item[(ii)] $X\in\cs$ passes the $\mu$-Martin-L\"of test $(\mathcal{U}_i)_{i\in\omega}$ if $X\notin\bigcap_{i \in \omega}\mathcal{U}_i$.

\medskip

\item[(iii)] $X\in\cs$ is \emph{$\mu$-Martin-L\"of random}, denoted $X\in\mlr_\mu$, if $X$ passes every $\mu$-Martin-L\"of test. When $\mu$ is the uniform measure $\lambda$, we often abbreviate $\mlr_\mu$ by $\mlr$.
\end{itemize}
\end{Definition}

An important feature of Martin-L\"of randomness is the existence of a universal test.  For every computable measure $\mu$, there is a universal $\mu$-Martin-L\"of test $(\hat{\MP{U}}_i)_{i\in\omega}$, having the property that $X\in\mlr_\mu$ if and only if $X\notin\bigcap_{i\in\omega}\hat{\MP{U}}_i$.

\begin{Definition}
For any computable measure~$\mu$, we tacitly assume that a universal $\mu$-Martin-L\"of test $(\hat{\MP{U}}_i)_{i\in\omega}$ has been fixed, and we denote by $\mlr_\mu^d$ the $\Pi^0_1$ set $(\hat{\MP{U}}_d)\compl$, so that $\mlr_\mu$ is the non-decreasing union of the sets $\mlr_\mu^d$.
\end{Definition}

Given a measure $\mu$, we say that $X\in\cs$ is an \emph{atom of $\mu$} if $\mu(\{X\})>0$.  Kautz proved the following useful fact about the atoms of a computable measure (see \cite{Kau91}).

\begin{Lemma}\label{lem-kautz}
$X\in\cs$ is computable if and only if $X$ is an atom of some computable measure.
\end{Lemma}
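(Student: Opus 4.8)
The plan is to prove both directions of the biconditional separately.

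\medskip

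\textbf{Easy direction: computable $\Rightarrow$ atom of a computable measure.} Suppose $X \in \cs$ is computable. The most direct approach is to exhibit a computable measure having $X$ as an atom; the natural candidate is the Dirac measure $\delta_X$ concentrated on $X$, defined by $\delta_X(\sigma) = 1$ if $\sigma \preceq X$ and $\delta_X(\sigma) = 0$ otherwise. First I would verify that $\delta_X$ really is a measure: we have $\delta_X(\emptystr) = 1$, and the additivity condition $\delta_X(\sigma) = \delta_X(\sigma 0) + \delta_X(\sigma 1)$ holds because exactly one of $\sigma 0, \sigma 1$ extends $X$ precisely when $\sigma$ does. Then I would observe that $\delta_X$ is computable: since $X$ is computable, the predicate ``$\sigma \preceq X$'' is decidable, so $\sigma \mapsto \delta_X(\sigma)$ is a computable $\{0,1\}$-valued function (already exact, so the approximation condition is trivially met). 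Finally, $\delta_X(\{X\}) = \lim_n \delta_X(X\uh n) = 1 > 0$, so $X$ is an atom of $\delta_X$.

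\medskip

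\textbf{Hard direction: atom of a computable measure $\Rightarrow$ computable.} This is the substantive half, and it is where I expect the main obstacle to lie. Suppose $X$ is an atom of a computable measure $\mu$, say $\mu(\{X\}) = c > 0$. The key quantitative fact is that $\mu(\{X\}) = \lim_{n} \mu(X\uh n)$, the limit of a non-increasing sequence (by super/additivity each $\mu(\sigma)$ is at least $\mu(\{X\})$ along initial segments of $X$), so $\mu(X\uh n) \geq c$ for every $n$. The plan is to compute $X$ bit by bit by searching, at each length, for the unique extension whose $\mu$-measure stays above the threshold $c$. The difficulty is twofold: first, we only have $\mu$-values approximable to within $2^{-i}$, not exactly, so I must run the approximation finely enough to separate candidate strings; second, and more seriously, there could in principle be several strings $\sigma$ of a given length with $\mu(\sigma) \geq c$, and we must be sure we follow the branch leading to $X$.

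\medskip

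To handle the multiplicity issue I would fix a rational threshold $q$ with $0 < q < c$ (which exists once we know $c > 0$; note that $c$ itself need not be known exactly, but we can find such a $q$ by approximating $\mu(X\uh 0) = \mu(\emptystr) = 1$ and searching for any rational level that some initial segment provably exceeds, though making this uniform requires care). The cleaner route is this: since $\sum_{|\sigma| = n} \mu(\sigma) \leq \mu(\emptystr) = 1$ by additivity, there can be at most $\lfloor 1/q \rfloor$ many strings of each length with $\mu$-measure exceeding $q$. This does not immediately single out $X$, but it does show the set of ``heavy'' strings forms a tree of bounded width, and $X$ is an infinite path through it. The final step, then, is to argue that $X$ is in fact the only path whose measures are bounded below by $c$ that we can computably isolate: I would approximate each $\mu(\sigma)$ to enough precision to decide, for the finitely many heavy candidates, which ones have measure provably $\geq c - 2^{-k}$, and track that $X$ lies on a path along which the measure never drops below $c$. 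The main obstacle is precisely converting the non-uniform, merely-positive lower bound $c$ into an effective procedure; I would resolve it by noting that to compute $X$ we are permitted to use $c$ (or a rational below it) as an oracle-free constant baked into the algorithm, since computability of $X$ only asserts existence of \emph{some} Turing machine, and the constant may depend on $\mu$ and $X$.
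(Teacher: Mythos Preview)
The paper does not give its own proof of this lemma; it is stated with a citation to Kautz. However, the standard argument does appear later in the paper (in the proof of Proposition~\ref{prop-no-new-atoms}, for the semi-measure setting), and comparing your attempt with that argument exposes a real gap in your hard direction.

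Your bounded-width-tree observation is correct but does not isolate $X$. A computable measure can have several atoms, and nothing prevents two of them from having the same weight $c$; your proposal to ``track the path along which the measure never drops below $c$'' is not an algorithm, since several paths may have that property, and no amount of approximation of $\mu$ will separate two atoms of equal weight. The missing idea is to choose the threshold more aggressively: take a rational $q$ with $c/2 < q < c$ (not merely $0 < q < c$), and hard-code an $N$ large enough that $\mu(X\uh N) < 2q$, which exists because $\mu(X\uh n)\to c < 2q$. For every $n\geq N$ additivity gives $\mu((X\uh n)0)+\mu((X\uh n)1)=\mu(X\uh n)<2q$, so at most one child can have measure $\geq q$; and since $\mu(X\uh(n+1))\geq c>q$, that child is always the one along $X$. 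One then computes $X$ from $X\uh N$ onward by approximating both children until one is provably above $q$; the wrong child, having true measure $<q$, never will be. The constants $q$, $N$, and the string $X\uh N$ are baked into the machine, which is legitimate for the reason you yourself note. This is exactly the argument the paper carries out in Proposition~\ref{prop-no-new-atoms}.
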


There is an intimate connection between Martin-L\"of random sequences and a class of effective functionals that induce computable measures, a connection that we would like to preserve when we formulate a definition of randomness with respect to a semi-measure.  Recall that a \emph{Turing functional} $\Phi:\subseteq\cs\rightarrow\cs$ may be defined as a c.e.\ set of pairs of strings $(\sigma,\tau)$ such that 
if $(\sigma,\tau),(\sigma',\tau')\in\Phi$ and $\sigma\preceq\sigma'$, then $\tau\preceq\tau'$ or $\tau'\preceq\tau$.  For each $\sigma\in\str$, we define $\Phi^\sigma$ to be the maximal string (for the prefix order) in $\{\tau: (\exists \sigma'\preceq\sigma)((\sigma',\tau)\in\Phi)\}$.  To obtain a map defined on $\cs$ from this c.e.\ set of pairs, for each $X\in\cs$, we let $\Phi^X$ be the maximal (for the prefix order) sequence~$Y$ such that $\Phi^{X \uh n}$ is a prefix of~$Y$ for all~$n$.   Note that $Y$ can be finite or infinite. We will thus set
$\dom(\Phi)=\{X\in\cs:\Phi^X\in\cs\}$.  When $\Phi^X\in\cs$, we will often write $\Phi^X$ as $\Phi(X)$ to emphasize the functional $\Phi$ as a map from~$\cs$ to~$\cs$.  
For $\tau \in \str$ let $\Phi^{-1}(\tau)$ be the set 
$\{ \sigma\in \str : \exists \tau' \succeq \tau \colon (\sigma,\tau')\in\Phi\}$.  Similarly, for $S \subseteq \str$ we define $\Phi^{-1}(S) = \bigcup_{\tau \in S} \Phi^{-1}(\tau)$. When $\mathcal{A}$ is a subset of $\cs$, we denote by $\Phi^{-1}(\mathcal{A})$ the set $\{X\in \dom(\Phi):\Phi(X) \in \mathcal{A}\}$. Note in particular that $\Phi^{-1}(\llb\tau\rrb)
= \llb\Phi^{-1}(\tau)\rrb \cap \dom(\Phi)$.

The Turing functionals that induce computable measures are precisely the \emph{almost total} Turing functionals, where a Turing functional $\Phi$ is almost total if 
\[
\lambda(\mathsf{dom}(\Phi))=1.
\]
Given an almost total Turing functional $\Phi$, the measure induced by $\Phi$, denoted $\lambda_\Phi$, is defined by
\[
\lambda_\Phi(\sigma)=\lambda(\llb\Phi^{-1}(\sigma)\rrb)
=\lambda(\{X:\Phi^X\succeq\sigma\}).
\]
It is not difficult to verify that $\lambda_\Phi$ is a computable measure.  Moreover, given a computable measure $\mu$, one can show that there is some almost total functional $\Phi$ such that $\mu=\lambda_\Phi$.

The following two results are very useful.  The first one, due to Levin and Zvonkin \cite{ZvoLev70} and known as the preservation of randomness theorem, says that randomness is preserved under almost total Turing functionals. The second one, due to Shen (unpublished, but see \cite{SheBieRom08}), is a partial converse of the preservation randomness theorem.  It says that sequences that are random with respect to some computable measure must have some unbiased random source.  We thus refer to this result as the \emph{No Randomness Ex Nihilo principle}, to reflect that one cannot produce randomness solely out of non-random sources.

\begin{Theorem} \label{thm:pres-ex-nihilo} Let $\Phi$ be an almost total Turing functional.
\begin{itemize}
\item[(i)]  Preservation of randomness: If $X\in\mlr$, $\Phi(X)\in\mlr_{\lambda_\Phi}$.
\item[(ii)] No Randomness Ex Nihilo principle:  If $Y\in\mlr_{\lambda_\Phi}$, there is some $X\in\mlr$ such that $\Phi(X)=Y$.
\end{itemize}
\end{Theorem}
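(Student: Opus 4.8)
The plan is to prove both parts by building appropriate Martin-Löf tests and transporting them across the functional $\Phi$, using the relation $\Phi^{-1}(\llb\tau\rrb)=\llb\Phi^{-1}(\tau)\rrb\cap\dom(\Phi)$ established in the preliminaries, together with the fact that $\lambda_\Phi(\sigma)=\lambda(\llb\Phi^{-1}(\sigma)\rrb)$.

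For part (i), preservation of randomness, I would argue by contraposition: assume $Y=\Phi(X)\notin\mlr_{\lambda_\Phi}$ and produce a $\lambda$-Martin-Löf test capturing $X$. If $(\mathcal{V}_i)_{i\in\omega}$ is a $\lambda_\Phi$-Martin-Löf test with $Y\in\bigcap_i\mathcal{V}_i$, write each $\mathcal{V}_i=\llb S_i\rrb$ for uniformly c.e.\ sets $S_i\subseteq\str$. Define $\mathcal{U}_i=\Phi^{-1}(\mathcal{V}_i)=\llb\Phi^{-1}(S_i)\rrb\cap\dom(\Phi)$. Since $\dom(\Phi)$ is a $\Sigma^0_2$ set in general, the sets $\llb\Phi^{-1}(S_i)\rrb$ are the relevant uniformly effectively open sets, and I would check the measure bound $\lambda(\llb\Phi^{-1}(S_i)\rrb)\leq\lambda_\Phi(\mathcal{V}_i)\leq 2^{-i}$, which follows because the preimages of distinct cylinders in $S_i$ have $\lambda$-measure summing to at most $\lambda_\Phi(\mathcal{V}_i)$ by the definition of $\lambda_\Phi$ and super-additivity. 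Because $\Phi(X)=Y\in\mathcal{V}_i$ for every $i$, we get $X\in\llb\Phi^{-1}(S_i)\rrb$ for every $i$, so $X$ fails this $\lambda$-test and hence $X\notin\mlr$.

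For part (ii), the No Randomness Ex Nihilo principle, suppose $Y\in\mlr_{\lambda_\Phi}$; I want some $X\in\mlr\cap\Phi^{-1}(\{Y\})$. The natural strategy is to consider the class $\mathcal{C}=\Phi^{-1}(\{Y\})=\{X\in\dom(\Phi):\Phi(X)=Y\}$ and show it must contain a Martin-Löf random element. I would show $\lambda(\mathcal{C})>0$ is not guaranteed, so instead I would use a relativized or effective argument: first establish that $\lambda(\Phi^{-1}(\llb Y\uh n\rrb))=\lambda_\Phi(Y\uh n)$ is a sequence of nested $\lambda$-measurable sets whose intersection is $\mathcal{C}$ together with sequences mapping to proper extensions of $Y$. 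The key point is that because $Y$ is $\lambda_\Phi$-random, the measures $\lambda_\Phi(Y\uh n)$ do not shrink too fast relative to what a $\lambda$-test could capture; by preservation of randomness (part (i)), no $\lambda$-random $X$ with $\Phi(X)=Y$ can be excluded by a test, so I would assemble the required $X$ by a compactness or measure-theoretic selection argument showing that $\mlr\cap\Phi^{-1}(\llb Y\uh n\rrb)$ is nonempty and nested, and take a point in the intersection.

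The hard part will be part (ii). The difficulty is that $\Phi^{-1}(\{Y\})$ may have $\lambda$-measure zero, so one cannot simply invoke "a positive-measure set contains a random point." The genuine content is to exploit the randomness of $Y$ quantitatively: one must show that if every $X$ with $\Phi(X)=Y$ were non-random, the union of the tests witnessing their non-randomness, pushed forward through $\Phi$, would yield a $\lambda_\Phi$-test capturing $Y$, contradicting $Y\in\mlr_{\lambda_\Phi}$. Making this pushforward argument precise—controlling the measure of the image under $\Phi$ of a $\lambda$-test and ensuring uniform effectivity despite $\dom(\Phi)$ being only $\Sigma^0_2$—is the crux, and I expect it to require the universal test machinery recorded in the definition of $\mlr_\mu^d$ so that the layers $\mlr^d$ can be tracked explicitly across $\Phi$.
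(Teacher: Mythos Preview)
The paper does not prove this theorem; it is stated as a known result attributed to Levin--Zvonkin and Shen. However, the paper later proves strictly more general versions (Theorems~\ref{thm-preservation-comp-sem} and~\ref{thm-ex-nihilo-comp-sem}, for computable semi-measures), and those proofs specialize to give the present statement, so that is the natural comparison.

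Your part~(i) is correct and is exactly the argument of Theorem~\ref{thm-preservation-comp-sem}.

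For part~(ii), your high-level plan matches the paper's: build a $\lambda_\Phi$-test capturing those $Y$ with no random preimage, and use the layers $\mlr^d$ of the universal $\lambda$-test. You also correctly isolate the obstacle: $\dom(\Phi)$ is only $\Sigma^0_2$, so the predicate ``$\tau$ has no $\Phi$-preimage in $\mlr^d$'' is not obviously $\Sigma^0_1$, and a naive pushforward of the universal test does not produce an effectively open set on the image side. What your plan lacks is the concrete device the paper uses to overcome this. Since $\lambda_\Phi$ is computable (here, because $\Phi$ is almost total), the measures $\lambda(\{X:|\Phi^X|\geq \ell\})$ are uniformly computable, so one can effectively find clopen sets $\llb D^e_\ell\rrb\subseteq\{X:|\Phi^X|\geq \ell\}$ with $\lambda$-error at most $2^{-e-\ell-1}$. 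Restricting $\Phi$ to $\bigcap_\ell\llb D^e_\ell\rrb$ yields a functional $\widehat\Phi_e$ whose domain is $\Pi^0_1$; now the predicate ``no preimage in $\mlr^e\cap\llb D^e_{|\tau|}\rrb$'' is $\Sigma^0_1$ by effective compactness, and the resulting sets $V_e$ form a $\lambda_\Phi$-test. For $Y$ random one picks $e$ with $Y\notin\llb V_e\rrb$, obtains preimages $X_\ell\in\mlr^e\cap\llb D^e_\ell\rrb$ of $Y\uh\ell$, and extracts a convergent subsequence whose limit is the desired random preimage. Your outline would stall without this $\Pi^0_1$-domain approximation step; that is the missing idea.
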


It will be helpful to introduce several other notions of algorithmic randomness for computable measures.  First, the definition of Martin-L\"of randomness can be straightforwardly relativized to an oracle (for details, see ~\cite{DowHir10} or ~\cite{Nie09}).  In particular, for each $n$, if we relativize Martin-L\"of randomness to $\emptyset^{(n)}$, the $n^{\mathrm{th}}$ jump of the empty set, this yields a notion known as $(n+1)$-randomness.  Another definition of randomness we consider is weak 2-randomness.

\begin{Definition}
Let $\mu$ be a computable measure.

\begin{itemize}
\item[(i)] A \emph{generalized $\mu$-Martin-L\"of test} is a sequence $(\mathcal{U}_i)_{i\in\omega}$ of uniformly $\Sigma^0_1$ subsets of $\cs$ such that
\[
\lim_{i\rightarrow\infty}\mu(\mathcal{U}_i)=0.
\]

\item[(ii)] $X\in\cs$ passes the generalized $\mu$-Martin-L\"of test $(\mathcal{U}_i)_{i\in\omega}$ if $X\notin\bigcap_{i \in \omega}\mathcal{U}_i$.

\medskip

\item[(iii)] $X\in\cs$ is \emph{$\mu$-weakly 2-random}, denoted $X\in\wtwor_\mu$, if $X$ passes every generalized $\mu$-Martin-L\"of test. When $\mu$ is the uniform measure $\lambda$, we often abbreviate $\wtwor_\mu$ by $\wtwor$.
\end{itemize}
\end{Definition}

As every $\mu$-Martin-L\"of test is a generalized $\mu$-Martin-L\"of test, it follows that $\wtwor_\mu\subseteq\mlr_\mu$.  In general, the converse does not hold, as shown by the following result.  Recall that $A,B\in\cs$ form a Turing minimal pair if for any $C\in\cs$, $C\leq_T A$ and $C\leq_T B$ implies that $C\equiv_T \emptyset$.

\begin{Theorem}\label{thm-w2r-minimal-pair}
Let $\mu$ be a computable measure on $\cs$.  If $X\in\cs$ is not computable, then $X$ is $\mu$-weakly 2-random if and only if $X$ is $\mu$-Martin-L\"of random and forms a Turing minimal pair with $\emptyset'$.
\end{Theorem}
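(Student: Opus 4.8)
The plan is to prove both directions through the standard reformulation of $\mu$-weak-2-randomness: $X\in\wtwor_\mu$ if and only if $X$ lies in no $\mu$-null $\Pi^0_2$ class. One direction of this reformulation is immediate, since the intersection $\bigcap_i\mathcal{U}_i$ of a generalized $\mu$-Martin-L\"of test is a $\Pi^0_2$ class of measure $\lim_i\mu(\mathcal{U}_i)=0$. For the other, given a $\mu$-null $\Pi^0_2$ class $\mathcal{P}=\bigcap_i\mathcal{U}_i$ with $(\mathcal{U}_i)$ uniformly $\Sigma^0_1$, I may assume the $\mathcal{U}_i$ decreasing (replacing $\mathcal{U}_i$ by $\bigcap_{j\le i}\mathcal{U}_j$), and then continuity of the finite measure $\mu$ gives $\mu(\mathcal{U}_i)\to\mu(\mathcal{P})=0$, so $(\mathcal{U}_i)$ is itself a generalized $\mu$-Martin-L\"of test covering $X$. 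I will also use that Sacks' theorem relativizes to an arbitrary computable measure $\mu$: if $\mu(\{Y:\Gamma^Y=C\})>0$ for a Turing functional $\Gamma$ and a set $C$, then $C$ is computable. This is the usual majority-vote argument---fix a $\mu$-density point $\rho$ of $\{Y:\Gamma^Y=C\}$ and compute $C(n)$ as the unique $b$ for which the lower-semicomputable conditional measure $\mu(\{Y\in\llb\rho\rrb:\Gamma^Y(n)\halts=b\})/\mu(\llb\rho\rrb)$ exceeds $1/2$---and it goes through because $\mu$ is computable on clopen sets and lower-semicomputable on $\Sigma^0_1$ classes.

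For the forward direction, suppose $X\in\wtwor_\mu$. Then $X\in\mlr_\mu$ because every $\mu$-Martin-L\"of test is a generalized one. To see that $X$ and $\emptyset'$ form a minimal pair, suppose $C\leq_T X$ and $C\leq_T\emptyset'$ with $C$ non-computable; I will contradict weak-2-randomness. Fix $\Gamma$ with $\Gamma^X=C$ and consider
\[
\mathcal{N}=\{Y\in\cs : \Gamma^Y \text{ is total and }\Gamma^Y=C\}.
\]
This class contains $X$, and it is $\Pi^0_2$: it is the intersection of the $\Pi^0_2$ class $\{Y:\Gamma^Y\text{ total}\}$ with $\{Y:\forall n\,(\Gamma^Y(n)\halts\rightarrow\Gamma^Y(n)=C(n))\}$, the latter being $\Pi^0_1(C)$ and hence, since $C\leq_T\emptyset'$ and $\Pi^0_1(\emptyset')=\Pi^0_2$, a $\Pi^0_2$ class. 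As $C$ is non-computable, the relativized Sacks theorem gives $\mu(\mathcal{N})=0$. Thus $\mathcal{N}$ is a $\mu$-null $\Pi^0_2$ class containing $X$, contradicting $X\in\wtwor_\mu$. Hence every such $C$ is computable, so $X$ and $\emptyset'$ form a minimal pair.

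For the converse, suppose $X$ is non-computable, $X\in\mlr_\mu$, and $X\notin\wtwor_\mu$; I aim to produce a non-computable $C$ with $C\leq_T X$ and $C\leq_T\emptyset'$, contradicting the minimal-pair hypothesis. By the reformulation, fix a decreasing generalized $\mu$-Martin-L\"of test $(\mathcal{U}_n)$ with $X\in\bigcap_n\mathcal{U}_n$ and $\mu(\mathcal{U}_n)\to 0$. Two observations organize the construction. First, the convergence is non-effective: the modulus $b(k)=$ least $n$ with $\mu(\mathcal{U}_n)\leq 2^{-k}$ satisfies $b\leq_T\emptyset'$ (the predicate $\mu(\mathcal{U}_n)\leq 2^{-k}$ is $\Pi^0_1$, as $\mu$ of a $\Sigma^0_1$ class is lower-semicomputable), yet $b$ is not computable, for otherwise $(\mathcal{U}_{b(k)})_k$ would be a genuine $\mu$-Martin-L\"of test covering $X$, contradicting $X\in\mlr_\mu$. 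Second, $X$ records, for each $n$, the least stage at which some prefix of $X$ is enumerated into the $\Sigma^0_1$ code of $\mathcal{U}_n$; this gives a function $\leq_T X$ which, by the same randomness argument applied to the clopen thinnings of the $\mathcal{U}_n$ at any would-be computable bound, is dominated by no computable function.

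The plan is then to weave these into a single set: run a $\emptyset'$-guided search that isolates, level by level, a clopen subclass of $\mathcal{U}_n$ of positive but measure-controlled size to which $X$ belongs, and code into $C$ a disagreement with the $e$-th partial computable function at the $e$-th such level. The Martin-L\"of randomness of $X$ ensures that $X$ falls in the generic part of each clopen piece, so that the coded bits are simultaneously recoverable from $X$ (which locates itself among the pieces) and from $\emptyset'$ (which tracks the measures and enumeration stages), while the diagonalization forces $C$ to be non-computable. This is the Downey--Nies--Weber--Yu construction, and I expect its execution to be the main obstacle: the difficulty is precisely the reconciliation of two complementary oracles---$X$ knows where it sits but not the measures, whereas $\emptyset'$ knows the measures but not $X$---into one set computable from each. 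Its adaptation to an arbitrary computable measure $\mu$ is routine, since the only facts used about $\mu$ are its computability on clopen sets and its lower semicomputability on $\Sigma^0_1$ classes.
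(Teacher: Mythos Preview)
The paper does not actually prove this theorem; it merely records that the argument is a straightforward generalization of the Lebesgue-measure case and points to the literature (Porter's thesis for the generalization, \cite{DowNieWeb06} and Hirschfeldt--Miller for the two directions). Your proposal follows exactly that route, so at the level of approach there is nothing to contrast.

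Your forward direction is complete and correct. For the converse you have the right ingredients---the non-computable $\emptyset'$-computable modulus of the generalized test, and the $X$-computable enumeration-stage function that escapes every computable bound---but, as you yourself flag, the priority construction that fuses them into a single non-computable $C\le_T X,\emptyset'$ is only gestured at, not carried out. One small correction: the construction you are describing is the Hirschfeldt--Miller argument, not Downey--Nies--Weber--Yu; the latter handled the other direction. Since the paper itself defers the details to references, your outline is already at the level the paper demands, but if you intend a self-contained write-up you will need to execute the actual finite-injury construction of the c.e.\ set $C$.
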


The proof of this theorem is a generalization of the proof of the result in the case that $\mu$ is the Lebesgue measure (see the proof of Theorem 2.69 of \cite{Por12} for details).  One direction of the original theorem was proved in \cite{DowNieWeb06}, while the other direction was proved by Hirschfeldt and Miller (unpublished; see Theorem 7.2.11 of \cite{DowHir10}).

\subsection{Randomness with respect to non-computable measures}\label{subsec-non-computable-measures}

Let $\mathscr{P}(\cs)$ be the collection of probability measures on $\cs$.  It can be equipped with a natural topology, the so-called weak topology. The set $\mathcal{B}$ of subsets of $\mathscr{P}(\cs)$ of type
\[
\left\{\mu : \bigwedge_{i=1}^n [\ell_i < \mu(\sigma_i) < r_i] \right\}
\]
where the $\sigma_i$ are strings and the $\ell_i, r_i$ are rational numbers, form a base for this topology (note that such sets can be encoded by an integer, and we call $B_i$ the set of code~$i$). 

We will consider two general approaches to defining randomness for a non-computable measure $\mu\in\mathscr{P}(\cs)$, depending on whether our test has access to the measure as an oracle.  If we allow our test to have access to the measure as an oracle, we first need to code it as an infinite binary sequence. For this we fix a surjective partial map $\Theta:\cs\rightarrow\mathscr{P}(\cs)$, defined on a $\Pi^0_1$-subset of $\cs$, which must have the following property: from every enumeration of $X \in \dom(\Theta)$ (seen as a subset of $\omega$), one can uniformly enumerate the $B_i$'s containing $\mu$, and from any enumeration of the $B_i$'s containing~$\mu$ one can uniformly enumerate some pre-image of $\mu$ by $\Theta$. We say that an enumeration of~$X$ is a \emph{representation} of $\Theta(X)$. \label{page:representations} There are a number of equivalent ways to carry this out (see \cite{Rei08} or \cite{DayMil13}), the easiest one being to define $\Theta(X)$ to be the measure (if it exists and is unique) contained in $B_i$ for each $i \in X$.

The important caveat is that there are measures such that, among all their representations, there is none of smallest Turing degree (this follows for example from the existence of a neutral measure, as shown by Levin \cite{MR0429327}), a phenomenon which occurs no matter what particular representation is chosen. Therefore, there is no canonical way to represent a measure by a unique member of $\cs$, and subsequently, in any definition where one wants to treat $\mu$ as an oracle, one needs to quantify over representations of~$\mu$.

\begin{Definition}
Let $\mu$ be a measure on $\cs$, and let $R$ be a representation of $\mu$.

\begin{itemize}
\item[(i)] An \emph{$R$-Martin-L\"of test} is a sequence $(\mathcal{U}_i)_{i\in\omega}$ of uniformly $\Sigma^0_1(R)$ subsets of $\cs$ such that for each $i$,
\[
\mu(\mathcal{U}_i)\leq 2^{-i}.
\]
\item[(ii)] $X\in\cs$ passes the $R$-Martin-L\"of test $(\mathcal{U}_i)_{i\in\omega}$ if $X\notin\bigcap_{i \in \omega}\mathcal{U}_i$.
\item[(iii)] $X\in\cs$ is \emph{$R$-Martin-L\"of random}, denoted $X\in\mlr_\mu^R$, if $X$ passes every $R$-Martin-L\"of test.
\item[(iv)] $X\in\cs$ is \emph{$\mu$-Martin-L\"of random}, denoted $X\in\mlr_\mu$, if there is some representation $R$ of $\mu$ such that $X$ is $R$-Martin-L\"of random.
\end{itemize}
\end{Definition}

An alternative approach to defining randomness with respect to a non-computable measure dispenses with the representations, resulting in what is known as \emph{blind randomness} (or \emph{Hippocratic randomness}, as it was called by Kjos-Hanssen in \cite{Kjo10}, where the definition first appeared).

\begin{Definition}
Let $\mu$ be a measure on $\cs$.

\begin{itemize}
\item[(i)] A \emph{blind $\mu$-Martin-L\"of test} is a sequence $(\mathcal{U}_i)_{i\in\omega}$ of uniformly $\Sigma^0_1$ (i.e.\ effectively open) subsets of $\cs$ such that for each $i$,
\[
\mu(\mathcal{U}_i)\leq 2^{-i}.
\]
\item[(ii)] $X\in\cs$ passes the blind $\mu$-Martin-L\"of test $(\mathcal{U}_i)_{i\in\omega}$ if $X\notin\bigcap_{i \in \omega}\mathcal{U}_i$.
\item[(iii)] $X\in\cs$ is \emph{blind $\mu$-Martin-L\"of random}, denoted $X\in\bmlr_\mu$, if $X$ passes every blind $\mu$-Martin-L\"of test.
\end{itemize}
\end{Definition}

\subsection{Some basic facts about left-c.e.\ semi-measures} 

Recall from the introduction that a semi-measure $\rho:\str\rightarrow[0,1]$ satisfies
\begin{itemize}
\item[(i)] $\rho(\emptystr) = 1$ and 
\item[(ii)] $\rho(\sigma)\geq\rho(\sigma0)+\rho(\sigma1)$.
\end{itemize}  
Henceforth, we will restrict our attention to the class of left-c.e.\ semi-measures, where a semi-measure $\rho$ is \emph{left-c.e.}\ if, uniformly in $\sigma$, there is a computable function $\tilde \rho : \str \times \omega \rightarrow \binrat$, non-decreasing in its first argument, and such that for all~$\sigma$:
\[
\lim_{i \rightarrow +\infty} \tilde \rho(\sigma,i) = \rho(\sigma)
\]
That is, the values of $\rho$ on basic open sets are uniformly approximable from below.  

Just as computable measures are precisely the measures that are induced by almost total Turing functionals, left-c.e.\ semi-measures are precisely the semi-measures that are induced by Turing functionals:

\begin{Theorem}[Levin, Zvonkin \cite{ZvoLev70}]\label{thm-MachinesInduceSemiMeasures}{\ }
\begin{itemize}
\item[(i)] For every Turing functional $\Phi$, the function $\lambda_\Phi(\sigma)=\lambda(\llb\Phi^{-1}(\sigma)\rrb)
=\lambda(\{X:\Phi^X\succeq\sigma\})$ is a left-c.e.\ semi-measure, and  
\item[(ii)] for every left-c.e.\ semi-measure $\rho$, there is a Turing functional $\Phi$ such that $\rho=\lambda_\Phi$. 
\end{itemize}
\end{Theorem}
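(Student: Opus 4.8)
The plan is to prove the two directions separately, as they are of quite different character. For part (i), the task is to verify that for an arbitrary Turing functional $\Phi$, the function $\lambda_\Phi(\sigma)=\lambda(\llb\Phi^{-1}(\sigma)\rrb)$ is a left-c.e.\ semi-measure. First I would check the semi-measure axioms. The normalization $\lambda_\Phi(\emptystr)=1$ follows because $\Phi^X\succeq\emptystr$ holds for every $X$, so $\llb\Phi^{-1}(\emptystr)\rrb=\cs$. For super-additivity, I would argue that the sets $\{X:\Phi^X\succeq\sigma0\}$ and $\{X:\Phi^X\succeq\sigma1\}$ are disjoint (an output sequence cannot simultaneously extend $\sigma0$ and $\sigma1$) and both are contained in $\{X:\Phi^X\succeq\sigma\}$; additivity of $\lambda$ on disjoint sets then yields $\lambda_\Phi(\sigma)\geq\lambda_\Phi(\sigma0)+\lambda_\Phi(\sigma1)$. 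For the left-c.e.\ part, I would exploit the identity noted in the excerpt, $\Phi^{-1}(\llb\sigma\rrb)=\llb\Phi^{-1}(\sigma)\rrb\cap\dom(\Phi)$, together with the fact that $\Phi^{-1}(\sigma)$ is a c.e.\ set of strings. Enumerating $\Phi^{-1}(\sigma)$ in stages gives an increasing sequence of clopen sets whose Lebesgue measures are dyadic rationals computable uniformly in $\sigma$ and the stage, and these approximate $\lambda_\Phi(\sigma)$ from below; this furnishes the required $\tilde\rho(\sigma,i)$.

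For part (ii), given a left-c.e.\ semi-measure $\rho$ with lower approximation $\tilde\rho(\sigma,i)$, the goal is to build a Turing functional $\Phi$ with $\lambda_\Phi=\rho$. The natural strategy is a stage-by-stage construction that allocates subintervals of $[0,1]$ (equivalently, cylinders in the domain side) to output strings so that the total Lebesgue measure mapped into $\llb\sigma\rrb$ converges to $\rho(\sigma)$. Concretely, at each stage I would look at the increments $\tilde\rho(\sigma,i+1)-\tilde\rho(\sigma,i)$ and reserve fresh portions of the input space, of exactly the corresponding measure, on which $\Phi$ outputs (an extension of) $\sigma$. The super-additivity of $\rho$ is precisely what guarantees there is always enough unreserved measure below $\sigma$ to accommodate the demands of $\sigma0$ and $\sigma1$, so the allocation never runs out of room and the reservations for incompatible strings can be kept disjoint. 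One must take care to lay out the reserved cylinders consistently along each branch so that the resulting set of pairs $(\sigma,\tau)$ genuinely satisfies the prefix-compatibility condition defining a Turing functional.

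I expect the main obstacle to be the bookkeeping in part (ii): organizing the reserved inputs so that (a) the functional condition (monotonicity of outputs along extensions of inputs) is respected, (b) the measures add up exactly to $\rho(\sigma)$ in the limit rather than merely bounding it, and (c) the allocation for $\sigma0$ and $\sigma1$ remains within, and disjoint inside, the allocation already committed to $\sigma$. The cleanest way to handle this is to think of $\rho$ as assigning, in the limit, a measure $\rho(\sigma)$ to $\llb\sigma\rrb$, and to realize these demands by carving out half-open dyadic subintervals of $[0,1]$ in a nested fashion mirroring the tree $\str$; the gap $\rho(\sigma)-\rho(\sigma0)-\rho(\sigma1)\geq 0$ is simply the measure of inputs on which $\Phi$ halts with output exactly $\sigma$ (a finite string) and goes no further. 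The remaining verifications that $\Phi$ is a legitimate c.e.\ functional and that $\lambda_\Phi$ recovers $\rho$ are then routine.
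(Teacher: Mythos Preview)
The paper does not actually give a proof of this theorem; it is stated as a known result attributed to Levin and Zvonkin \cite{ZvoLev70} and used without further justification. So there is no ``paper's proof'' to compare against.

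That said, your outline is the standard argument and is correct in its essentials. Part~(i) is exactly as you describe: $\llb\Phi^{-1}(\emptystr)\rrb=\cs$, the sets $\{X:\Phi^X\succeq\sigma0\}$ and $\{X:\Phi^X\succeq\sigma1\}$ are disjoint subsets of $\{X:\Phi^X\succeq\sigma\}$, and $\Phi^{-1}(\sigma)$ is a c.e.\ set of strings so that $\lambda(\llb\Phi^{-1}(\sigma)\rrb)$ is left-c.e.\ uniformly in~$\sigma$. (Your invocation of the identity $\Phi^{-1}(\llb\sigma\rrb)=\llb\Phi^{-1}(\sigma)\rrb\cap\dom(\Phi)$ is a slight detour; you only need that $\llb\Phi^{-1}(\sigma)\rrb$ is effectively open.) For part~(ii), the interval-allocation picture you describe is precisely the classical construction: one may assume without loss of generality that the stage-$s$ approximations $\tilde\rho(\cdot,s)$ are themselves dyadic-rational-valued semi-measures increasing to~$\rho$, and then the nested half-open dyadic intervals can be carved out exactly as you say, with the gap $\rho(\sigma)-\rho(\sigma0)-\rho(\sigma1)$ absorbed by inputs on which $\Phi$ outputs only~$\sigma$. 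The bookkeeping issues you flag in (a)--(c) are real but, as you note, routine once the intervals are laid out in a tree-consistent way.
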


Another significant fact about left-c.e.\ semi-measures is the existence of a \emph{universal} left-c.e.\ semi-measure:  there exists a left-c.e.\ semi-measure $M$ such that, for every left-c.e.\ semi-measure~$\rho$, there exists a $c\in\omega$ such that $\rho\leq c\cdot M$.  One way to obtain a universal left-c.e.\ semi-measure is to effectively list all left-c.e.\ semi-measures $(\rho_e)_{e \in \omega}$ (which can be obtained from an effective list of all Turing functionals by appealing to Theorem~\ref{thm-MachinesInduceSemiMeasures}) and set $M = \sum_{e \in \omega} 2^{-e-1} \rho_e$.  Alternatively, one can induce it by means of a universal Turing functional.  Let $(\Phi_i)_{i\in\omega}$ be an effective enumeration of all Turing functionals.  Then the functional $\widehat\Phi$ such that
\[
\widehat\Phi(1^e0X)=\Phi_e(X)
\]
for every $e\in\omega$ and $X\in\cs$ is a universal Turing functional and we can set $M=\lambda_{\widehat\Phi}$. Then $M$ is an universal left-c.e.\ semi-measure, since for any left-c.e.\ semi-measure $\rho$, there is some $\Phi_e$ such that $\rho=\lambda_{\Phi_e}$, and thus by definition of $\widehat\Phi$, we have $\lambda_{\Phi_e} \leq 2^{e+1}\cdot\lambda_{\widehat\Phi}$.

\section{Shen's Question for Computable semi-measures}\label{sec-computable-semi-measures}

In this section, we provide a positive answer to Shen's question for the case of computable measures.  That is, we prove:

\begin{Theorem}\label{thm-shen-question-computable}
If $\Phi$ and $\Psi$ are Turing functionals such that $\lambda_\Phi=\lambda_\Psi$ and $\lambda_\Phi$ is computable, then $\Phi(\mlr)=\Psi(\mlr)$.  
\end{Theorem}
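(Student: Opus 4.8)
\emph{Plan.} The strategy is to isolate an intrinsic notion of Martin-L\"of randomness $\mlr_\rho$ attached to the computable semi-measure $\rho:=\lambda_\Phi=\lambda_\Psi$, and then to prove that for \emph{any} functional $\Theta$ with $\lambda_\Theta=\rho$ one has $\Theta(\mlr)=\mlr_\rho$; applying this to $\Theta=\Phi$ and $\Theta=\Psi$ immediately yields $\Phi(\mlr)=\mlr_\rho=\Psi(\mlr)$. First I would declare a \emph{$\rho$-test} to be a uniformly $\Sigma^0_1$ sequence $(\mathcal U_i)_{i\in\omega}$ with $\rho(\mathcal U_i)\le 2^{-i}$, where $\rho$ is extended to open sets by $\rho(\llb W\rrb)=\sum_{\sigma\in W}\rho(\sigma)$ for prefix-free $W$, and set $\mlr_\rho$ to consist of the sequences lying outside $\bigcap_i\mathcal U_i$ for every such test. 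The one computation that makes everything fit is the identity $\lambda(\Phi^{-1}(\llb\tau\rrb))=\rho(\tau)$, which is just the definition of $\lambda_\Phi$; since for prefix-free $\tau,\tau'$ the sets $\{X:\Phi^X\succeq\tau\}$ and $\{X:\Phi^X\succeq\tau'\}$ are disjoint, this upgrades to $\lambda(\Phi^{-1}(\mathcal U))=\rho(\mathcal U)$ for every open $\mathcal U$.

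Second, I would prove $\Phi(\mlr)\subseteq\mlr_\rho$ by pulling tests back. If $X\in\mlr\cap\dom(\Phi)$ but $Y:=\Phi(X)$ fails a $\rho$-test $(\mathcal U_i)_{i\in\omega}$, then $(\Phi^{-1}(\mathcal U_i))_{i\in\omega}$ is uniformly $\Sigma^0_1$ with $\lambda(\Phi^{-1}(\mathcal U_i))=\rho(\mathcal U_i)\le 2^{-i}$ by the identity above, hence a $\lambda$-Martin-L\"of test; and $X\in\bigcap_i\Phi^{-1}(\mathcal U_i)$ since $Y\in\bigcap_i\mathcal U_i$, contradicting $X\in\mlr$. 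This direction uses neither almost totality nor computability of $\rho$.

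The reverse inclusion $\mlr_\rho\subseteq\Phi(\mlr)$ is the crux, and is exactly a \emph{No Randomness Ex Nihilo} statement for a functional that need not be almost total. Fix $Y\in\mlr_\rho$ and write $q(Y)=\lim_n\rho(Y\uh n)=\lambda(\Phi^{-1}(\{Y\}))$. If $Y$ is an atom of $\rho$, i.e. $q(Y)>0$, then $\Phi^{-1}(\{Y\})$ has positive $\lambda$-measure, and since the non-$\mlr$ sequences are $\lambda$-null it must contain a Martin-L\"of random $X$, which is the desired preimage. The genuine difficulty is the non-atomic case $q(Y)=0$, where the fiber $\Phi^{-1}(\{Y\})$ is $\lambda$-null: here a crude test built from the ratios $\lambda(\Phi^{-1}(\llb Y\uh n\rrb)\setminus\hat{\mathcal U}_k)/\rho(Y\uh n)$ fails, because at each finite level most of the mass of $\Phi^{-1}(\llb Y\uh n\rrb)$ belongs to random preimages of \emph{neighbouring} outputs $Y'\neq Y$, so the non-randomness of the exact fiber is invisible at any finite stage. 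To circumvent this I would reduce to the classical principle, Theorem~\ref{thm:pres-ex-nihilo}(ii): using that $\rho$ is computable, complete $\Phi$ to an \emph{almost total} functional $\hat\Phi$ inducing a computable \emph{measure} $\hat\mu$ on a tagged copy of the output, arranged so that $\hat\mu(0\smf\sigma)=\rho(\sigma)$ for all $\sigma$, with the defect mass siphoned into the $1$-branch. Then $0\smf Y\in\mlr_{\hat\mu}$, because a $\hat\mu$-test concentrated on the $0$-branch is literally a $\rho$-test; the classical principle supplies $X\in\mlr$ with $\hat\Phi(X)=0\smf Y$; and by construction this $X$ satisfies $\Phi(X)=Y$.

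The step I expect to be the main obstacle is precisely this completion: totalising $\Phi$ while (a) keeping the induced measure $\hat\mu$ \emph{computable} and (b) literally re-using $\Phi$ on the $0$-branch, so that a $\hat\Phi$-preimage of $0\smf Y$ is a $\Phi$-preimage of $Y$. The tension is that one cannot decide when $\Phi^X$ stops growing, so any faithful copy of $\Phi$'s output stalls exactly on the defect, and naive padding schemes reintroduce uncomputable halting-time information into $\hat\mu$. Resolving this --- e.g. by driving the redirection of the defect with the computable values $\rho(\sigma)-\rho(\sigma0)-\rho(\sigma1)$ together with fresh input randomness, and then recovering randomness of the extracted preimage by a van Lambalgen--style argument --- is where the computability of $\rho$ is essential and where the real work lies. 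Once the completion is in place, the two inclusions give $\Phi(\mlr)=\mlr_\rho=\Psi(\mlr)$, as required.
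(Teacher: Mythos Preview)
Your overall architecture---define $\mlr_\rho$ intrinsically, prove preservation and a No Randomness Ex Nihilo (NREN) statement, then apply both to $\Phi$ and $\Psi$---is exactly the paper's. The preservation direction is correct and matches the paper's argument. One minor point: extending $\rho$ to open sets by $\rho(\llb W\rrb)=\sum_{\sigma\in W}\rho(\sigma)$ is not well-defined, since two prefix-free $W$ with the same $\llb W\rrb$ can give different sums when $\rho$ is a genuine semi-measure; the paper handles this by phrasing tests as uniformly c.e.\ sets of \emph{strings} rather than as $\Sigma^0_1$ classes. This is easy to fix and does not affect your argument.

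The gap is in the NREN half. Your completion scheme cannot work as stated: the condition $\hat\mu(0\smf\sigma)=\rho(\sigma)$ forces $\hat\mu(0)=\rho(\emptystr)=1$, so there is no mass left for the $1$-branch and the defect has nowhere to go. Even after rescaling to $\hat\mu(0\smf\sigma)=c\rho(\sigma)$, requirement (b)---that a $\hat\Phi$-preimage of $0\smf Y$ be a $\Phi$-preimage of $Y$---is incompatible with almost-totality: once $\hat\Phi$ outputs the leading~$0$ it is committed to reproducing $\Phi^X$ on the remaining bits, hence diverges on exactly the inputs where $\Phi$ does. Your proposed fix, driving the redirection with the computable defects $g(\sigma)$ together with \emph{fresh} input randomness, does not close this: the decision to redirect must be correlated with the $\Pi^0_2$ event ``$\Phi^X$ eventually extends past $\sigma$'', and independent randomness cannot supply that correlation. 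Any scheme that bases the redirection on finite-stage behaviour of $\Phi$ alone will either fail almost-totality or break condition~(b).

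The paper's NREN argument avoids completion altogether. The key idea you are missing is that computability of $\rho$ makes $\lambda(\MP S_\ell)=\sum_{|\sigma|=\ell}\rho(\sigma)$ computable, where $\MP S_\ell=\{X:|\Phi^X|\ge\ell\}$; hence one can effectively find clopen $D^e_\ell\subseteq\MP S_\ell$ with $\lambda(\MP S_\ell\setminus D^e_\ell)\le 2^{-e-\ell-1}$. Restricting $\Phi$ to $\bigcap_\ell\llb D^e_\ell\rrb$ yields functionals $\widehat\Phi_e$ with $\Pi^0_1$ domains approximating $\dom(\Phi)$ to within $2^{-e}$. One then builds a $\rho$-Martin-L\"of test $(V_e)_e$ directly: $V_e$ collects those $\tau$ such that no $X\in\mlr^e\cap\llb D^e_{|\tau|}\rrb$ has $\Phi^X\succeq\tau$; a short computation gives $\rho(V_e)\le 2^{-e+1}$. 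If $Y\in\mlr_\rho$ then $Y\notin\llb V_e\rrb$ for some $e$, and a compactness argument in the closed set $\mlr^e\cap\dom(\widehat\Phi_e)$ produces the desired random preimage. This clopen-approximation step is where computability of $\rho$ is actually used, and it has no analogue in your proposal.
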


To prove Theorem \ref{thm-shen-question-computable}, we extend the definition of Martin-L\"of randomness with respect to computable measures to a definition of Martin-L\"of randomness with respect to computable semi-measures.

The definition of a computable semi-measure is just a slight modification of the definition of a computable measure:  a semi-measure $\rho$ is \emph{computable} if there is a computable function $\tilde\rho:\str\times\omega\rightarrow\binrat$ such that
\[
|\rho(\sigma)-\tilde\rho(\sigma,i)|\leq 2^{-i}
\]
for every $\sigma\in\str$ and $i\in\omega$ .

To define Martin-L\"of randomness with respect to a computable semi-measure, we 
have to exercise some caution.  In general, for a given $\Sigma^0_1$ class $\MP U$, $\rho(\MP U)$ is not well-defined, as there may exist prefix-free sets $E_0, E_1 \subseteq 2^{<\omega}$ such that $\llb E_0\rrb = \llb E_1\rrb = \MP U$ but $\rho(E_0) \neq \rho(E_1)$, if one defines $\rho(E)=\sum_{\sigma\in E}\rho(\sigma)$ for $E\subseteq\str$.

To remedy this problem, we will only apply semi-measures to c.e.\ subsets of $\str$ rather than to effectively open subsets of $\cs$.  Moreover, since any c.e.\ set $E \subseteq 2^{<\omega}$ may be replaced with a prefix-free c.e.\ set $F \subseteq 2^{<\omega}$ such that $\llb F\rrb = \llb E\rrb$ and $\rho(F) \leq \rho(E)$ for any semi-measure $\rho$, we can always assume that a given c.e.\ subset of $\str$ is prefix-free.  This replacement can be done uniformly, so whenever we need to consider a uniformly c.e.\ sequence $(E_i)_ {i\in\omega}$ of subsets of $2^{<\omega}$, we may assume that the sets $E_i$ are all prefix-free.

\begin{Definition}
Let $\rho$ be a computable semi-measure.

\begin{itemize}
\item[(i)] A \emph{$\rho$-Martin-L\"of test} is a uniformly c.e.\ sequence $(U_i)_{i\in\omega}$ of subsets of $\str$ such that 
\[
\rho(U_i)\leq 2^{-i}.
\]
for each $i\in\omega$.
\item[(ii)] $X\in\cs$ passes the $\rho$-Martin-L\"of test $(U_i)_{i\in\omega}$ if $X\notin\bigcap_{i \in \omega}\llb U_i\rrb$.


\item[(iii)] $X\in\cs$ is \emph{$\rho$-Martin-L\"of random}, denoted $X\in\mlr_\rho$, if $X$ passes every $\rho$-Martin-L\"of test.
\end{itemize}
\end{Definition}

We now verify that randomness with respect to a \emph{computable} semi-measure satisfies both randomness preservation and the No Randomness Ex Nihilo principle.  Whereas the proof of randomness preservation for computable measures is essentially the same as the standard proof of randomness preservation for computable measures, the proof of the No Randomness Ex Nihilo principle for computable semi-measures is considerably more delicate than the original proof.

\begin{Theorem}[Randomness preservation for computable semi-measures]\label{thm-preservation-comp-sem}
If $\Phi$ is a Turing functional that induces a computable semi-measure $\rho$, then $X\in\mlr\cap\dom(\Phi)$ implies $\Phi(X)\in\mlr_\rho$.
\end{Theorem}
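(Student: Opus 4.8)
The plan is to prove the contrapositive in spirit: I will show that any $\rho$-Martin-L\"of test that would witness the non-randomness of $\Phi(X)$ can be pulled back along $\Phi$ to a $\lambda$-Martin-L\"of test (an ordinary Martin-L\"of test) witnessing the non-randomness of $X$. So suppose $\Phi(X) \notin \mlr_\rho$, as witnessed by a $\rho$-Martin-L\"of test $(U_i)_{i\in\omega}$; by the remarks preceding the definition I may assume each $U_i \subseteq \str$ is a prefix-free c.e.\ set with $\rho(U_i) \le 2^{-i}$ and $\Phi(X) \in \bigcap_i \llb U_i \rrb$. The goal is to produce a genuine Martin-L\"of test $(\MP{V}_i)_{i\in\omega}$ with $X \in \bigcap_i \MP{V}_i$.

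First I would set $\MP{V}_i = \llb \Phi^{-1}(U_i) \rrb$, using the notation for preimages fixed in the preliminaries. Since $\Phi$ is a c.e.\ set of pairs and each $U_i$ is uniformly c.e., the preimage sets $\Phi^{-1}(U_i)$ are uniformly c.e.\ subsets of $\str$, so $(\MP{V}_i)_{i\in\omega}$ is uniformly effectively open. Next I would verify that $X \in \bigcap_i \MP{V}_i$: because $\Phi(X) \in \llb U_i \rrb$, some $\tau \in U_i$ is an initial segment of $\Phi(X) = \Phi^X$, which by definition of $\Phi^X$ means $\Phi^{X \uh n} \succeq \tau$ for some $n$, so $X \uh n \in \Phi^{-1}(\tau) \subseteq \Phi^{-1}(U_i)$ and hence $X \in \MP{V}_i$. (Here I am using $X \in \dom(\Phi)$, which is part of the hypothesis $X \in \mlr \cap \dom(\Phi)$.)

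The measure bound is the one genuinely substantive step. I want $\lambda(\MP{V}_i) \le 2^{-i}$. Using the identity $\Phi^{-1}(\llb\tau\rrb) = \llb\Phi^{-1}(\tau)\rrb \cap \dom(\Phi)$ from the preliminaries together with the definition $\lambda_\Phi(\tau) = \lambda(\llb\Phi^{-1}(\tau)\rrb)$, and the fact that $\rho = \lambda_\Phi$, I compute
\[
\lambda(\MP{V}_i) = \lambda\Bigl(\bigcup_{\tau \in U_i} \llb\Phi^{-1}(\tau)\rrb\Bigr) \le \sum_{\tau \in U_i} \lambda(\llb\Phi^{-1}(\tau)\rrb) = \sum_{\tau \in U_i} \lambda_\Phi(\tau) = \rho(U_i) \le 2^{-i}.
\]
The inequality is subadditivity of $\lambda$ over the union, and this is exactly the place where working with a prefix-free $U_i$ is reassuring, though countable subadditivity holds regardless. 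Thus $(\MP{V}_i)_{i\in\omega}$ is a bona fide Martin-L\"of test that $X$ fails, contradicting $X \in \mlr$; therefore $\Phi(X) \in \mlr_\rho$.

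I expect the main subtlety to be purely notational rather than conceptual: carefully tracking the distinction between the c.e.\ set of strings $\Phi^{-1}(U_i)$ and the open class $\MP{V}_i = \llb \Phi^{-1}(U_i)\rrb$ it generates, and making sure the pullback of a \emph{semi-measure} bound becomes an honest \emph{measure} bound via the identity $\rho = \lambda_\Phi$. Since $\Phi$ need not be almost total here, I must not assume $\lambda(\dom(\Phi)) = 1$, but this causes no trouble because I only ever need the upper bound on $\lambda(\MP{V}_i)$, and intersecting with $\dom(\Phi)$ only decreases measure. This is precisely why, as the authors note, preservation of randomness is the easy direction; the delicate work is reserved for the No Randomness Ex Nihilo converse.
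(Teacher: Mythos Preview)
Your proof is correct and is essentially identical to the paper's own argument: both take the contrapositive, pull back the $\rho$-test along $\Phi$ by setting $\MP{V}_i=\bigcup_{\tau\in U_i}\llb\Phi^{-1}(\tau)\rrb$, bound $\lambda(\MP{V}_i)$ via subadditivity and the identity $\rho=\lambda_\Phi$, and verify $X\in\MP{V}_i$ from $\Phi(X)\in\llb U_i\rrb$. Your closing observation that computability of $\rho$ is not actually used here is also correct---the paper later restates this same argument (Theorem~\ref{thm-rand-pres-1}) for arbitrary left-c.e.\ semi-measures.
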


\begin{proof}
Suppose that we have $X\in\dom(\Phi)$ such that $\Phi(X)\notin\mlr_\rho$.  Then there is a $\rho$-Martin-L\"of test $(U_i)_{i\in\omega}$ such that $\Phi(X)\in\bigcap_{i \in \omega}\llb U_i\rrb$.  We define
\[
\mathcal{V}_i=\bigcup_{\tau\in U_i}\llb\Phi^{-1}(\tau)\rrb.
\]
Clearly, the collection $(\mathcal{V}_i)_{i\in\omega}$ is uniformly $\Sigma^0_1$.  Then 
\[
\lambda(\mathcal{V}_i)\leq\sum_{\tau\in U_i}\lambda(\llb\Phi^{-1}(\tau)\rrb)=\sum_{\tau\in U_i}\rho(\tau)=\rho(U_i)\leq 2^{-i},
\]
so $(\mathcal{V}_i)_{i\in\omega}$ is a Martin-L\"of test.  Lastly, $\Phi(X)\in \llb U_i\rrb$ for each $i\in\omega$, thus for each $i \in \omega$ there is some $\tau\preceq\Phi(X)$ such that $\tau\in U_i$. This implies that $X\in\llb\Phi^{-1}(\tau)\rrb$, and so we have $X\in\mathcal{V}_i$.  Thus, $X\notin\mlr$.
\end{proof}

\begin{Theorem}[No Randomness Ex Nihilo principle for computable semi-measures]\label{thm-ex-nihilo-comp-sem}
Let $\Phi$ be a Turing functional that induces a computable semi-measure $\rho$.  If $Y\in\mlr_\rho$, then there is some $X\in\mlr$ such that $\Phi(X)=Y$.
\end{Theorem}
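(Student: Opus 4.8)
The plan is to reduce the semi-measure statement to the already-available measure statement, Theorem~\ref{thm:pres-ex-nihilo}(ii), by completing $\Phi$ to an almost total functional. First I would dispose of the easy case. Writing $G_\tau=\llb\Phi^{-1}(\tau)\rrb$, we have $\rho(\tau)=\lambda(G_\tau)$, and since the $G_{Y\uh n}$ are nested with $\bigcap_n G_{Y\uh n}=\Phi^{-1}(\{Y\})$, the limit $\rho(\{Y\}):=\lim_n\rho(Y\uh n)$ equals $\lambda(\Phi^{-1}(\{Y\}))$. If this is positive, then $\Phi^{-1}(\{Y\})$ is a set of positive $\lambda$-measure; since $\cs\setminus\mlr$ is $\lambda$-null, it must meet $\mlr$, and any $X\in\Phi^{-1}(\{Y\})\cap\mlr$ is the desired random preimage (automatically $X\in\dom(\Phi)$ with $\Phi(X)=Y$). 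So the whole difficulty lies in the non-atomic case $\rho(\{Y\})=0$. Here I would first record that $Y$ must be non-computable: a computable $Y$ with $\rho(\{Y\})=0$ is caught by the singleton test $U_i=\{Y\uh{n_i}\}$, where $n_i$ is computed (using that $Y$ and $\rho$ are computable) so that $\rho(Y\uh{n_i})\le 2^{-i}$, contradicting $Y\in\mlr_\rho$ (compare Lemma~\ref{lem-kautz}). In particular $Y$ is not eventually constant.

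Next I would build an almost total functional $\Psi$ that simulates $\Phi$ but reroutes the \qt{missing} measure to a default output. At the level of semi-measures this amounts to redistributing the defect $\rho(\sigma)-\rho(\sigma0)-\rho(\sigma1)\ge 0$ down a fixed branch (say, continuing with $0$'s), producing a computable \emph{measure} $\br\rho\ge\rho$: set $\br\rho(\emptystr)=1$ and, recursively, $\br\rho(\sigma1)=\rho(\sigma1)$ and $\br\rho(\sigma0)=\br\rho(\sigma)-\rho(\sigma1)$, so that $\br\rho(\sigma0)+\br\rho(\sigma1)=\br\rho(\sigma)$ and $\br\rho\ge\rho$ on every cylinder. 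Realizing $\br\rho$ by an almost total $\Psi$ that extends the outputs of $\Phi$, with all \qt{diverted} inputs producing eventually-constant (hence computable) outputs, then finishes the proof: since $\br\rho\ge\rho$ pointwise, every $\br\rho$-Martin-L\"of test is a $\rho$-Martin-L\"of test, so $\mlr_\rho\subseteq\mlr_{\br\rho}=\mlr_{\lambda_\Psi}$ and thus $Y\in\mlr_{\lambda_\Psi}$. Applying Theorem~\ref{thm:pres-ex-nihilo}(ii) to the almost total $\Psi$ yields $X\in\mlr$ with $\Psi(X)=Y$; and because $Y$ is non-computable while every diverted computation outputs an eventually-constant sequence, $X$ cannot be a diverted input, so $\Psi(X)=\Phi(X)=Y$, as required.

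The main obstacle is the construction of $\Psi$. Unlike an honest almost total functional, $\Phi$ may stall without ever signalling that it will produce no further output, so $\Psi$ cannot decide when to divert; the rerouting must instead be driven by a stage-wise timeout scheme that gradually hands stalled input measure over to the default branch. The delicate point is to run this scheme so that the induced measure $\lambda_\Psi$ is exactly the computable measure $\br\rho$ (equivalently, so that $\lambda_\Psi$ is computable and dominates $\rho$) while keeping $\Psi$ total and ensuring that no diverted input ever reproduces the non-computable target $Y$. This is precisely the \qt{considerably more delicate} step flagged before the statement. I expect a direct construction of a $\rho$-Martin-L\"of test from the universal test $(\hat{\MP U}_e)_{e\in\omega}$ on the domain to be even harder in the non-atomic case, since the non-random preimage mass can \qt{escape} across scales: no single $G_{Y\uh n}$ need lie inside any fixed $\hat{\MP U}_e$, and a constant fraction of $G_{Y\uh n}$ may consist of random inputs that only later diverge from $Y$, defeating the naive \qt{mostly non-random} covering.
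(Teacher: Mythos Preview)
Your reduction-to-measures plan is natural, but it stalls at precisely the step you flag as the ``main obstacle'': constructing an almost total $\Psi$ with $\lambda_\Psi\ge\rho$ whose \emph{diverted} outputs are all eventually constant and whose \emph{non-diverted} outputs agree with $\Phi$ on the same input. These requirements are in tension. If diversion means ``output zeros from now on,'' then for any $\tau$ ending in $1$ one has $\{X:\Psi^X\succeq\tau\}\subseteq\{X:\Phi^X\succeq\tau\}$, hence $\lambda_\Psi(\tau)\le\rho(\tau)$; together with $\lambda_\Psi\ge\rho$ this forces equality, i.e.\ $\Psi$ must \emph{never} divert an input that will eventually extend its $\Phi$-output past $\tau$. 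But $\Psi$ can only decide to divert on a clopen set of inputs, whereas ``$\Phi$ will never output another bit'' is a $\Pi^0_1$ condition on $X$; unless the stall sets $\{X:|\Phi^X|\le k\}$ happen to have nonempty interior, $\Psi$ cannot divert at all and collapses back to $\Phi$, which is not almost total. Allowing diversion to either $0^\omega$ or $1^\omega$ only shifts the problem to strings ending in $01$ (or $10$), and input-splitting tricks (feeding only every other bit to $\Phi$ and using the rest as a ``patience'' source) destroy the pointwise link $\Psi(X)=\Phi(X)$ that your last step needs. So as it stands the proposal is a plan with an unfilled gap, not a proof.

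For comparison, the paper does \emph{not} try to complete $\Phi$ to an almost total functional. It uses the computability of $\rho$ only to approximate the level sets $\MP S_\ell=\{X:|\Phi^X|\ge\ell\}$ from inside by clopen sets, obtaining restrictions $\widehat\Phi_e$ of $\Phi$ with $\Pi^0_1$ domains that miss at most $2^{-e}$ of $\dom(\Phi)$. It then builds, directly, a $\rho$-Martin-L\"of test $(V_e)_e$ where $V_e$ collects those $\tau$ for which \emph{no} $X\in\mlr^e\cap\llb D^e_{|\tau|}\rrb$ has $\Phi^X\succeq\tau$; the bound $\rho(V_e)\le 2^{-e+1}$ comes from the clopen approximation error plus $\lambda((\mlr^e)^{\mathsf c})$. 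Any $Y\in\mlr_\rho$ avoids some $V_e$, giving for each $\ell$ an $X_\ell\in\mlr^e$ in the $\Pi^0_1$ domain with $\Phi^{X_\ell}\succeq Y\uh\ell$, and a compactness/limit argument yields a random $X^*\in\dom(\widehat\Phi_e)$ with $\Phi(X^*)=Y$. This sidesteps entirely the need for a $\Psi$ with $\lambda_\Psi\ge\rho$.
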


\begin{proof}

First we define a collection of Turing functionals $(\widehat\Phi_e)_{e\in\omega}$ that will serve as approximations for the functional $\Phi$.  Note that $\dom(\Phi)=\bigcap_{\ell\in\omega}\MP S_\ell$, where for each~$\ell$,
\[
\MP S_\ell=\{X\in\cs:(\exists k)\;|\Phi^{X\uh k}|\geq \ell\}.
\]
(which is uniformly effectively open).

For each $e$, we define a sequence of finite sets of strings $(C^e_\ell)_{\ell\in\omega}$ such that for every $\ell$
\begin{itemize}

\item[(i)] $\llb C_\ell^e \rrb \subseteq \MP S_\ell$, and
\item[(ii)] $\lambda(\MP S_\ell\setminus \llb C^e_\ell\rrb)\leq 2^{-\ell-e-1}$.
\end{itemize}
The sequence $(C^e_\ell)_{\ell\in\omega}$ can be effectively obtained, since $\rho$ is a computable semi-measure that is induced by $\Phi$, which implies that $\lambda(\MP S_\ell)$ is computable uniformly in~$\ell$. Each $\llb C^e_\ell \rrb$ is clopen, and therefore so are the sets $\bigcap_{k \leq \ell} \llb C^e_k \rrb$ for each $\ell\in\omega$. Let then $(D^e_\ell)_{e,\ell\in\omega}$ be a computable bi-sequence of sets of finite strings such that
\[
\llb D^e_\ell \rrb =\bigcap_{k \leq \ell} \llb C^e_k \rrb.
\]

Next we set 
\[
\widehat\Phi_e:=\{(\sigma,\tau)\in\Phi: \sigma\in D^e_{|\tau|}\},
\]
so that 
\[
\dom(\widehat\Phi_e)=\bigcap_{\ell\in\omega}\llb D^e_\ell\rrb =\bigcap_{\ell\in\omega}\llb C^e_\ell\rrb.
\]
Since each $\llb D^e_\ell\rrb$ is clopen, it follows that $\dom(\widehat\Phi_e)$ is a $\Pi^0_1$ class uniformly in~$e$. Moreover, $\Phi_e$ is a restriction of~$\Phi$ such that
\[
\lambda ( \dom(\Phi) \setminus \dom(\Phi_e) ) \leq \sum_{\ell\in\omega}\lambda(\MP S_\ell \setminus \llb C^e_\ell \rrb) \leq \sum_{\ell\in\omega} 2^{-e-l-1} \leq 2^{-e}.
\]
Note also that for for each $e,\ell$
\[
\lambda(\MP S_\ell \setminus \llb D^e_\ell \rrb) = \lambda \left(\bigcap_{k \leq \ell} \MP S_k \setminus \bigcap_{k \leq \ell} \llb C^e_k \rrb \right) \leq \sum_{\ell \leq k} 2^{-e-\ell-1} \leq 2^{-e}
\]
(where we use the definition of the $D^e_\ell$ and the fact that the $S_k$ are non-increasing), an inequality we will need at the end of the proof.

Now, for each $e\in\omega$, let $\Theta_e$ be the predicate on $\str$ defined by
\[
\Theta_e(\tau)\;\;\text{if and only if}\;\; \forall X [X\notin\mlr^e \;\vee\; X \notin \llb D^e_{|\tau|} \rrb \;\vee\; \Phi^X \bot \tau],
\]
where $ \Phi^X \bot \tau$ means that $ \Phi^X$ has length at least $|\tau|$ and is incomparable with~$\tau$, and $\mlr^e$ is the complement of the $e^{\mathrm{th}}$ level of the universal Martin-L\"of test (with respect to the Lebesgue measure).  The predicate $[X\notin\mlr^e \;\vee\; X \notin \llb D^e_{|\tau|} \rrb \;\vee\; \Phi^X \bot \tau]$ is $\Sigma^0_1$ over~$X$; therefore, by effective compactness, $\Theta_e$ is also $\Sigma^0_1$ uniformly in~$e$. For each~$e$, let $V_e$ be a maximal prefix-free set of strings among those satisfying $\Theta_e$. Note that $V_e$ is effectively open uniformly in~$e$. Let us evaluate $\lambda_\Phi(V_e)$:
\begin{eqnarray*}
\lambda_\Phi(V_e)&  = & \lambda(\{ X : (\exists \tau \in V_e) \; \Phi^X \succeq \tau\})\\
 & \leq & \lambda(\{ X : (\exists \tau) \; \Phi^X \succeq \tau \wedge(X \notin \llb D^e_{|\tau|} \rrb \vee X \in (\mlr^e)\compl)\})\\
 & \leq & \lambda \left(\bigcup_l \MP S_{l} \setminus \llb D^e_{l}\rrb \right) + \lambda((\mlr^e)\compl) \\
 & \leq & 2^{-e} + 2^{-e}.
\end{eqnarray*}

Thus, $(V_e)_{e\in\omega}$ is a $\lambda_\Phi$-Martin-L\"of test. This means that for every $\lambda_\Phi$--Martin-L\"of random~$Y$, there must be an~$e$ such that $Y \notin \llb V_e \rrb$, or in other words (by definition of $V_e$): for every prefix $Y \uh \ell$, there is some $X_\ell \in \mlr^e \cap \llb D^e_\ell \rrb$ such that $\widehat\Phi_e^{X_\ell} \succeq Y \uh \ell$. By compactness, one can assume, up to extraction of a subsequence, that the sequence $(X_\ell)_{\ell\in\omega}$ converges to some $X^*$. Since $X_\ell \in \llb D^e_\ell \rrb$ for all~$\ell$, and since the sets $\llb D^e_\ell \rrb$ are closed and non-increasing, it follows that $X^*$ belongs to all $\llb D^e_\ell \rrb$, i.e., $X^*$ is in the domain of $ \widehat\Phi_e$. By continuity of Turing functionals, $\widehat\Phi_e^{X^*} = \lim_\ell \widehat\Phi_e^{X_\ell} = \lim_\ell Y \uh \ell = Y$. Moreover, each $X_\ell$ belongs to $\mlr^e$ and $\mlr^e$ is closed, so $X^*$ belongs to $\mlr^e$ as well. Therefore~$Y$ has a Martin-L\"of random pre-image by $\widehat\Phi_e$, namely~$X^*$. Since $\widehat\Phi_e$ is a restriction of $\Phi$, the result follows.
\end{proof}

We are now in a position to prove Theorem \ref{thm-shen-question-computable}.

\begin{proof}[Proof of Theorem \ref{thm-shen-question-computable}]
Given $Y\in\Phi(\mlr)$, it follows from Theorem \ref{thm-preservation-comp-sem} that $Y\in\mlr_\rho$.  Since $\Psi$ induces $\rho$, by Theorem \ref{thm-ex-nihilo-comp-sem} there is some $X\in\mlr$ such that $\Psi(X)=Y$.  This shows that $\Phi(\mlr)\subseteq\Psi(\mlr)$, and a symmetric argument shows that $\Psi(\mlr)\subseteq\Phi(\mlr)$.
\end{proof}

Notice that the positive answer to Shen's question is an immediate consequence of randomness preservation and the No Randomness Ex Nihilo principle.  We see this again in Corollary~\ref{cor-shen-question-2ran} below in the context of left-c.e.\ semi-measures and $2$-randomness.

As our definition of randomness with respect to a computable semi-measure behaves much like Martin-L\"of randomness with respect to a computable measure, it is reasonable to ask if there are any sequences that are random with respect to some computable semi-measure but no computable measure.  We answer this question in the negative.

\begin{Proposition}
$X\in\cs$ is random with respect to a computable measure if and only if $X$ is random with respect to a computable semi-measure.
\end{Proposition}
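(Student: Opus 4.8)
The plan is to prove the two implications separately; the forward one is immediate and the reverse one carries all the content.

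For the easy direction, suppose $X\in\mlr_\mu$ for some computable measure $\mu$. Then $\mu$ is in particular a computable semi-measure (it satisfies condition (ii) with equality), and I claim $X$ is random with respect to this semi-measure. Indeed, any semi-measure $\mu$-test $(U_i)_{i\in\omega}$, with the $U_i$ taken prefix-free, yields a measure $\mu$-test $(\llb U_i\rrb)_{i\in\omega}$, since $\mu(\llb U_i\rrb)=\sum_{\sigma\in U_i}\mu(\sigma)=\mu(U_i)\le 2^{-i}$ by additivity of $\mu$ over disjoint cylinders. As $X$ passes every measure $\mu$-test, it passes every semi-measure $\mu$-test, so $X\in\mlr_\mu$ in the semi-measure sense.

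For the reverse direction, let $\rho$ be a computable semi-measure with $X\in\mlr_\rho$. I would produce a single computable probability measure $\mu$ with $\mlr_\rho\subseteq\mlr_\mu$, which clearly suffices. The tempting idea of normalizing $\rho$ by its total output mass $q=\lim_n\sum_{|\sigma|=n}\rho(\sigma)$ does not work: this $q$ is the limit of a computable non-increasing sequence and so is only right-c.e., not computable in general. Instead I would \emph{complete} $\rho$ to a dominating measure, routing all the mass that $\rho$ loses down the leftmost branch. Concretely, define $\mu$ top-down by $\mu(\emptystr)=1$ and, for every $\sigma$, $\mu(\sigma 1)=\rho(\sigma 1)$ and $\mu(\sigma 0)=\mu(\sigma)-\rho(\sigma 1)$. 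Then $\mu(\sigma 0)+\mu(\sigma 1)=\mu(\sigma)$, so $\mu$ is additive with $\mu(\emptystr)=1$, and a straightforward induction using $\rho(\sigma 0)+\rho(\sigma 1)\le\rho(\sigma)$ shows $\mu(\sigma)\ge\rho(\sigma)\ge 0$ for all $\sigma$. Thus $\mu$ is a genuine probability measure dominating $\rho$ with constant $1$.

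Two things then remain, and the domination-to-randomness step is the clean one: given any $\mu$-test $(\mathcal{U}_i)_{i\in\omega}$ witnessing $X\notin\mlr_\mu$, write $\mathcal{U}_i=\llb V_i\rrb$ with $V_i$ uniformly c.e.\ and prefix-free; then $\rho(V_i)=\sum_{\sigma\in V_i}\rho(\sigma)\le\sum_{\sigma\in V_i}\mu(\sigma)=\mu(\mathcal{U}_i)\le 2^{-i}$, so $(V_i)_{i\in\omega}$ is a $\rho$-test that $X$ fails, contradicting $X\in\mlr_\rho$. Hence $\mlr_\rho\subseteq\mlr_\mu$. The one point I would treat with care --- and the only real obstacle --- is confirming that the completion $\mu$ is computable: each value $\mu(\sigma)$ is a signed sum of at most $|\sigma|+1$ values of $\rho$ (it resets to $\rho(\sigma)$ at every $1$-step and subtracts one $\rho$-value at every $0$-step), so computability of $\mu$ follows from that of $\rho$ once one approximates each term finely enough to control the accumulated error.
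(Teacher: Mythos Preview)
Your proof is correct and follows the same overall strategy as the paper: build a computable probability measure $\mu$ with $\mu\ge\rho$ pointwise, then observe that any $\mu$-test is automatically a $\rho$-test, so $\mlr_\rho\subseteq\mlr_\mu$. The only difference is in how the dominating measure is constructed. The paper distributes the mass $g(\sigma)=\rho(\sigma)-\rho(\sigma0)-\rho(\sigma1)$ lost at each node $\sigma$ uniformly over all descendants, setting $\mu(\sigma)=\rho(\sigma)+\sum_{\tau\prec\sigma}2^{|\tau|-|\sigma|}g(\tau)$; you instead push all the lost mass down the $0$-child, setting $\mu(\sigma1)=\rho(\sigma1)$ and $\mu(\sigma0)=\mu(\sigma)-\rho(\sigma1)$. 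Your version has the advantage that the induction $\mu\ge\rho$ and the computability of $\mu$ are essentially one-liners (since, as you note, the recursion resets at every $1$-bit, so $\mu(\sigma)$ is a short signed sum of $\rho$-values); the paper's version is more symmetric but requires a slightly longer algebraic verification that $\mu$ is additive. Either completion does the job, and the domination-to-randomness step is identical.
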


\begin{proof}
As every computable measure is a computable semi-measure, one direction is immediate.  Suppose now that $X$ is not random with respect to any computable measure.  Let $\rho$ be a computable semi-measure.  We define the function $g:\str\rightarrow[0,1]$ to be
\[
g(\sigma)=\rho(\sigma)-(\rho(\sigma0)+\rho(\sigma1))
\]
for every $\sigma\in\str$.  Clearly $g$ is computable since $\rho$ is.  Next we define $\mu:\str\rightarrow[0,1]$ so that $\mu(\emptystr)=1$ and for $|\sigma| \geq 1$:
\[
\mu(\sigma)=\rho(\sigma)+\sum_{\tau\prec\sigma}2^{|\tau|-|\sigma|}g(\tau).
\]
Clearly $\mu$ is computable and $\rho(\sigma)\leq\mu(\sigma)$ for every $\sigma\in\str$.  We just need to verify that $\mu$ is a measure, which we prove by induction.  For any $\sigma\in\str$
\[
\begin{split}
\mu(\sigma0)+\mu(\sigma1)&=\rho(\sigma0)+\sum_{\tau\prec\sigma0}2^{|\tau|-|\sigma0|}g(\tau)+\rho(\sigma1)+\sum_{\tau\prec\sigma1}2^{|\tau|-|\sigma1|}g(\tau)\\
&=\rho(\sigma0)+\frac{1}{2}\sum_{\tau\prec\sigma0}2^{|\tau|-|\sigma|}g(\tau)+\rho(\sigma1)+\frac{1}{2}\sum_{\tau\prec\sigma1}2^{|\tau|-|\sigma|}g(\tau)\\
&=\rho(\sigma0)+\rho(\sigma1)+\sum_{\tau\preceq\sigma}2^{|\tau|-|\sigma|}g(\tau)\\
&=\rho(\sigma0)+\rho(\sigma1)+g(\sigma)+\sum_{\tau\prec\sigma}2^{|\tau|-|\sigma|}g(\tau)\\
&=\rho(\sigma)+\sum_{\tau\prec\sigma}2^{|\tau|-|\sigma|}g(\tau)\\
&=\mu(\sigma).
\end{split}
\]
Now since $X\notin\mlr_\mu$ by hypothesis, there is some $\mu$-Martin-L\"of test $(\mathcal{U}_i)_{i\in\omega}$ such that $X\in\bigcap_{i\in\omega}\mathcal{U}_i$.  Letting $U_i$ be such that $\llb U_i\rrb=\mathcal{U}_i$ for each $i\in\omega$,  $\rho(U_i)\leq\mu(\mathcal{U}_i)$ for every $i$, which implies that $(U_i)_{i\in\omega}$  is a $\rho$-Martin-L\"of test.  Thus $X\notin\mlr_\rho$.
\end{proof}

\section{Shen's question for left-c.e.\ semi-measures}

In this section, we prove that Question \ref{q:shen}, Shen's original question for left-c.e.\ semi-measures, has a negative answer.

\begin{Theorem}\label{thm-shen-counterexample}
There exist Turing functionals $\Phi$ and $\Psi$ such that $\lambda_\Phi=\lambda_\Psi$ and yet $\Phi(\mlr)\neq\Psi(\mlr).$

\end{Theorem}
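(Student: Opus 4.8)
The plan is to isolate the single point where the computable-case argument fails and turn it into a counterexample. For a computable semi-measure the proof of the No Randomness Ex Nihilo principle (Theorem~\ref{thm-ex-nihilo-comp-sem}) used that the measure of the set of inputs on which $\Phi$ produces at least $\ell$ output bits is computable uniformly in $\ell$; for a genuinely left-c.e.\ semi-measure this quantity is only left-c.e., so the region carrying the surviving infinite-output mass is not effectively pinned down, and two functionals inducing the \emph{same} $\rho$ can place it very differently. Concretely, I would fix a non-computable $\Delta^0_2$ target $Y$ (for definiteness $Y=\emptyset'$, viewed as a sequence) together with a computable approximation $(Y_s)_{s\in\omega}$ with $Y_s\to Y$; since $Y$ is non-computable, $(Y_s)$ changes infinitely often. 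The whole construction must keep $\rho$ \emph{free of an atom} at $Y$, i.e.\ $\lim_\ell\rho(Y\uh\ell)=0$: if $Y$ were an atom then $\{X:\Xi^X=Y\}$ would have positive measure for \emph{every} $\Xi$ inducing $\rho$, hence would meet $\mlr$ for both functionals, and no counterexample could result.

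Both $\Phi$ and $\Psi$ will be built from one and the same stream of \emph{allocation amounts}: following $(Y_s)$, I maintain a shrinking ``live front'' of input-measure committed to the current approximate prefix, extend this commitment by one output bit whenever a further bit of $Y_s$ looks stable, and, whenever $Y_s$ changes at some position $k$, declare the over-committed part dead (it keeps a finite, now-incorrect output) and re-start from fresh measure above $Y\uh k$. I arrange the committed amounts to be geometrically small so that $\lim_\ell\rho(Y\uh\ell)=0$. Because $\lambda_\Phi(\tau)$ and $\lambda_\Psi(\tau)$ depend only on the total input-measure allocated to extensions of $\tau$, if $\Phi$ and $\Psi$ commit the \emph{same} amounts to each output string and differ only in \emph{where} in $\cs$ that measure is placed, then $\lambda_\Phi=\lambda_\Psi=\rho$ for free. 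The two functionals will differ only through their placement rule.

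For $\Phi$ I would place the live front so that the input is read as a guess for the stabilization stages of $(Y_s)$, keeping committed exactly those inputs whose guesses are correct; a Martin-L\"of random that computes the true modulus, e.g.\ $\Omega$ (recall $\Omega\equiv_T\emptyset'$), then supplies correct guesses and survives, so $\Phi^{X_0}=Y$ for a random $X_0$ and $Y\in\Phi(\mlr)$, while the surviving fiber is Lebesgue-null, as required. For $\Psi$ I would instead steer the live front into a fixed Martin-L\"of test: maintain a nested sequence $(\mathcal{G}_i)_{i\in\omega}$ of uniformly effectively open sets with $\lambda(\mathcal{G}_i)\le 2^{-i}$, keep the current live front inside $\mathcal{G}_i$, and, at each re-start dictated by a change in $Y_s$ (which $\Psi$ can follow, $(Y_s)$ being computable), push it into $\mathcal{G}_{i+1}$. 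Since there are infinitely many re-starts, every input with $\Psi^X=Y$ lies in $\bigcap_i\mathcal{G}_i$, which contains no Martin-L\"of random; hence $Y\notin\Psi(\mlr)$. Together with the previous paragraph this gives $\Phi(\mlr)\neq\Psi(\mlr)$ while $\lambda_\Phi=\lambda_\Psi$.

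The main obstacle is making the two placements genuinely simultaneous under the \emph{same} amount-stream: the identical allocations must admit on the one hand a placement ($\Phi$) whose null surviving fiber still catches a random, and on the other a placement ($\Psi$) whose surviving fiber is swept into $\bigcap_i\mathcal{G}_i$, all while respecting monotonicity of Turing functionals (the region assigned to $Y\uh(\ell{+}1)$ must sit inside the region already assigned to $Y\uh\ell$) and the exact bookkeeping $\lambda_\Phi(\tau)=\lambda_\Psi(\tau)$ for every $\tau$. The delicate points I expect to fight with are (i) calibrating the geometric amounts so that the live-front measure at level $\ell$ never exceeds $\lambda(\mathcal{G}_{i(\ell)})$, leaving room to re-start inside the next test level, and (ii) verifying the no-atom property so that $\Psi$'s fiber is null in the first place. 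Finally, this is exactly the gap between Martin-L\"of randomness and weak $2$-randomness: the surviving random $\Omega$ is Martin-L\"of random but, computing $\emptyset'$, fails to form a minimal pair with $\emptyset'$ and so is not weakly $2$-random (Theorem~\ref{thm-w2r-minimal-pair}); the discrepancy between $\Phi$ and $\Psi$ therefore appears only at the Martin-L\"of level, consistent with the positive answer at the level of $2$-randomness recorded in Corollary~\ref{cor-shen-question-2ran}.
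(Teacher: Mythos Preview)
Your plan shares the paper's key insight---build two functionals that enumerate the \emph{same amounts} of input-measure toward each output string (hence induce the same semi-measure) but \emph{place} that measure in different regions of $\cs$, so that one surviving fiber meets $\mlr$ and the other does not---but the execution is far more elaborate than necessary, and one step is not correct as written.

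The paper's construction is very short. The output target is $0^\omega$, not $\emptyset'$; $\Phi$ is simply $\{(\Omega_s\uh n,0^n):s\ge n\}$, which immediately gives $\dom(\Phi)=\{\Omega\}$ and $\Phi(\Omega)=0^\omega$; and $\Psi$ copies $\Phi$'s enumeration step-for-step except that each time a pair $(\sigma,0^{|\sigma|})$ enters $\Phi$, $\Psi$ uses instead the \emph{leftmost} unused string of length $|\sigma|$. This guarantees $\lambda_\Phi(0^n)=\lambda_\Psi(0^n)$ for all $n$, and since $\Psi$'s domain is downward-closed under the lexicographic order and null, it is $\{0^\omega\}$. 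Hence $\Phi(\mlr)=\{0^\omega\}$ while $\Psi(\mlr)=\emptyset$. There is no need for a live-front mechanism, a $\Delta^0_2$ output target, or steering into a Martin-L\"of test: ``leftmost'' already forces the $\Psi$-fiber to the single non-random point $0^\omega$.

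The genuine gap in your sketch is the $\Phi$ side. You write that $\Phi$ ``reads the input as a guess for the stabilization stages of $(Y_s)$'' and that $\Omega$, since $\Omega\equiv_T\emptyset'$, ``supplies correct guesses and survives''. But Turing equivalence does not put $\Omega$ in the domain of such a $\Phi$: the inputs that survive are those whose \emph{bits literally encode} a correct modulus in whatever format $\Phi$ parses, and $\Omega$ is not of that form. To make a specific random land in the fiber you must place the allocated measure \emph{around that random's initial segments}, which is exactly what the paper does by enumerating $(\Omega_s\uh n,0^n)$. Once you see this, the modulus-guess framing, the choice $Y=\emptyset'$, and the calibration worries in your items (i)--(ii) all fall away; the paper's two-line placement rules already give the counterexample.
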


\begin{proof}
We define $\Phi$ and $\Psi$ as c.e.\ sets of pairs $(\sigma,\tau)\in\str\times\str$.  Recall that Chaitin's $\Omega$ is defined to be
\[
\Omega:=\sum_{U(\sigma)\halts}2^{-|\sigma|},
\]
where $U$ is a universal prefix-free Turing machine (see~\cite{DowHir10} or ~\cite{Nie09} for more details).  Further, it is well-known that $\Omega$ is Martin-L\"of random and left-c.e.  Let $(\Omega_s)_{s\in\omega}$ be a computable non-decreasing sequence of rationals converging to $\Omega$.  We can think of each $\Omega_s$ as a finite string, so that for $n<|\Omega_s|$, $\Omega_s(n)$ is the $n^{\mathrm{th}}$ bit of the string $\Omega_s$. 

We define the following functional
\[
\Phi = \bigcup_n \{(\Omega_s \uh n , 0^n) : s \geq n\} 
\]

It is easy to see that $\dom(\Phi)=\{\Omega\}$, and $\Phi(\Omega)=0^\omega$ . Indeed, if $X \not= \Omega$, $X$ and $\Omega$ disagree on some bit, say the $k$-th bit, and then for some $t$ we have, for all $s \geq t$, $\Omega_s \uh k = \Omega \uh k \not= X \uh k$ and thus by construction $|\Phi^X|<t$, i.e. , $X \notin \dom(\Phi)$.

Next, we define $\Psi$.  For each $(\sigma,0^{|\sigma|})$ that we enumerate into $\Phi$ at stage $s$, let $\tau$ be the leftmost string of length $|\sigma|$ such that $(\tau,0^{|\sigma|})$ has not yet been enumerated into $\Psi$ and enumerate this pair into $\Psi$.  Observe that this construction ensures that (1) for all~$n$, $\lambda_\Phi(0^n)=\lambda_\Psi(0^n)$, and thus $\lambda_\Phi=\lambda_\Psi$ as both are equal to~$0$ on strings that are not of type~$0^n$ and (2) the domain of~$\Psi$ contains $0^\omega$ and is closed downards under the lexicographic order. A set which is closed downwards under the lexicographic order is either the empty set, the singleton~$0^\omega$, or a set of positive measure. It is not the emptyset and it cannot have positive measure, because otherwise there would exists a positive~$r$ such that $\lambda_\Psi(0^n) > r$ for all~$n$. This is impossible since $\lambda_\Psi=\lambda_\Phi$ and $\lambda_\Phi(0^n)$ tends to~$0$. Thus, $\dom(\Psi)=\{0^\omega\}$ and $\Psi(0^\omega)=0^\omega$, which in particular implies $\Psi(\mlr)=\emptyset\not=\Phi(\mlr)$.
\end{proof}

\begin{Remark}
The above proof actually works for any $\Delta^0_2$ Martin-L\"of random sequence.  Further, it is not necessary that $\lambda(\dom(\Phi))=0$.  If we define $\widehat\Phi$ and $\widehat\Psi$ by
\[
\left\{ \begin{array}{l} \widehat{\Phi}(0X)=\Phi(X) \\ \widehat\Phi(1X)=X  \end{array} \right. ~  ~  ~  ~  \text{and} ~  ~  ~  ~ ~ \left\{ \begin{array}{l} \widehat{\Psi}(0X)=\Psi(X) \\ \widehat\Psi(1X)=X  \end{array} \right.
\]

(where $\Phi$ and $\Psi$ are defined in the previous proof) we then have $\lambda(\dom(\widehat\Phi))=\lambda(\dom(\widehat\Psi))=\frac{1}{2}$, while $
\lambda_{\widehat\Phi}=(\lambda_\Phi+\lambda)/2=(\lambda_\Psi+\lambda)/2=\lambda_{\widehat\Psi}
$, and $\widehat\Phi(0\Omega)=\Phi(\Omega)$ has no Martin-L\"of random pre-image via $\widehat\Psi$. 
\end{Remark}

Although Question \ref{q:shen} has a negative answer, if we rephrase the question in terms of a stronger notion of randomness then we can answer the question in the affirmative.  To do so, we have to extend our definition of randomness with respect to a computable semi-measure to a definition of 2-randomness with respect to a $\emptyset'$-computable semi-measure.

First, we extend several definitions from the previous section:
\begin{itemize}
\item A semi-measure $\rho$ is $\jump$-computable if the values $(\rho(\sigma))_{\sigma\in\str}$ are uniformly $\jump$-computable.  
\item For a $\emptyset'$-computable semi-measure $\rho$, a $\rho$-$\jump$-Martin-L\"of test is a uniformly $\jump$-c.e.\ sequence $(U_i)_{i\in\omega}$ of subsets of $\str$ such that $\rho(U_i)\leq 2^{-i}.$
\item A sequence $X\in\cs$ passes the $\rho$-$\jump$-Martin-L\"of test $(U_i)_{i\in\omega}$ if $X\notin\bigcap_{i \in \omega}\llb U_i\rrb$.
\item For a $\jump$-computable semi-measure $\rho$, $X\in\cs$ is $\rho$-2-random, denoted $X\in\twomlr_\rho$, if $X$ passes every $\rho$-$\jump$-Martin-L\"of test.
\end{itemize}

\noindent Using these definitions, the following result is obtained from relativizing the proof of Theorems \ref{thm-preservation-comp-sem} and \ref{thm-ex-nihilo-comp-sem}.

\begin{Corollary}\label{cor-2-randoms-rand-pres-ex-nihilo}
Let $\rho$ be a left-c.e.\ semi-measure, and let $\Phi$ be a Turing functional such that $\rho=\lambda_\Phi$.
\begin{itemize}
\item[(i)] For every $X\in\twomlr\cap\dom(\Phi)$, $\Phi(X)\in\twomlr_\rho$.
\item[(ii)] If $Y\in\twomlr_\rho$, then there is some $X\in\twomlr$ such that $\Phi(X)=Y$.
\end{itemize}
\end{Corollary}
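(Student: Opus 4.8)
The plan is to obtain Corollary~\ref{cor-2-randoms-rand-pres-ex-nihilo} by relativizing the entire development of Section~\ref{sec-computable-semi-measures} to the oracle $\jump$. The key observation is that the notion of a $\jump$-computable semi-measure, a $\rho$-$\jump$-Martin-L\"of test, and $\rho$-$2$-randomness $\twomlr_\rho$ are defined precisely so as to be the $\jump$-relativizations of their unrelativized counterparts: a $\jump$-computable semi-measure is a semi-measure whose values are uniformly $\jump$-computable, a $\rho$-$\jump$-Martin-L\"of test is a uniformly $\jump$-c.e.\ sequence of subsets of $\str$ with the measure bound $\rho(U_i)\leq 2^{-i}$, and $\twomlr = \mlr^{\jump}$ by the standard identification of $2$-randomness with $\mlr$ relativized to $\jump$. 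Thus the strategy is not to reprove anything from scratch but to check that every step in the proofs of Theorems~\ref{thm-preservation-comp-sem} and~\ref{thm-ex-nihilo-comp-sem} goes through verbatim once the ambient oracle is changed from $\emptyset$ to $\jump$.

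For part~(i), randomness preservation, I would relativize the proof of Theorem~\ref{thm-preservation-comp-sem}. Given $X\in\twomlr\cap\dom(\Phi)$ with $\Phi(X)\notin\twomlr_\rho$, there is a $\rho$-$\jump$-Martin-L\"of test $(U_i)_{i\in\omega}$ capturing $\Phi(X)$; one forms $\mathcal{V}_i=\bigcup_{\tau\in U_i}\llb\Phi^{-1}(\tau)\rrb$, which is now uniformly $\Sigma^0_1(\jump)$ rather than $\Sigma^0_1$ because the $U_i$ are $\jump$-c.e. The measure computation $\lambda(\mathcal{V}_i)\leq\rho(U_i)\leq 2^{-i}$ is unchanged, since it uses only the defining identity $\lambda(\llb\Phi^{-1}(\tau)\rrb)=\rho(\tau)$ from Theorem~\ref{thm-MachinesInduceSemiMeasures}, which does not depend on the oracle. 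Hence $(\mathcal{V}_i)_{i\in\omega}$ is a $\jump$-Martin-L\"of test capturing $X$, contradicting $X\in\twomlr=\mlr^{\jump}$.

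For part~(ii), the No Randomness Ex Nihilo principle, I would relativize the more delicate proof of Theorem~\ref{thm-ex-nihilo-comp-sem}. Here the key point is that since $\rho=\lambda_\Phi$ is $\jump$-computable, the quantities $\lambda(\mathcal{S}_\ell)$ are $\jump$-computable uniformly in $\ell$, so the clopen approximations $C^e_\ell$ and $D^e_\ell$ are obtained $\jump$-effectively, the functionals $\widehat\Phi_e$ become $\jump$-partial-computable, the predicates $\Theta_e$ are $\Sigma^0_1(\jump)$ (using $\jump$-effective compactness and taking $\mlr^e$ to be the $e$-th level of the universal $\jump$-Martin-L\"of test), and the sets $V_e$ form a $\lambda_\Phi$-$\jump$-Martin-L\"of test by the identical measure bound. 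The compactness extraction of a convergent subsequence $(X_\ell)$ with limit $X^*\in\mlr^{\jump}=\twomlr$, together with the continuity argument giving $\widehat\Phi_e^{X^*}=Y$, is purely topological and oracle-free. I expect the only mild subtlety, and the one step most worth stating explicitly, is confirming that each ingredient whose effectivity was previously witnessed by the computability of $\rho$ now survives with $\jump$-computability in its place; once that is verified, the argument produces an $X^*\in\twomlr$ with $\Phi(X^*)=Y$, as required. I would therefore write the proof as a single short paragraph asserting that the proofs of Theorems~\ref{thm-preservation-comp-sem} and~\ref{thm-ex-nihilo-comp-sem} relativize to $\jump$ in the evident way, noting the two or three places where a $\Sigma^0_1$ object becomes $\Sigma^0_1(\jump)$ and where $\mlr$ becomes $\mlr^{\jump}=\twomlr$.
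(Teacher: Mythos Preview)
Your proposal is correct and is exactly the approach the paper takes: the paper simply states that the result ``is obtained from relativizing the proof of Theorems~\ref{thm-preservation-comp-sem} and~\ref{thm-ex-nihilo-comp-sem}'' to $\jump$, without spelling out the details you supply. The only point worth making explicit (which you use but do not quite isolate) is that every left-c.e.\ semi-measure is $\jump$-computable, which is precisely what makes the relativized hypotheses of Theorems~\ref{thm-preservation-comp-sem} and~\ref{thm-ex-nihilo-comp-sem} available.
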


\noindent Corollary \ref{cor-2-randoms-rand-pres-ex-nihilo}, together with an argument similar to the one in the proof of Theorem \ref{thm-shen-question-computable}, yields the following.

\begin{Corollary}\label{cor-shen-question-2ran}
If $\Phi$ and $\Psi$ are Turing functionals such that $\lambda_\Phi=\lambda_\Psi$, then $\Phi(\twomlr)=\Psi(\twomlr)$.  
\end{Corollary}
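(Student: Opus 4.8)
The plan is to mirror exactly the derivation of Theorem~\ref{thm-shen-question-computable} from Theorems~\ref{thm-preservation-comp-sem} and~\ref{thm-ex-nihilo-comp-sem}, but now at the level of $2$-randomness, using the two halves of Corollary~\ref{cor-2-randoms-rand-pres-ex-nihilo} in place of randomness preservation and No Randomness Ex Nihilo. The key observation that makes this work, and that removes the computability hypothesis present in Theorem~\ref{thm-shen-question-computable}, is the following: by Theorem~\ref{thm-MachinesInduceSemiMeasures}, any left-c.e.\ semi-measure $\rho$ has values $(\rho(\sigma))_{\sigma\in\str}$ that are uniformly left-c.e., hence uniformly $\jump$-computable; so $\rho$ is automatically a $\jump$-computable semi-measure, and $\twomlr_\rho$ is well-defined for \emph{every} left-c.e.\ $\rho$ with no extra assumption. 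This is precisely why Corollary~\ref{cor-2-randoms-rand-pres-ex-nihilo} could be stated for an arbitrary left-c.e.\ $\rho$, and it is the reason the hypothesis $\lambda_\Phi=\lambda_\Psi$ alone suffices here.

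With that in hand the proof is short. Set $\rho:=\lambda_\Phi=\lambda_\Psi$, which is a left-c.e.\ semi-measure by Theorem~\ref{thm-MachinesInduceSemiMeasures}(i), and note $\Phi$ and $\Psi$ both induce $\rho$. First I would prove the inclusion $\Phi(\twomlr)\subseteq\Psi(\twomlr)$. Take $Y\in\Phi(\twomlr)$, say $Y=\Phi(X)$ with $X\in\twomlr$; in particular $X\in\twomlr\cap\dom(\Phi)$, so Corollary~\ref{cor-2-randoms-rand-pres-ex-nihilo}(i) gives $Y\in\twomlr_\rho$. Since $\Psi$ also induces $\rho$, Corollary~\ref{cor-2-randoms-rand-pres-ex-nihilo}(ii) applied to $\Psi$ produces some $X'\in\twomlr$ with $\Psi(X')=Y$, so $Y\in\Psi(\twomlr)$. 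A symmetric argument, swapping the roles of $\Phi$ and $\Psi$, gives $\Psi(\twomlr)\subseteq\Phi(\twomlr)$, and the two inclusions yield the equality.

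I do not expect any genuine obstacle, since all the technical content has been discharged in Corollary~\ref{cor-2-randoms-rand-pres-ex-nihilo}; the only point requiring a moment's care is the justification that $\rho$ really is $\jump$-computable so that $\twomlr_\rho$ is defined and the corollary applies. The contrast with Section~4's counterexample (Theorem~\ref{thm-shen-counterexample}) is worth keeping in mind as a sanity check: there $\Phi(\mlr)\neq\Psi(\mlr)$ because Martin-L\"of randomness with respect to a left-c.e.\ $\rho$ lacks a satisfactory No Randomness Ex Nihilo principle, whereas passing to the $\jump$-relativized notion restores both preservation and Ex Nihilo, and it is exactly this restoration that the present corollary packages. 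Thus the whole argument is the formal counterpart of the remark following the proof of Theorem~\ref{thm-shen-question-computable}, that a positive answer to the Shen-type question follows immediately whenever both randomness preservation and the No Randomness Ex Nihilo principle are available for the relevant randomness notion.
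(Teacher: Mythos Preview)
Your proposal is correct and matches the paper's approach exactly: the paper states that Corollary~\ref{cor-2-randoms-rand-pres-ex-nihilo} together with an argument similar to the proof of Theorem~\ref{thm-shen-question-computable} yields the result, which is precisely what you spell out. Your added remark that every left-c.e.\ semi-measure is automatically $\jump$-computable (so that $\twomlr_\rho$ is defined) is a helpful clarification the paper leaves implicit.
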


\section{The direct adaptation approach}\label{sec-direct-modifications}

A positive answer to Shen's question would have yielded a definition of randomness with respect to a left-c.e.\ semi-measure:  for a left-c.e.\ semi-measure $\rho$, the sequences that are random with respect to $\rho$ would simply be the images of the Martin-L\"of random sequences under any functional that induces~$\rho$.  But as we have answered Shen's question in the negative, we need a different strategy to define randomness with respect to a left-c.e.\ semi-measure.  

In this section, we discuss certain desiderata for our definition and then we consider several definitions of randomness with respect to a left-c.e.\ semi-measure that are obtained by directly modifying standard definitions of randomness with respect to a computable measure.

\subsection{Desiderata for a definition of randomness with respect to a left-c.e.\ semi-measure}\label{subsec-desiderata}

Given that the collection of left-c.e.\ semi-measures extends the collection of computable measures, we would like our theory of randomness with respect to a left-c.e.\ semi-measure to extend the standard theory of randomness with respect to a computable measure.  To this end, it would be ideal to find a definition of randomness with respect to a semi-measure that satisfies a number of conditions, which we describe below. 

First, as every computable measure is a left-c.e.\ semi-measure, it seems natural to require the following:
\begin{itemize}
\item[(i)] {\bf Coherence:} $X$ is random with respect to a computable measure $\mu$ if and only if $X$ is random with respect to $\mu$ considered as a left-c.e.\ semi-measure.
\end{itemize}
Second, as the relationship between almost total Turing functionals and computable measures is analogous to the relationship between Turing functionals and left-c.e.\ semi-measures, we would like to extend the analogy by requiring the following two conditions:
\begin{itemize}
\item[(ii)] {\bf Randomness Preservation:} If $X$ is random and $\Phi$ is a Turing functional, then $\Phi(X)$ is random with respect to the semi-measure $\lambda_\Phi$.
\item[(iii)] {\bf No Randomness Ex Nihilo Principle:} If $Y$ is random with respect to the semi-measure $\lambda_\Phi$ for some Turing functional $\Phi$, then there is some random $X$ such that $\Phi(X)=Y$.
\end{itemize}
Lastly, in the theory of randomness with respect to a measure (computable or non-computable), a computable sequence is random with respect to some measure $\mu$ only if it is an atom of $\mu$, as shown by Reimann and Slaman \cite{2008arXiv0802.2705R}.   We extend this to the case of left-c.e.\ semi-measures:
\begin{itemize}
\item[(iv)] {\bf Computable Sequence Condition:}  If $X$ is computable and random with respect to a left-c.e.\ semi-measure $\rho$, then $\inf_n\rho(X\uh n)>0$.
\end{itemize}

\noindent With these conditions in mind, we now turn to a first candidate definition for randomness with respect to a semi-measure.

\subsection{Martin-L\"of randomness with respect to a left-c.e.\ semi-measure}

First we consider the same modification of Martin-L\"of randomness that we made in Section \ref{sec-computable-semi-measures} when defining randomness for a computable semi-measure.

\begin{Definition}
Let $\rho$ be a left-c.e.\ semi-measure. 
\begin{itemize} 
\item[(i)]  A \emph{$\rho$-Martin-L\"of test} is a sequence $(U_i)_{i\in\omega}$ of uniformly c.e.\ subsets of $\cs$ such that for each $i$,
\[
\rho(U_i)\leq 2^{-i}.
\]

\item[(ii)] $X\in\cs$ passes the $\rho$-Martin-L\"of test $(U_i)_{i\in\omega}$ if $X\notin\bigcap_{i \in \omega}\llb U_i\rrb$.

\medskip

\item[(iii)] $X\in\cs$ is \emph{$\rho$-Martin-L\"of random}, denoted $X\in\mlr_\rho$, if $X$ passes every $\rho$-Martin-L\"of test.

\end{itemize}
\end{Definition}

One interesting consequence of this definition is that the universal left-c.e.\ semi-measure $M$ is universal for Martin-L\"of randomness with respect to a left-c.e.\ semi-measure.

\begin{Proposition}\label{prop-universal-semi-measures}
Let $\mathcal{S}$ be the collection of left-c.e.\ semi-measures.  Then $\mlr_M=\bigcup_{\rho\in\mathcal{S}}\mlr_\rho$.
\end{Proposition}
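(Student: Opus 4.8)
The plan is to prove the two inclusions $\mlr_M \subseteq \bigcup_{\rho \in \MP S} \mlr_\rho$ and $\bigcup_{\rho \in \MP S} \mlr_\rho \subseteq \mlr_M$ separately, exploiting the defining property of the universal semi-measure $M$, namely that for each left-c.e.\ semi-measure $\rho$ there is a constant $c \in \omega$ with $\rho \leq c \cdot M$.

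For the inclusion $\bigcup_{\rho} \mlr_\rho \subseteq \mlr_M$, I would argue by contrapositive. Suppose $X \notin \mlr_M$; then there is an $M$-Martin-L\"of test $(U_i)_{i \in \omega}$ with $X \in \bigcap_i \llb U_i \rrb$. Fix an arbitrary left-c.e.\ semi-measure $\rho$ and a constant $c$ with $\rho \leq c \cdot M$; choosing an integer $k$ with $2^k \geq c$, the shifted sequence $(U_{i+k})_{i \in \omega}$ satisfies $\rho(U_{i+k}) \leq c \cdot M(U_{i+k}) \leq c \cdot 2^{-i-k} \leq 2^{-i}$, so it is a $\rho$-Martin-L\"of test that $X$ fails. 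Hence $X \notin \mlr_\rho$ for every $\rho \in \MP S$, which gives the inclusion. (Note that $M \in \MP S$, so this direction in fact shows $\mlr_M$ is the smallest of the classes $\mlr_\rho$.)

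For the reverse inclusion $\mlr_M \subseteq \bigcup_\rho \mlr_\rho$, the point is simply that $M$ is itself a left-c.e.\ semi-measure, so $M \in \MP S$ and therefore $\mlr_M \subseteq \bigcup_{\rho \in \MP S} \mlr_\rho$ trivially. This makes the ``$\subseteq$'' direction immediate and isolates all the content in the previous paragraph.

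The main obstacle — though it is more a matter of care than of genuine difficulty — lies in the first inclusion, specifically in verifying that the shifted test $(U_{i+k})_{i \in \omega}$ remains a legitimate $\rho$-Martin-L\"of test: one must confirm that it is uniformly c.e.\ (immediate, since it is a subsequence of a uniformly c.e.\ sequence) and that the inequality $\rho(U_{i+k}) \leq c \cdot M(U_{i+k})$ holds at the level of c.e.\ sets of strings rather than merely on cylinders. Here I would invoke the convention established earlier in Section~\ref{sec-computable-semi-measures} that the $U_i$ may be taken prefix-free, so that $\rho(U) = \sum_{\sigma \in U} \rho(\sigma) \leq c \sum_{\sigma \in U} M(\sigma) = c \cdot M(U)$ follows termwise from $\rho \leq c \cdot M$. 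With this observation the estimate is routine, and the proposition follows.
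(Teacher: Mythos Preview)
Your proof is correct and follows essentially the same approach as the paper: the inclusion $\mlr_M \subseteq \bigcup_\rho \mlr_\rho$ is immediate since $M \in \MP S$, and for the reverse inclusion both you and the paper use the universality inequality $\rho \leq c \cdot M$ to convert an $M$-Martin-L\"of test into a $\rho$-Martin-L\"of test via a shift. Your write-up is simply more explicit about the constant $k$ and the prefix-free convention needed to pass the inequality from strings to sets, but the argument is the same.
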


\begin{proof}
Clearly $\mlr_M\subseteq\bigcup_{\rho\in\mathcal{S}}\mlr_\rho$.  For the other direction, note that for any left-c.e.\ semi-measure $\rho$, every $M$-Martin-L\"of test can be transformed into a $\rho$-Martin-L\"of test since there is some $c$ such that $\rho(\sigma)\leq c\cdot M(\sigma)$ for every $\sigma\in\str$.  Thus, if $X\notin\mlr_M$, it follows that $X\notin\mlr_\rho$ for any left-c.e.\ semi-measure $\rho$.
\end{proof}

Even though every universal left-c.e.\ semi-measure is universal in the sense of Proposition \ref{prop-universal-semi-measures}, the converse does not hold.

\begin{Proposition}\label{prop-NotUniversalButAllRandoms}
There is a non-universal left-c.e.\ semi-measure $\widetilde M$ such that
\[
\mlr_{\widetilde M} = \mlr_M =\bigcup_{\rho\in\mathcal{S}}\mlr_\rho.
\]
\end{Proposition}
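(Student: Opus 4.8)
The plan is to construct a left-c.e.\ semi-measure $\widetilde M$ that has the same Martin-L\"of randoms as the universal semi-measure $M$, yet fails to dominate every left-c.e.\ semi-measure up to a multiplicative constant. By Proposition~\ref{prop-universal-semi-measures}, the equality $\mlr_M=\bigcup_{\rho\in\mathcal{S}}\mlr_\rho$ already holds, so the only two things I need to arrange are that (a) $\mlr_{\widetilde M}=\mlr_M$ and (b) $\widetilde M$ is \emph{not} universal, i.e.\ there is some left-c.e.\ semi-measure $\rho$ with $\rho\not\leq c\cdot\widetilde M$ for every $c\in\omega$.

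The natural construction is to shrink $M$ by a slowly vanishing factor that nonetheless leaves the Martin-L\"of test structure intact. First I would set $\widetilde M(\sigma)=f(|\sigma|)\cdot M(\sigma)$ for a computable, non-increasing sequence $(f(n))_{n\in\omega}$ of positive dyadic rationals with $f(0)=1$ and $f(n)\to 0$; one must check that multiplying by such a length-dependent factor preserves the semi-measure inequality $\widetilde M(\sigma)\geq\widetilde M(\sigma 0)+\widetilde M(\sigma 1)$, which holds precisely because $f$ is non-increasing (so $f(|\sigma|)\geq f(|\sigma|+1)$) and $M$ is itself a semi-measure; left-c.e.-ness is immediate since $M$ is left-c.e.\ and $f$ is computable. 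For (b), the key observation is that the factor $f(n)\to 0$ forces $\widetilde M(\sigma)/M(\sigma)\to 0$ along any sequence of strings of increasing length, so $M$ itself cannot be bounded by $c\cdot\widetilde M$ for a fixed constant; hence $\widetilde M$ is non-universal. (If one wants a cleaner witness, observe directly that no left-c.e.\ semi-measure $\rho$ with $\inf_n\rho(0^n)>0$ can satisfy $\rho\leq c\cdot\widetilde M$, since $\widetilde M(0^n)\leq f(n)\to 0$.)

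The substantive point, and the main obstacle, is (a): I must show that scaling by a factor tending to $0$ does not change the Martin-L\"of randoms. One inclusion, $\mlr_{\widetilde M}\supseteq\mlr_M$, is easy because $\widetilde M\leq M$, so every $\widetilde M$-test is automatically an $M$-test and failing the former implies failing the latter. The reverse inclusion $\mlr_{\widetilde M}\subseteq\mlr_M$ is the delicate one, since $\widetilde M$-tests can be \qt{larger} than $M$-tests. To handle this I would work level by level: given an $M$-Martin-L\"of test $(U_i)_{i\in\omega}$ with $M(U_i)\leq 2^{-i}$, I want to convert a sequence covered by all the $U_i$ into a sequence covered by a $\widetilde M$-test. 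The idea is to truncate each $U_i$ to strings that are long enough that $f$ has already become small: replace $U_i$ by the c.e.\ set $U_i'$ of those $\sigma\in U_i$ with $|\sigma|\geq g(i)$ for a sufficiently fast-growing computable $g$, so that on those strings $f(|\sigma|)$ is tiny but the covering is preserved for any non-eventually-short point (and a point covered by all $U_i$ is indeed covered by arbitrarily long strings in each, so it survives the truncation). Choosing $g$ so that $f(g(i))\cdot 2^{-i}$ still sums to a test-bound, one gets $\widetilde M(U_i')\leq f(g(i))\cdot 2^{-i}$, which can be driven below $2^{-i}$ after reindexing, yielding a genuine $\widetilde M$-test capturing the same points. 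The bookkeeping in choosing $f$ and $g$ compatibly, and verifying that the truncated sets still cover every point of $\bigcap_i\llb U_i\rrb$, is where the care is needed; once that is in place, both inclusions combine to give $\mlr_{\widetilde M}=\mlr_M$, completing the proof.
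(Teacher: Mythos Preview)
Your proposed construction does not work, and the reasoning about the two inclusions is reversed. From $\widetilde M\leq M$ you get that every $M$-test is automatically a $\widetilde M$-test (smaller semi-measure means \emph{more} tests), hence $\mlr_{\widetilde M}\subseteq\mlr_M$; this is the trivial direction and also follows from Proposition~\ref{prop-universal-semi-measures}. The direction that needs work is $\mlr_M\subseteq\mlr_{\widetilde M}$, and your truncation argument as written actually goes the wrong way: starting from an $M$-test and producing a $\widetilde M$-test reproves the trivial inclusion.

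More importantly, the construction $\widetilde M(\sigma)=f(|\sigma|)\,M(\sigma)$ with $f(n)\to 0$ \emph{fails} the nontrivial inclusion outright. Recall (Corollary after Theorem~\ref{thm-EverythingMLR}) that $\mlr_M=2^\omega$, so you would need $\mlr_{\widetilde M}=2^\omega$ as well. But for any computable $X$ one has $\widetilde M(X{\uh}n)=f(n)\,M(X{\uh}n)\leq f(n)\to 0$, so choosing $n_i$ computably with $f(n_i)\leq 2^{-i}$ makes $(\{X{\uh}n_i\})_{i\in\omega}$ a $\widetilde M$-Martin-L\"of test capturing $X$. Thus no computable sequence (e.g.\ $0^\omega$) is $\widetilde M$-random, while every such sequence lies in $\mlr_M$. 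The length-dependent scaling destroys too much mass globally; there is no way to recover an $M$-bound from a $\widetilde M$-bound because strings in a $\widetilde M$-test can be arbitrarily long, making $M(\sigma)=\widetilde M(\sigma)/f(|\sigma|)$ uncontrollably large.

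The paper's fix is to make the scaling depend on the \emph{content} of $\sigma$ rather than its length: set $\rho(\sigma)=2^{-j}M(\sigma)$ where $j$ is the length of the initial run of $1$'s in $\sigma$. On each cone $\llb 1^j0\rrb$ the ratio $M/\rho$ is the fixed constant $2^j$, so a $\rho$-test restricted to that cone becomes an $M$-test after shifting indices, and one recovers $\mlr_M\subseteq\mlr_\rho\cup\{1^\omega\}$. The single exceptional point $1^\omega$ is then patched by adding a Dirac mass, $\widetilde M=\tfrac12\rho+\tfrac12\delta_{1^\omega}$. This content-based scaling is exactly what prevents the blow-up that sinks your length-based proposal.
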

\begin{proof}
First define a semi-measure $\rho$ by $\rho(\sigma)=2^{-j}M(\sigma)$, where $j$ is largest such that $1^j \preceq\sigma$.  The semi-measure $\rho$ is left-c.e., but it cannot be universal as there is no $c$ such that $c\cdot\rho(\sigma)\geq M(\sigma)$ for every $\sigma\in\str$.  

Consider a $\rho$-Martin-L\"of test $(T_i)_{i\in\omega}$.  For each $j\in\omega$ define an $M$-Martin-L\"of test $(S_i^j)_{i\in\omega}$ by $S_i^j = \{\sigma \in T_{i+j} : \sigma \succeq 1^j0\}$.  For each $\sigma\in S^j_i$, $\sigma=1^j0\tau$ for some $\tau\in\str$.  It follows that $M(\sigma)=M(1^j0\tau)=2^j\rho(1^j0\tau)=2^j\rho(\sigma)$.  Thus we have
\[
\sum_{\sigma\in S^j_i}M(\sigma)=2^j\sum_{\sigma\in S^j_i}\rho(\sigma)\leq 2^j\sum_{\sigma\in T_{i+j}}\rho(\sigma) \leq 2^j2^{-(i+j)}=2^{-i}.
\]
Clearly, every sequence containing a $0$ that is covered by $(T_i)_{i\in\omega}$ is covered by $(S_i^j)_{i\in\omega}$ for some~$j$.  Thus $\rho$ is almost the desired measure:  we have $\mlr_M \subseteq \mlr_\rho \cup \{1^\omega\}$.  Consider then the measure $\delta_{1^\omega}$ where $\delta_{1^\omega}(\sigma) = 1$ if $\sigma = 1^n$ for some $n \in \omega$ and $\delta_{1^\omega}(\sigma) = 0$ otherwise.  Let $\widetilde M = (1/2)\rho + (1/2)\delta_{1^\omega}$.  Then $\widetilde M$ is not universal, and $\mlr_{\widetilde M} \subseteq \mlr_M$ by Proposition~\ref{prop-universal-semi-measures}.  Finally, one easily checks that $\mlr_{\widetilde M} = \mlr_\rho \cup \mlr_{\delta_{1^\omega}} = \mlr_\rho \cup \{1^\omega\}$.  Hence $\mlr_M \subseteq \mlr_{\widetilde M}$ as well.
\end{proof}

Now we evaluate the adequacy of our definition in terms of the desiderata laid out in Section~\ref{subsec-desiderata}.  Clearly, this definition satisfies the condition of coherence.  Moreover, we can show that it also satisfies randomness preservation.

\begin{Theorem}\label{thm-rand-pres-1}
If $X\in\mlr$ and $\Phi$ is a Turing functional such that $X\in\dom(\Phi)$, then $\Phi(X)\in\mlr_{\lambda_\Phi}$.
\end{Theorem}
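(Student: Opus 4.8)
The plan is to mirror the proof of Theorem~\ref{thm-preservation-comp-sem} exactly, since the argument there never actually used computability of $\rho$---it only used that $\rho = \lambda_\Phi$ for the functional $\Phi$ and that the pullback of a $\rho$-test is a genuine Lebesgue Martin-L\"of test. Concretely, suppose $X \in \dom(\Phi)$ but $\Phi(X) \notin \mlr_{\lambda_\Phi}$; I will derive $X \notin \mlr$, which is the contrapositive of the claim. Writing $\rho = \lambda_\Phi$, by assumption there is a $\rho$-Martin-L\"of test $(U_i)_{i\in\omega}$ (a uniformly c.e.\ sequence of subsets of $\str$ with $\rho(U_i) \le 2^{-i}$) such that $\Phi(X) \in \bigcap_i \llb U_i \rrb$.

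First I would define the pullback sets
\[
\mathcal{V}_i = \bigcup_{\tau \in U_i} \llb \Phi^{-1}(\tau) \rrb,
\]
which are uniformly $\Sigma^0_1$ because each $U_i$ is c.e.\ uniformly in $i$ and $\Phi$ is a c.e.\ set of pairs, so $\Phi^{-1}(\tau)$ is c.e.\ uniformly in $\tau$. Next I would bound the Lebesgue measure of $\mathcal{V}_i$: using subadditivity of $\lambda$ together with the defining identity $\lambda(\llb \Phi^{-1}(\tau)\rrb) = \lambda_\Phi(\tau) = \rho(\tau)$ from Theorem~\ref{thm-MachinesInduceSemiMeasures}(i), I get
\[
\lambda(\mathcal{V}_i) \le \sum_{\tau \in U_i} \lambda(\llb \Phi^{-1}(\tau)\rrb) = \sum_{\tau \in U_i} \rho(\tau) = \rho(U_i) \le 2^{-i},
\]
so $(\mathcal{V}_i)_{i\in\omega}$ is a genuine (Lebesgue) Martin-L\"of test. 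Here I may harmlessly assume each $U_i$ is prefix-free, as noted in the discussion preceding the definition, so that $\sum_{\tau\in U_i}\rho(\tau)$ is exactly $\rho(U_i)$ and the subadditivity step is clean.

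Finally I would check that $X$ is captured by this test. Since $\Phi(X) \in \llb U_i \rrb$ for every $i$, there is for each $i$ some $\tau \preceq \Phi(X)$ with $\tau \in U_i$; as $X \in \dom(\Phi)$ and $\Phi^X \succeq \tau$, we have $X \in \llb \Phi^{-1}(\tau) \rrb \subseteq \mathcal{V}_i$. Hence $X \in \bigcap_i \mathcal{V}_i$, so $X \notin \mlr$, completing the contrapositive.

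I do not expect any genuine obstacle here: the only point requiring the slightest care is confirming that dropping the computability hypothesis on $\rho$ (which held in Theorem~\ref{thm-preservation-comp-sem}) does no damage. It does not, because left-c.e.-ness of $\rho$ is automatic from $\rho = \lambda_\Phi$, and the argument nowhere requires approximating $\rho$ from above or deciding membership---it only enumerates the $\mathcal{V}_i$ from below and estimates their measure from above via the exact identity $\lambda(\llb\Phi^{-1}(\tau)\rrb) = \rho(\tau)$. Thus the theorem is essentially a verbatim repetition of the earlier proof, now valid for all left-c.e.\ semi-measures rather than only computable ones.
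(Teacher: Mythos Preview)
Your proof is correct and follows essentially the same approach as the paper's own proof, which likewise pulls back a $\lambda_\Phi$-Martin-L\"of test through $\Phi$ to obtain a Lebesgue Martin-L\"of test capturing $X$. Your write-up is in fact slightly more careful in using subadditivity for the measure bound, but the argument is the same.
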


\begin{proof}
Suppose there is a $\lambda_\Phi$-Martin-L\"of test $(U_i)_{i\in\omega}$ such that $\Phi(X)\in\bigcap_{i\in\omega}\llb U_i\rrb$.  Then $(\llb\Phi^{-1}(U_i)\rrb)_{i\in\omega}$ is a uniform sequence of $\Sigma^0_1$ subsets of $\cs$, and
\[
\lambda(\llb\Phi^{-1}(U_i)\rrb)=\lambda_\Phi(U_i)\leq 2^{-i}
\]
for every $i$, so $(\llb\Phi^{-1}(U_i)\rrb)_{i\in\omega}$ is a Martin-L\"of test containing $X$.
\end{proof}

\begin{Remark}
Despite satisfying these two conditions, in general $\rho$-Martin-L\"of randomness fails to satisfy the No Randomness Ex Nihilo principle and the computable sequence condition.  First for the counterexample to the No Randomness Ex Nihilo principle, let $\rho$ be the semi-measure constructed in the proof of Theorem~\ref{thm-shen-counterexample}.  There we constructed functionals $\Phi$ and $\Psi$ inducing $\rho$ such that $\dom(\Phi)=\{\Omega\}$, $\dom(\Psi)=\{0^\omega\}$, and $\Phi(\Omega)=0^\omega=\Psi(0^\omega)$.  By Theorem~\ref{thm-rand-pres-1}, $0^\omega\in\mlr_\rho$.  However, $\Psi$ induces $\rho$ and yet maps no Martin-L\"of random sequence to $0^\omega$.  The same example provides a counterexample to the computable sequence condition:  $0^\omega$ is $\rho$-Martin-L\"of random, but $\inf_n\rho(0^n)=0$.
\end{Remark}

We can also construct a left-c.e.\ semi-measure $\rho$ that fails to satisfy the computable sequence condition in the strongest possible way:  $\rho$ has no atoms and yet $\mlr_\rho=\cs$.

\begin{Theorem}\label{thm-EverythingMLR}
There is a non-atomic left-c.e.\ semi-measure $\rho$ such that every $X \in 2^\omega$ is Martin-L\"{o}f random for $\rho$.
\end{Theorem}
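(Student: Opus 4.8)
The plan is to build $\rho$ directly as a left-c.e.\ semi-measure assembled from countably many \emph{point masses}, and to defeat every potential test by exploiting the fact that a left-c.e.\ approximation can never retract mass it has already enumerated. Concretely, I would represent $\rho$ as a sum of ``quanta'': a quantum of weight $w$ placed at a string $\tau$ means that $\rho(\sigma)$ is increased by $w$ for every nonempty $\sigma\preceq\tau$. Any effective enumeration of rational-weighted quanta with summable total weight yields a left-c.e.\ semi-measure, and such a semi-measure is automatically non-atomic: if the total planted weight is finite, then for each $X$ the index sets $A_n=\{\,k:\tau_k\succeq X\uh n\,\}$ decrease to $\varnothing$ (no finite string extends all prefixes of $X$), so $\lim_n\rho(X\uh n)=\lim_n\sum_{k\in A_n}w_k=0$.

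The key idea is to use the quanta \emph{reactively}. Fix an effective enumeration $(U^e_i)_{e,i\in\omega}$ of all uniformly c.e.\ sequences of subsets of $\str$, so that every $\rho$-Martin-L\"of test occurs as some $(U^e_i)_i$; by the prefix-free normalization of tests discussed in Section~\ref{sec-computable-semi-measures} we may assume each $U^e_i$ is prefix-free. To the $e$-th candidate test assign the single \emph{dedicated level} $i_e:=e+3$ and weight $w_e:=2^{-(e+2)}$, noting $w_e>2^{-i_e}$ and $\sum_e w_e\le 1/2$. The construction watches the enumeration of $U^e_{i_e}$, and the first time (if ever) a \emph{nonempty} string $\tau$ enters $U^e_{i_e}$, I plant a quantum of weight $w_e$ at $\tau$. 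Finally I set $\rho(\emptystr):=1$; since $\rho(0)+\rho(1)\le\sum_e w_e\le 1/2\le 1$, this preserves super-additivity, and as only the root value changes it affects neither non-atomicity nor any test at a level $i\ge 1$.

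It then remains to check that every $X$ is $\rho$-random. Let $(U^e_i)_i$ be any $\rho$-Martin-L\"of test. If some nonempty $\tau$ ever entered $U^e_{i_e}$, then $\rho(U^e_{i_e})\ge\rho(\tau)\ge w_e>2^{-i_e}$, contradicting the test condition; and if $U^e_{i_e}$ is the prefix-free set $\{\emptystr\}$, then $\rho(U^e_{i_e})=\rho(\emptystr)=1>2^{-i_e}$, again a contradiction. Hence $U^e_{i_e}=\varnothing$, so $\bigcap_i\llb U^e_i\rrb\subseteq\llb U^e_{i_e}\rrb=\varnothing$ and every $X$ passes the test. Crucially, planting quanta only ever \emph{increases} $\rho$, which can only make the measure bounds harder to satisfy; it can never turn an empty level nonempty nor create a valid covering test, so strategies never interfere destructively.

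The main obstacle is conceptual rather than technical, and lies in resolving an apparent paradox. For a \emph{computable} $X$ and any non-atomic $\rho$ one is tempted to cover $X$ by a singleton test $U_i=\{X\uh n_i\}$ with $\rho(X\uh n_i)$ small, which would make $X$ non-random and contradict the theorem. The construction explains why this cannot be carried out against a left-c.e.\ $\rho$: the would-be coverer has some index $e$, and the moment it commits any string to its dedicated level $i_e$---a commitment it can never withdraw---I raise that string's $\rho$-mass above the budget $2^{-i_e}$, invalidating the test at that one level while leaving deeper prefixes untouched, so non-atomicity survives. Thus the delicate point is exactly to see that each test can be sabotaged at a single, index-dependent level using irreversibly planted mass, with geometrically small weights summing below the total available mass; once this is in place, the semi-measure axioms, left-c.e.-ness, and non-atomicity are all routine.
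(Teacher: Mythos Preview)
Your proof is correct and follows essentially the same approach as the paper: both arguments assign to each candidate test $(U^e_i)_i$ a single dedicated level (the paper uses $e+2$, you use $e+3$) and a weight exceeding that level's budget (the paper uses $2^{-e-1}$, you use $2^{-(e+2)}$), then wait for the first string to appear at that level and plant the weight along its prefixes, thereby invalidating any test with a nonempty dedicated level. Your explicit treatment of non-atomicity via the ``quanta'' formulation and dominated convergence makes transparent a point the paper leaves implicit, but the underlying construction is the same.
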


\begin{proof}
Let $(E^e_n)_{\la e,n\ra \in \omega}$ be an effective list of all uniformly c.e.\ sequences of subsets of $2^{<\omega}$.  We satisfy the requirements
\begin{align*}
\MP{R}_e \colon \bigcap_{n \in \omega}\llb E^e_n\rrb \neq \emptyset \imp (\exists n \in \omega)(\rho(E^e_n) > 2^{-n}).
\end{align*}
Satisfying all of these requirements ensures that if $(E_n)_{n\in\omega}$ defines a $\rho$-Martin-L\"of test, then $\bigcap_{n \in \omega}\llb E^e_n\rrb = \emptyset$.  Therefore every $X \in 2^\omega$ is $\rho$-Martin-L\"of random.

For each~$e$, we build a left-c.e.\ semi-measure $\rho_e$ (were we relax the requirement $\rho_e(\emptystr) = 1$ to $\rho_e(\emptystr) \leq 1$) as follows.
\begin{itemize}
\item Start with $\rho_e(\sigma)=0$ for all~$\sigma$.
\item If at some stage some $\tau$ enters $E^e_{e+2}$, set $\rho_e(\tau')=2^{-e-1}$ for all prefixes of~$\tau$ (including $\tau$ itself) and finish the construction.
\end{itemize}

Clearly, $\rho_e$ is a left-c.e.\ semi-measure such that $\rho_e(E^e_{e+2}) > 2^{-e-2}$ if $E^e_{e+2} \not= \emptyset$, and $\rho_e(\emptystr) \leq 2^{-e-1}$. Thus, define $\rho$ by $\rho(\emptystr) = 1$ and $\rho(\sigma) = \sum_{e \in \omega} \rho_e(\sigma)$ for all $\sigma$ with $|\sigma|>0$.  Then $\rho$ is a left-c.e.\ semi-measure such that  $\rho(E^e_{e+2}) > 2^{-e-2}$ if $E^e_{e+2} \not= \emptyset$.
\end{proof}

Note that Proposition \ref{prop-universal-semi-measures} and Theorem \ref{thm-EverythingMLR} together imply the following.

\begin{Corollary}
$\mlr_M=\cs$. 
\end{Corollary}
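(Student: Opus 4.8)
The plan is to combine the two results immediately preceding this corollary. Proposition~\ref{prop-universal-semi-measures} establishes that the universal left-c.e.\ semi-measure $M$ collects all the randoms, namely $\mlr_M = \bigcup_{\rho \in \mathcal{S}} \mlr_\rho$, where $\mathcal{S}$ is the collection of all left-c.e.\ semi-measures. Theorem~\ref{thm-EverythingMLR} produces a \emph{particular} left-c.e.\ semi-measure $\rho$ for which $\mlr_\rho = \cs$. The corollary then follows by a one-line containment argument.

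First I would note that $\mlr_M \subseteq \cs$ trivially, since $\mlr_M$ is by definition a subset of Cantor space. For the reverse inclusion, let $\rho$ be the non-atomic left-c.e.\ semi-measure constructed in Theorem~\ref{thm-EverythingMLR}, so that $\mlr_\rho = \cs$. Since $\rho \in \mathcal{S}$, Proposition~\ref{prop-universal-semi-measures} gives
\[
\cs = \mlr_\rho \subseteq \bigcup_{\rho' \in \mathcal{S}} \mlr_{\rho'} = \mlr_M.
\]
Combining the two inclusions yields $\mlr_M = \cs$.

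There is essentially no obstacle here, as both ingredients are already proved; the only thing to verify is that the semi-measure furnished by Theorem~\ref{thm-EverythingMLR} genuinely belongs to the class $\mathcal{S}$ quantified over in Proposition~\ref{prop-universal-semi-measures}, which is immediate from its construction as a left-c.e.\ semi-measure. Thus the proof amounts to citing the two preceding results and chaining the inclusions.
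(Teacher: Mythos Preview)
Your proposal is correct and matches the paper's approach exactly: the paper simply notes that Proposition~\ref{prop-universal-semi-measures} and Theorem~\ref{thm-EverythingMLR} together imply the corollary, and your argument spells out precisely this inclusion chain.
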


In light of the fact that $\rho$-Martin-L\"of randomness does not always satisfy the desiderata from Section \ref{subsec-desiderata}, we consider other definitions of randomness for a left-c.e.\ semi-measure.

\subsection{Weak 2-randomness with respect to a left-c.e.\ semi-measure}\label{subsec-w2r}

We can obtain the definition of weak 2-randomness for a left-c.e.\ semi-measure by modifying the notion of a generalized Martin-L\"of test.

\begin{Definition}
Let $\rho$ be a left-c.e.\ semi-measure. 
\begin{itemize} 
\item[(i)]  A \emph{generalized $\rho$-Martin-L\"of test} is a sequence $(U_i)_{i\in\omega}$ of uniformly c.e.\ subsets of $\cs$ such that for each $i$,
\[
\lim_{i\rightarrow\infty}\rho(U_i)=0.
\]

\item[(ii)] $X\in\cs$ passes the generalized $\rho$-Martin-L\"of test $(U_i)_{i\in\omega}$ if $X\notin\bigcap_{i \in \omega}\llb U_i\rrb$.

\medskip

\item[(iii)] $X\in\cs$ is \emph{$\rho$-weakly 2-random}, denoted $X\in\wtwor_\rho$, if $X$ passes every generalized $\rho$-Martin-L\"of test.
\end{itemize}
\end{Definition}

Weak 2-randomness for a left-c.e.\ semi-measure is more well-behaved than the previous definition considered in this section, as it satisfies both randomness preservation and the computable sequence condition.

\begin{Theorem}
Let $\rho$ be a left-c.e.\ semi-measure, and let $\Phi$ be a Turing functional that induces~$\rho$.  Then for every $X \in\wtwor\cap\dom(\Phi)$, $\Phi(X)\in\wtwor_\rho$.
\end{Theorem}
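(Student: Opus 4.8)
The plan is to mimic the proof of randomness preservation for computable semi-measures (Theorem~\ref{thm-preservation-comp-sem}), but to track the contrapositive through the weaker notion of a \emph{generalized} test instead of a standard Martin-L\"of test. So suppose $X \in \dom(\Phi)$ and $\Phi(X) \notin \wtwor_\rho$. Then there is a generalized $\rho$-Martin-L\"of test $(U_i)_{i\in\omega}$ with $\Phi(X) \in \bigcap_{i\in\omega} \llb U_i \rrb$ and $\lim_{i} \rho(U_i) = 0$. The goal is to pull this test back through $\Phi$ to a generalized ($\lambda$-)Martin-L\"of test that captures $X$, thereby contradicting $X \in \wtwor$.

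First I would define the pullback sets
\[
\mathcal{V}_i = \bigcup_{\tau \in U_i} \llb \Phi^{-1}(\tau) \rrb,
\]
exactly as in the proof of Theorem~\ref{thm-preservation-comp-sem}. Since each $U_i$ is c.e.\ and $\Phi^{-1}$ is uniformly c.e., the sequence $(\mathcal{V}_i)_{i\in\omega}$ is uniformly $\Sigma^0_1$. The same measure computation as before gives
\[
\lambda(\mathcal{V}_i) \leq \sum_{\tau \in U_i} \lambda(\llb \Phi^{-1}(\tau) \rrb) = \sum_{\tau \in U_i} \rho(\tau) = \rho(U_i),
\]
using that $\lambda(\llb \Phi^{-1}(\tau) \rrb) = \lambda_\Phi(\tau) = \rho(\tau)$ and that we may assume each $U_i$ prefix-free. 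Taking the limit, $\lim_i \lambda(\mathcal{V}_i) \leq \lim_i \rho(U_i) = 0$, so $(\mathcal{V}_i)_{i\in\omega}$ is a \emph{generalized} Martin-L\"of test (with respect to $\lambda$). Finally, the capture argument carries over verbatim: since $\Phi(X) \in \llb U_i \rrb$ for each $i$, there is some $\tau \preceq \Phi(X)$ with $\tau \in U_i$, whence $X \in \llb \Phi^{-1}(\tau) \rrb \subseteq \mathcal{V}_i$; thus $X \in \bigcap_i \mathcal{V}_i$, so $X \notin \wtwor$.

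I do not expect a genuine obstacle here, since the structure is identical to Theorem~\ref{thm-preservation-comp-sem}; the only point requiring a moment's care is that the defining property of a generalized test is the limit condition $\lim_i \rho(U_i) = 0$ rather than the uniform bound $\rho(U_i) \leq 2^{-i}$. One must therefore check that this limit condition is preserved under pullback, which it is because the inequality $\lambda(\mathcal{V}_i) \leq \rho(U_i)$ holds termwise and limits respect $\leq$. Everything else---the use of the semi-measure identity $\lambda_\Phi = \rho$ and the compactness-free capture of $X$---is exactly as in the computable case, so the computable-measure proof transfers with essentially no new ideas.
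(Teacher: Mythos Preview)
Your proposal is correct and matches the paper's approach exactly: the paper simply states that the proof is nearly identical to that of Theorem~\ref{thm-rand-pres-1} (itself essentially Theorem~\ref{thm-preservation-comp-sem}), which is precisely the pullback-of-tests argument you wrote out, with the only change being that the limit condition $\lim_i \rho(U_i)=0$ replaces the bound $\rho(U_i)\leq 2^{-i}$.
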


\begin{proof}
The proof is nearly identical to the proof of Theorem \ref{thm-rand-pres-1}.
\end{proof}

\begin{Proposition}
Let $\rho$ be a left-c.e.\ semi-measure.  Suppose that $X$ is computable and that $X\in\wtwor_\rho$.  Then $\inf_n\rho(X\uh n)>0$.
\end{Proposition}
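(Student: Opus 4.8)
The plan is to prove the contrapositive: assuming $X$ is computable and $\inf_n \rho(X \uh n) = 0$, I would construct a generalized $\rho$-Martin-L\"of test that captures $X$, thereby showing $X \notin \wtwor_\rho$. The natural test is to let $U_i = \{X \uh n_i\}$ for a suitably chosen increasing sequence of lengths $n_i$. Since $X$ is computable, each singleton set $\{X \uh n\}$ is computable (hence c.e.), and $X \in \llb X \uh n \rrb$ for every $n$, so $X \in \bigcap_i \llb U_i \rrb$ automatically. The only thing left to arrange is that $\rho(U_i) = \rho(X \uh n_i) \to 0$ as $i \to \infty$, together with the uniformity required for a generalized test.

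The hypothesis $\inf_n \rho(X \uh n) = 0$ gives, for each $i$, the existence of some $n$ with $\rho(X \uh n) < 2^{-i}$. The subtlety is that $\rho$ is only \emph{left-c.e.}, so we cannot in general decide when $\rho(X \uh n)$ has dropped below a given threshold; we can only approximate $\rho$ from below. Here is where I would use computability of $X$ crucially. Since the sequence $(\rho(X \uh n))_{n \in \omega}$ is non-increasing (as $\rho(\sigma) \geq \rho(\sigma 0) + \rho(\sigma 1) \geq \rho(\sigma X(|\sigma|))$ along $X$) and has infimum $0$, for each $i$ there genuinely exists $n_i$ with $\rho(X \uh n_i) < 2^{-i}$. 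I would search, using the lower approximation $\tilde\rho$, for a length $n$ and stage $s$ witnessing $\tilde\rho(X \uh n, s)$ small; but since lower approximations cannot certify smallness, I instead argue semantically: define $U_i$ to be the singleton $\{X \uh n_i\}$ where $n_i$ is the least length with $\rho(X \uh n_i) < 2^{-i}$. The map $i \mapsto n_i$ need not be computable, but the \emph{test} $(U_i)_{i \in \omega}$ only needs to be uniformly c.e.\ as a sequence of sets of strings, and each $U_i$ is a finite (indeed singleton) set, so uniform c.e.-ness reduces to being able to enumerate these singletons uniformly.

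The main obstacle is therefore producing the test \emph{uniformly and effectively} despite $i \mapsto n_i$ being potentially non-computable. I would resolve this by not insisting on the least such $n_i$: instead, using that $X$ is computable, enumerate into $U_i$ the string $X \uh n$ for the first $n$ found (in a dovetailed search over $n$ and approximation stages $s$) such that $\tilde\rho(X \uh n, s) \leq 2^{-i}$ \emph{and} — to force $\rho(X \uh n)$ itself below a target — one exploits monotonicity to bound the true value. Concretely, since $\rho(X \uh n) \to 0$, for large enough $n$ the true value is below $2^{-i-1}$, and then the lower approximation $\tilde\rho(X \uh n, s)$ is at most $2^{-i}$ for all $s$; so the search for a length $n$ with $\tilde\rho(X \uh n, s) \leq 2^{-i}$ at \emph{some} stage $s$ will succeed, and any $n$ it returns satisfies $\rho(X \uh n) \geq \tilde\rho(X \uh n, s)$ but we need an \emph{upper} bound. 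To get the upper bound cleanly, I would run the search to find $n$ minimal such that $\tilde\rho(X \uh n, s) \le 2^{-i}$ is compatible, but since this only certifies the approximation is small, I would instead define the test using the approximations directly: set $V_i = \{ X \uh n : \exists s\, [\tilde\rho(X\uh n,s) \le 2^{-i}] \}$, which is c.e.\ uniformly in $i$ (as $X$ is computable and $\tilde\rho$ is computable), contains $X \uh n$ for all sufficiently large $n$, hence contains $X$, and satisfies $\rho(V_i) \le \sup_n \rho(X \uh n) \cdot [\text{over the min element}]$. After taking $U_i$ to be just the shortest string enumerated into $V_i$, one gets a genuine generalized test with $\rho(U_i) \to 0$, completing the argument.
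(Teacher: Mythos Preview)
You have the right idea but have badly overcomplicated it, and your final construction does not work. You yourself observe that $\rho(X\uh n)$ is non-increasing in $n$ (by the semi-measure inequality) and has infimum $0$; these two facts together immediately give $\lim_{n\to\infty}\rho(X\uh n)=0$. Now recall that a \emph{generalized} $\rho$-Martin-L\"of test only requires $\lim_i \rho(U_i)=0$, not $\rho(U_i)\le 2^{-i}$. So there is no need to search for special lengths $n_i$: simply set $U_i=\{X\uh i\}$. Since $X$ is computable this sequence is uniformly c.e., it captures $X$, and $\rho(U_i)=\rho(X\uh i)\to 0$. This is exactly the paper's one-line proof.

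Your attempted workaround via $V_i=\{X\uh n: (\exists s)\,\tilde\rho(X\uh n,s)\le 2^{-i}\}$ has a genuine gap. Because $\tilde\rho(\sigma,s)$ is non-decreasing in $s$, the condition $(\exists s)\,\tilde\rho(X\uh n,s)\le 2^{-i}$ is equivalent to $\tilde\rho(X\uh n,0)\le 2^{-i}$, and this places no upper bound whatsoever on $\rho(X\uh n)$. For instance, if the approximation satisfies $\tilde\rho(\sigma,0)=0$ for every $\sigma$ (which is perfectly legal), then $V_i$ contains $X\uh n$ for every $n$, the shortest string in $V_i$ is $\emptystr$, and your $U_i$ has $\rho(U_i)=\rho(\emptystr)=1$ for all $i$. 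So the test fails. The fix is not to repair the search, but to drop it entirely: take $n_i=i$.
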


\begin{proof}
Suppose that $X$ is computable and $\inf_n\rho(X\uh n)=0$.  Then setting $U_i=\{X\uh i\} $ for each $i\in\omega$ yields a generalized $\rho$-Martin-L\"of test capturing $X$.
\end{proof}

Clearly, $\wtwor_\rho\subseteq\mlr_\rho$, but for some semi-measures $\rho$ (such as any $\rho$ such that $\rho$-Martin-L\"of randomness violates the computable sequence condition), the inclusion is strict.  We should note further that the universal left-c.e.\ semi-measure $M$ is universal for weak 2-randomness, as is the non-universal $\widetilde M$ from Proposition \ref{prop-NotUniversalButAllRandoms}:
\[
\wtwor_{\widetilde M} = \wtwor_M =\bigcup_{\rho\in\mathcal{S}}\wtwor_\rho.
\]

We have seen that weak 2-randomness for a semi-measure satisfies coherence, randomness preservation, and the computable sequence condition, but we currently do not know whether it satisfies the No Randomness Ex Nihilo principle.  We will return to this question at the end of Section \ref{section-trimming}.

We now turn to another general approach to defining randomness with respect to a semi-measure, an approach that is found implicitly in \cite{LevVyu77} and \cite{Vyu82}.

\section{Trimming a semi-measure back to a measure}\label{section-trimming}

We can also define randomness with respect to a semi-measure by trimming back our semi-measure to a measure and then considering the sequences that are random with respect to the resulting measure.  

\subsection{Definition of a derived measure and examples}

To better understand this approach, it is helpful to think of a semi-measure as a network flow through the full binary tree $\str$ seen as a directed graph.  We initially give the node at the root of the tree flow equal to $1$, which implies that $\rho(\emptystr)=1$. Some amount of this flow at each node $\sigma$ is passed along to the node corresponding to $\sigma0$, some is passed along to the node corresponding to $\sigma1$, and, potentially, some of the flow is lost, yielding the condition that $\rho(\sigma)\geq\rho(\sigma0)+\rho(\sigma1)$.  

We obtain a measure $\overline\rho$ from $\rho$ if we ignore all of the flow that is lost and just consider the behavior of the flow that never leaves the network. We will refer to $\overline\rho$ as the measure derived from $\rho$.  This can be formalized as follows.

\begin{Definition}\cite{LevVyu77}
Let $\rho$ be a semi-measure.  
\[
\overline\rho(\sigma):=\inf_{n\geq |\sigma|}\sum_{\tau\succeq\sigma\;\&\;|\tau|=n}\rho(\tau) = \lim_{n\rightarrow \infty} \sum_{\tau\succeq\sigma\;\&\;|\tau|=n}\rho(\tau).
\]
\end{Definition}
\noindent The fact that one can use either $\inf$ or $\lim$ in the expression is due to the fact that the term $\sum_{\tau\succeq\sigma\;\&\;|\tau|=n}\rho(\tau)$ is non-increasing in~$n$ by the semi-measure inequality $\rho(\tau) \geq \rho(\tau0)+\rho(\tau1)$.

The following are two simple examples illustrating the different behaviors of $\rho$ and $\overline\rho$.
\begin{Example}
Let $\rho(\sigma)=4^{-|\sigma|}$ for every $\sigma\in\str$.  Then for each $\sigma\in\str$ and each $n\in\omega$,
\[
\sum_{\tau\succeq\sigma\;\&\;|\tau|=n}\rho(\tau)=2^{n-|\sigma|}4^{-n}=2^{-n}2^{-|\sigma|}.
\]
Thus $\overline\rho(\sigma)=0$ for every $\sigma\in\str$.
\end{Example}
\begin{Example}
Let $\rho$ be a semi-measure such that $\rho(\sigma)=\dfrac{1}{2}\lambda(\sigma)+\dfrac{1}{2^{2|\sigma|+1}}$. Then for each $\sigma\in\str$ and each $n\in\omega$,
\[
\begin{split}
\sum_{\tau\succeq\sigma\;\&\;|\tau|=n}\rho(\tau)&=2^{n-|\sigma|}\Bigl(\dfrac{1}{2}\lambda(\tau)+\dfrac{1}{2^{2|\tau|+1}}\Bigr)\\
&=2^{n-|\sigma|}\Bigl(\dfrac{1}{2}2^{-n}+\dfrac{1}{2^{2n+1}}\Bigr)\\
&=\dfrac{1}{2}2^{-|\sigma|}+\dfrac{2^{n-|\sigma|}}{2^{2n+1}}\\
&=\dfrac{1}{2}\lambda(\sigma)+2^{-(n+1)}\lambda(\sigma).
\end{split}
\]
Thus $\overline\rho(\sigma)=\dfrac{1}{2}\lambda(\sigma)$ for every $\sigma\in\str$.
\end{Example}
\noindent This latter example yields what we will refer to as a \emph{Lebesgue-like} semi-measure.
\begin{Definition}
A semi-measure $\rho$ is \emph{Lebesgue-like} if there is some $\alpha\in(0,1]$ such that
\[
\overline\rho=\alpha\cdot\lambda.
\]
\end{Definition}

Let us now show that $\overline{\rho}$ is indeed a measure which enjoys some nice properties, both from the analytic viewpoint and in connection with Turing functionals. The following proposition is probably folklore; an explicit reference is hard to find in the literature.

\begin{Proposition}\label{prop-rhobar-properties}
Let $\rho$ be a semi-measure and $\overline{\rho}$ be defined as above. Then $\overline{\rho}$ is the largest measure $\mu$ such that $\mu\leq \rho$.  In particular, if $\rho$ is a measure, then $\overline\rho=\rho$. Moreover, if
\[
\rho(\sigma)=\lambda(\{X:\Phi^X\succeq\sigma\}),
\]
then
\[
\overline\rho(\sigma)=\lambda(\{X \in \dom(\Phi): \Phi^X\succeq\sigma\}).
\]
(Thus, trimming $\rho$ back to $\overline\rho$ amounts to restricting the Turing functional $\Phi$ that induces $\rho$ to those inputs on which $\Phi$ is total.)
\end{Proposition}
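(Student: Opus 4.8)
The plan is to prove the three assertions in sequence, using throughout the monotonicity already recorded after the definition: for fixed $\sigma$ the quantity $s_n(\sigma):=\sum_{\tau\succeq\sigma\;\&\;|\tau|=n}\rho(\tau)$ is non-increasing in $n$, so that $\overline\rho(\sigma)=\lim_n s_n(\sigma)=\inf_{n\ge|\sigma|}s_n(\sigma)$ is well-defined.

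First I would verify that $\overline\rho$ is a measure (understood here as a Borel measure of total mass at most $1$, since $\overline\rho(\emptystr)$ may be strictly less than~$1$; note that if one insists on total mass exactly $1$, then e.g.\ for $\rho(\sigma)=4^{-|\sigma|}$ there is no measure below $\rho$ at all). The only non-trivial point is additivity: for $n>|\sigma|$ the length-$n$ extensions of $\sigma$ split into those extending $\sigma0$ and those extending $\sigma1$, whence $s_n(\sigma)=s_n(\sigma0)+s_n(\sigma1)$; letting $n\to\infty$ and using that all three limits exist gives $\overline\rho(\sigma)=\overline\rho(\sigma0)+\overline\rho(\sigma1)$. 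Taking $n=|\sigma|$ in the infimum gives $\overline\rho(\sigma)\le\rho(\sigma)$, so $\overline\rho\le\rho$.

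Next, for maximality, let $\mu$ be any measure with $\mu\le\rho$. Iterating additivity of $\mu$ yields $\mu(\sigma)=\sum_{\tau\succeq\sigma\;\&\;|\tau|=n}\mu(\tau)\le s_n(\sigma)$ for every $n\ge|\sigma|$, and taking the infimum over $n$ gives $\mu(\sigma)\le\overline\rho(\sigma)$. Thus $\overline\rho$ is the largest measure below $\rho$. The special case then follows immediately: if $\rho$ is itself a measure, then $\rho\le\rho$ forces $\rho\le\overline\rho\le\rho$, so $\overline\rho=\rho$ (equivalently, additivity of $\rho$ makes $s_n(\sigma)=\rho(\sigma)$ constant in $n$).

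Finally I turn to the functional characterization, which I expect to be the main point. With $\rho(\sigma)=\lambda(\{X:\Phi^X\succeq\sigma\})$, the key observation is that for fixed $n$ the sets $\{X:\Phi^X\succeq\tau\}$ over strings $\tau\succeq\sigma$ of length $n$ are pairwise disjoint, since a single sequence cannot extend two distinct strings of equal length. Hence their measures add up to $\lambda(A_n)$, where
\[
A_n=\{X:\Phi^X\succeq\sigma\ \text{and}\ |\Phi^X|\ge n\};
\]
indeed $X\in A_n$ precisely when $\Phi^X\uh n$ is a length-$n$ extension of $\sigma$, so $s_n(\sigma)=\lambda(A_n)$. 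The sets $A_n$ are $\Sigma^0_1$, hence measurable, and they decrease with $n$ to $\bigcap_{n\ge|\sigma|}A_n=\{X\in\dom(\Phi):\Phi^X\succeq\sigma\}$, because demanding $|\Phi^X|\ge n$ for all $n$ is exactly the requirement $\Phi^X\in\cs$. The only remaining work is to invoke downward continuity of the finite measure $\lambda$ along the decreasing sequence $(A_n)$, giving $\overline\rho(\sigma)=\lim_n\lambda(A_n)=\lambda(\bigcap_n A_n)=\lambda(\{X\in\dom(\Phi):\Phi^X\succeq\sigma\})$. The main obstacle is thus not any single hard step but getting the bookkeeping right---correctly identifying $A_n$ and its limiting intersection with $\dom(\Phi)$, and confirming that continuity from above applies.
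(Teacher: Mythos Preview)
Your proposal is correct and follows essentially the same route as the paper: additivity of $\overline\rho$ via the split $s_n(\sigma)=s_n(\sigma0)+s_n(\sigma1)$, maximality via $\mu(\sigma)=\sum_{|\tau|=n,\tau\succeq\sigma}\mu(\tau)\le s_n(\sigma)$, and the functional part by identifying $s_n(\sigma)$ with $\lambda(\{X:\Phi^X\succeq\sigma,\ |\Phi^X|\ge n\})$ and applying downward continuity of $\lambda$ along the decreasing sets to hit $\dom(\Phi)\cap\{X:\Phi^X\succeq\sigma\}$. The paper's argument is organized slightly differently (it names the level sets $\mathcal{D}_n=\{X:|\Phi^X|\ge n\}$ and works with $\mathcal{D}_n\cap\{X:\Phi^X\succeq\sigma\}$), but these are your $A_n$ and the logic is the same.
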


\begin{proof}
The fact that $\overline{\rho}$ is a measure is clear from the definition $\overline\rho(\sigma) = \lim_{n \rightarrow \infty}\sum_{\tau\succeq\sigma\;\&\;|\tau|=n}\rho(\tau)$, since we then have
\[
\overline\rho(\sigma0)+\overline\rho(\sigma1) = \lim_{n \rightarrow \infty}\sum_{\tau\succeq\sigma0\;\&\;|\tau|=n}\rho(\tau) + \lim_{n\rightarrow\infty}\sum_{\tau\succeq\sigma1\;\&\;|\tau|=n}\rho(\tau) =  \lim_{n \rightarrow \infty}\sum_{\tau\succeq\sigma\;\&\;|\tau|=n}\rho(\tau) = \overline\rho(\sigma).
\]
Now, if $\mu$ is a measure such that $\mu(\sigma) \leq \rho(\sigma)$ for all~$\sigma$, then for any given $\sigma$:
\[
\mu(\sigma) = \inf_{n\geq |\sigma|}\sum_{\tau\succeq\sigma\;\&\;|\tau|=n}\mu(\tau) \leq \inf_{n\geq |\sigma|}\sum_{\tau\succeq\sigma\;\&\;|\tau|=n}\rho(\tau) = \overline{\rho}(\sigma)
\]
(for the first equality, we used the measure property $\mu(\tau)=\mu(\tau0)+\mu(\tau1)$).  

Suppose now that $\rho$ is induced by some Turing functional~$\Phi$, i.e., $\rho(\sigma)=\lambda(\{X:\Phi^X\succeq\sigma\})$ for all~$\sigma$. Set $\mu(\sigma)= \lambda(\{X \in \dom(\Phi) :\Phi^X\succeq\sigma\})$. 

Let $\mathcal{D}_n$ be the set of $X$ such that $\Phi^X$ is of length at least~$n$. The sets $\mathcal{D}_n$ are 

non-increasing in~$n$. Moreover, $\dom(\Phi) = \bigcap_{n \in \omega} \mathcal{D}_n$. Therefore, for all $\sigma$:
\[
\mu(\sigma) = \lim_{n \rightarrow \infty} \lambda (\{X \in \mathcal{D}_n : \Phi^X \succeq \sigma\}).
\]
By definition, for $n \geq |\sigma|$, 
\[
\lambda (\{X \in \mathcal{D}_n : \Phi^X \succeq \sigma\}) = \sum_{\tau\succeq\sigma\;\&\;|\tau|=n}\rho(\tau).
\]
Putting the two together, 
\[
\mu(\sigma) = \lim_{n \rightarrow \infty}   \sum_{\tau\succeq\sigma\;\&\;|\tau|=n}\rho(\tau) = \overline{\rho}(\sigma)
 \]
 as wanted.
\end{proof}


\subsection{The complexity of $\overline\rho$}

We now show that for a given left-c.e.\ semi-measure $\rho$, $\overline\rho$ can encode a lot of information.  More precisely, for any $\emptyset'$-right-c.e.\ real $\alpha$ (i.e.\ $\alpha$ is the limit of a $\emptyset'$-computable non-increasing sequence of rationals), we code $\alpha$ into the values of $\overline\rho$ for some left-c.e.\ semi-measure $\rho$.  Further, we can even make $\overline\rho$ Lebesgue-like, as shown by the next theorem (the equivalence of $(1)$, $(2)$ and $(3)$ is well-known but it is hard to find a reference for this result, so we include the proof for completeness).

\begin{Theorem}\label{thm-complexity-of-rhobar}
The following are equivalent for $\alpha\in[0,1]$.
	\begin{enumerate}
	\item $\alpha$ is $\jump$-right c.e.
	\item $\alpha = \limsup_n q_n$ for a computable sequence of rationals $(q_n)_{n\in\omega}$.
	\item $\alpha = \inf r_n$ where $(r_n)_{n\in\omega}$ is a uniform sequence of left-c.e.\ reals.
	\item There is a left-c.e.\ semi-measure $\rho$ such that $\br{\rho}=\alpha\cdot\lambda$.
	\end{enumerate}
\end{Theorem}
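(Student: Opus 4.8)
The plan is to establish the single cycle of implications $(1)\Rightarrow(2)\Rightarrow(4)\Rightarrow(3)\Rightarrow(1)$, which settles all four equivalences at once. The equivalence of the three arithmetical conditions $(1)$, $(2)$, $(3)$ is folklore and uses only the limit lemma and an inspection of quantifier complexity; the genuinely new content is the interaction with the derived measure, which I handle through the two implications $(2)\Rightarrow(4)$ and $(4)\Rightarrow(3)$. I deliberately pass through the $\limsup$-representation $(2)$ rather than the $\inf$-representation $(3)$ when building the semi-measure, for the reason explained at the end.

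First the two short arithmetical steps. For $(3)\Rightarrow(1)$, write each $r_n$ as the increasing limit of computable rationals $r_{n,s}$; then for rational $q$ we have $q>\alpha$ iff $\exists n\,(r_n<q)$ iff $\exists n\,\exists q'\in\mathbb{Q}\,\bigl(q'<q \wedge \forall s\,(r_{n,s}\le q')\bigr)$, which is $\Sigma^0_2$, so $\{q:q>\alpha\}$ is $\jump$-c.e.\ and $\alpha$ is $\jump$-right-c.e. For $(1)\Rightarrow(2)$, write $\alpha=\lim_s p_s$ with $(p_s)$ a $\jump$-computable non-increasing sequence of rationals, apply the limit lemma to obtain a computable $f(s,t)\in[0,1]\cap\mathbb{Q}$ with $\lim_t f(s,t)=p_s$, and check that a suitable computable rearrangement of the values $f(s,t)$ has $\limsup$ equal to $\inf_s p_s=\alpha$ (for instance $\min_{s\le t} f(s,t)$ already forces $\limsup\le\alpha$, and one arranges the values near $\alpha$ to recur); this bookkeeping is routine and standard, so I will only sketch it.

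The heart of the argument is $(2)\Rightarrow(4)$. Given $\alpha=\limsup_n q_n$ with $(q_n)$ computable rationals, I may assume $q_n\in[0,1]$ by replacing $q_n$ with $\max(0,\min(1,q_n))$, which leaves the $\limsup$ unchanged since $\alpha\in[0,1]$. Put $r_N=\sup_{n\ge N} q_n$. Then $(r_N)_N$ is non-increasing, lies in $[0,1]$, satisfies $\inf_N r_N=\alpha$, and is uniformly left-c.e., being the non-decreasing limit of the computable rationals $\max_{N\le n\le M} q_n$ as $M\to\infty$. Now define a semi-measure by $\rho(\emptystr)=1$ and $\rho(\sigma)=r_{|\sigma|}\,2^{-|\sigma|}$ for $|\sigma|\ge 1$. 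It is a semi-measure: for $|\sigma|\ge 1$ one has $\rho(\sigma0)+\rho(\sigma1)=r_{|\sigma|+1}2^{-|\sigma|}\le r_{|\sigma|}2^{-|\sigma|}=\rho(\sigma)$ because $r_{|\sigma|+1}\le r_{|\sigma|}$, and $\rho(0)+\rho(1)=r_1\le 1=\rho(\emptystr)$ at the root. It is left-c.e.\ because $\rho(\sigma)$ is $2^{-|\sigma|}$ times a uniformly left-c.e.\ real. Finally, for $n\ge|\sigma|$ one computes $\sum_{\tau\succeq\sigma,\,|\tau|=n}\rho(\tau)=2^{n-|\sigma|}\,r_n 2^{-n}=r_n 2^{-|\sigma|}$, whence $\br{\rho}(\sigma)=\lim_n r_n 2^{-|\sigma|}=\alpha\,2^{-|\sigma|}=\alpha\lambda(\sigma)$, so $\br{\rho}=\alpha\cdot\lambda$.

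The remaining implication $(4)\Rightarrow(3)$ is immediate: if $\rho$ is left-c.e.\ with $\br{\rho}=\alpha\lambda$, then $\alpha=\br{\rho}(\emptystr)=\inf_n\sum_{|\tau|=n}\rho(\tau)$, and each level sum $r_n=\sum_{|\tau|=n}\rho(\tau)$ is a finite sum of uniformly left-c.e.\ reals, hence uniformly left-c.e. The main obstacle, and the reason for the route chosen, is that the construction needs a sequence that is at once uniformly left-c.e.\ and monotonically non-increasing with infimum $\alpha$: the $\limsup$-form supplies exactly this through $r_N=\sup_{n\ge N}q_n$, whereas a bare $\inf$-representation as in $(3)$ need not be monotone and cannot in general be made so, since the pointwise minimum of left-c.e.\ reals need not be left-c.e. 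Securing monotonicity together with left-computability is thus the only delicate point, and routing the construction through $(2)$ rather than $(3)$ resolves it.
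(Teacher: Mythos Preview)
Your cycle $(1)\Rightarrow(2)\Rightarrow(4)\Rightarrow(3)\Rightarrow(1)$ is correct, and the semi-measure you build, $\rho(\sigma)=r_{|\sigma|}2^{-|\sigma|}$ with $r_N=\sup_{n\ge N}q_n$, is exactly the paper's construction: the paper proves $(2)\Rightarrow(3)$ by setting $r_N=\sup_{n\ge N}q_n$ and then $(3)\Rightarrow(4)$ by putting $\rho(\sigma)=2^{-|\sigma|}\min_{i\le|\sigma|}r_i$, which for a non-increasing sequence reduces to your formula. So the two arguments are the same up to the order in which the implications are chained.

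Two remarks, however. First, your closing justification for the detour through $(2)$ contains a mathematical error: the minimum of finitely many left-c.e.\ reals \emph{is} left-c.e.\ --- if $r_i=\sup_s r_{i,s}$ with each $s\mapsto r_{i,s}$ non-decreasing, then $s\mapsto\min_{i\le n}r_{i,s}$ is non-decreasing and converges to $\min_{i\le n}r_i$. Hence one can pass directly from $(3)$ to $(4)$ by setting $\rho(\sigma)=2^{-|\sigma|}\min_{i\le|\sigma|}r_i$, exactly as the paper does; your route through $(2)$ is not needed to secure monotonicity. Second, your sketch of $(1)\Rightarrow(2)$ is too thin: the sequence $q_t=\min_{s\le t}f(s,t)$ you propose can have $\limsup$ strictly below $\alpha$ (e.g.\ if $f(t,t)=0$ for all $t$), so it only delivers the upper bound. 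The paper handles this direction with a careful enumeration argument ensuring both that each $g(i)$ eventually appears and that cofinitely many terms lie below any given $g(i)$; some such device is genuinely required here.
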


\begin{proof}
\noindent (1) $\Rightarrow$ (2): Let $\alpha \in [0,1]$ be $\jump$-right c.e, and assume that $\alpha$ is irrational because the implication is clear for rational $\alpha$.  Thus there is a $\jump$-computable function $g$ such that $(g(i))_{i \in \omega}$ is a strictly decreasing sequence of rationals in $[0,1]$ converging to $\alpha$.  By the limit lemma, there is a total  computable function~$f$ that outputs rationals in $[0,1]$ and is such that $(\forall i \in \omega)(g(i) = \lim_s f(i,s))$.

We define our sequence of rationals $(q_n)_{n\in\omega}$ as follows.  Let $(i_s)_{s \in \omega}$ be an effective sequence of natural numbers in which every number appears infinitely often.  At stage $s$, enumerate $f(i_s,s)$ as the next rational in the sequence if it has not yet been enumerated and $(\forall k < i_s)(f(i_s,s) < f(k,s))$.

We show that, for every $i \in \omega$,
\begin{itemize}
\item[(i)] $(\exists n_0 \in \omega)(q_{n_0} = g(i))$, and 
\item[(ii)] $(\exists n_1 \in \omega)(\forall n > n_1)(q_n < g(i))$.
\end{itemize}
For (i), given $i$, let $s$ be such that $(\forall k \leq i)(g(k) = f(k,s))$ and $i_s = i$.  Then at stage $s$ we have $(\forall k < i_s)(f(i_s,s) = g(i_s) < g(k) = f(k,s))$, so at this stage $f(i_s,s) = g(i)$ will be enumerated if it has not been enumerated already.  For (ii), given $i$, let $s_0$ be such that $(\forall k \leq i)(\forall s \geq s_0)(g(k) = f(k,s))$ and such that (by (i)) every $g(k)$ for $k \leq i$ has been enumerated by stage $s_0$.  Consider an $f(i_s,s)$ that is enumerated at some stage $s > s_0$.  
It is impossible that $i_s \leq i$ because in this case at stage $s$ we would have $f(i_s,s) = g(i_s)$, and by assumption this number was already enumerated.
%
%
Thus $i_s > i$, and to be enumerated at stage~$s$, $f(i_s,s)$ must satisfy $f(i_s,s) < f(i,s) = g(i)$ as desired.

The conclusion $\alpha = \limsup_n q_n$ now follows from (i) and (ii).  By (i), every tail of the sequence $(q_n)_{n \in \omega}$ contains an element of the form $g(i)$ for some $i$, hence since $\alpha < g(i)$ we have $\alpha \leq \limsup_n q_n$.  By (ii), $(\forall i \in \omega)(\limsup_n q_n \leq g(i))$, hence $\limsup_n q_n \leq \alpha$.\\

\noindent (2) $\Rightarrow$ (3): Suppose that $\alpha = \limsup_n q_n$ for a computable sequence of rationals $(q_n)_{n\in\omega}$.  Let $r_n := \sup(q_i)_{i\geq n}$. Clearly each $r_n$ is left-c.e.\ and $\inf_n r_n=\limsup q_n=\alpha$.\\

\noindent (3) $\Rightarrow$ (4): Since each $r_i$ is left-c.e., let $r_{i,s}$ be the $s^{\mathrm{th}}$ rational in the approximation of $r_i$.  To define $\rho$, we let $\displaystyle\rho_s(\sigma)=2^{-|\sigma|}\min_{i\leq|\sigma|} r_{i,s}$.  Then $\rho(\sigma)=2^{-|\sigma|}\min_{i\leq|\sigma|}r_i $.  It is routine to verify that~$\rho$ is a semi-measure.  Now observe that
\[
\begin{split}
\overline\rho(\sigma)=\inf_n\sum_{\tau\succeq\sigma\;\&\;|\tau|=n}\rho(\tau)&=\inf_n\sum_{\tau\succeq\sigma\;\&\;|\tau|=n} 2^{-|\tau|}\min_{i\leq|\tau|}r_i\\
&=\inf_n\; 2^{n-|\sigma|}2^{-n}\min_{i\leq n}r_i=2^{-|\sigma|}\inf_n\min_{i\leq n}r_i=\alpha\cdot2^{-|\sigma|}.
\end{split}
\]
 \\

\noindent (4) $\Rightarrow$ (1): $\jump$ computes $\sum_{x\colon |x|=n} \rho(x)$ uniformly in $n$. Therefore $\br{\rho}(\emptystr)=\inf_n \sum_{x\colon |x|=n} \rho(x)$ is $\jump$-right-c.e.
\end{proof}

The following corollary tells us that $\overline\rho$ can be as complicated as possible.

\begin{Corollary}\label{cor-lebesgue-like}
There is a left-c.e.\ semi-measure $\rho$ such that $\overline\rho=\alpha\cdot\lambda$ and $\alpha\equiv_T\emptyset''$.  In particular, every representation of $\overline\rho$ computes $\emptyset''$.
\end{Corollary}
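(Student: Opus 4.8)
The plan is to read the corollary off the equivalence (1) $\Leftrightarrow$ (4) of Theorem~\ref{thm-complexity-of-rhobar} once we produce a single real $\alpha\in[0,1]$ that is simultaneously $\jump$-right-c.e.\ and of Turing degree exactly $\emptyset''$. The natural candidate is a relativized halting probability. Let $\Omega^{\jump}$ denote the measure of the domain of a universal prefix-free machine equipped with the oracle $\jump$. Relativizing the classical facts that $\Omega$ is left-c.e.\ and $\Omega\equiv_T\emptyset'$ to the oracle $\jump$, we obtain that $\Omega^{\jump}$ is $\jump$-left-c.e.\ and that $\Omega^{\jump}\equiv_T(\jump)'=\emptyset''$. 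Setting $\alpha=1-\Omega^{\jump}$ then yields a real in $(0,1)$ which is $\jump$-right-c.e., being the complement of a $\jump$-left-c.e.\ real, while still satisfying $\alpha\equiv_T\emptyset''$.

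With such an $\alpha$ fixed, the implication (1) $\Rightarrow$ (4) of Theorem~\ref{thm-complexity-of-rhobar} directly produces a left-c.e.\ semi-measure $\rho$ with $\br\rho=\alpha\cdot\lambda$, which is exactly the first assertion of the corollary.

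For the second assertion I would show that every representation $R$ of $\br\rho$ computes $\alpha$, and hence $\emptyset''$. By the definition of a representation (see page~\pageref{page:representations}), from an enumeration of $R$ one may enumerate all basic weak-topology neighbourhoods $B_i$ containing $\br\rho$; restricting attention to neighbourhoods imposing a single constraint $\ell<\mu(\sigma)<r$ of small rational width, these enumerated neighbourhoods pin each value $\br\rho(\sigma)$ inside arbitrarily short rational intervals, so $R$ computes every value $\br\rho(\sigma)$ to any prescribed precision, uniformly in $\sigma$. In particular $R$ computes the total mass $\br\rho(\emptystr)=\alpha\cdot\lambda(\emptystr)=\alpha$, whence $R\geq_T\alpha\equiv_T\emptyset''$.

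Granting Theorem~\ref{thm-complexity-of-rhobar}, the argument is largely bookkeeping; the only genuine content is the choice of $\alpha$, and the one point deserving care is the relativization $\Omega^{\jump}\equiv_T\emptyset''$ — one must check that the reduction computing $(\jump)'$ from $\Omega^{\jump}$ goes through verbatim with $\jump$ as oracle, and that passing to the complement $1-\Omega^{\jump}$ indeed converts a $\jump$-left-c.e.\ real into a $\jump$-right-c.e.\ one without changing its degree. The representation step is then routine, relying only on the fact that a real is computable from any stream of rational intervals shrinking to it.
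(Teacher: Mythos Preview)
Your proof is correct and follows essentially the same approach as the paper: choose an $\alpha\in[0,1]$ that is $\jump$-right-c.e.\ and satisfies $\alpha\equiv_T\emptyset''$, then invoke the implication (1)$\Rightarrow$(4) of Theorem~\ref{thm-complexity-of-rhobar}. The only difference is the choice of witness: the paper takes $\alpha=\sum_{e\in\mathsf{Tot}}2^{-e}$ where $\mathsf{Tot}=\{e:\Phi_e\text{ is total}\}$ is $\Pi^0_2$-complete, whereas you take $\alpha=1-\Omega^{\jump}$; both are perfectly good $\jump$-right-c.e.\ reals of degree~$\emptyset''$, and your more detailed justification of the ``every representation computes $\emptyset''$'' clause is a welcome addition that the paper leaves implicit.
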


\begin{proof}
Recall that $\mathsf{Tot}=\{e:\Phi_e\;\text{is total}\}$.  Let $\alpha=\sum_{e \in \mathsf{Tot}}2^{-e}$, which is $\jump$-right-c.e., and apply Theorem \ref{thm-complexity-of-rhobar}.
\end{proof}

%
%
%

Despite the fact that for a given left-c.e\ semi-measure $\rho$, $\overline\rho$ can encode a lot of information, we cannot obtain every $\jump$-computable measure in this way, as the following result shows.  The witnessing measure $\mu$ we construct even has a low representation in the sense described at the beginning of Section~\ref{subsec-non-computable-measures} because the (in this case rational-valued) function $\sigma \mapsto \mu(\sigma)$ is low and clearly computes a representation of $\mu$.

\begin{Proposition}
There is a measure $\mu$ such that $\mu(\sigma)$ is a positive rational for all strings~ $\sigma$, the function $\sigma \mapsto \mu(\sigma)$ is low, and $\mu \neq \alpha\cdot\overline\rho$ for every left-c.e.\ real $\alpha$ and every left-c.e.\ semi-measure~$\rho$ (in particular, $\mu \neq \overline\rho$ for any left-c.e.\ semi-measure~$\rho$). 
\end{Proposition}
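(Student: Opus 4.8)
The plan is to build $\mu$ by a finite-injury construction relative to $\emptyset'$, fixing its conditional splitting ratios $\mu(\sigma0)/\mu(\sigma)\in\mathbb{Q}\cap(0,1)$ at every node (all unassigned nodes defaulting to ratio $1/2$), so that $\mu$ is automatically a measure with positive rational values and $\sigma\mapsto\mu(\sigma)$ is $\Delta^0_2$; lowness will be secured by the standard jump-control module. First I would fix a uniformly effective list $(\alpha_j)_{j\in\omega}$ of left-c.e.\ reals and $(\rho_e)_{e\in\omega}$ of left-c.e.\ semi-measures and impose, for each pair, the diagonalization requirement $N_{j,e}$ asserting $\mu\neq\alpha_j\overline{\rho_e}$. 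Since two measures differ as soon as they differ on a single string, it suffices for $N_{j,e}$ to reserve one node $\sigma_{j,e}$ and guarantee $\mu(\sigma_{j,e})\neq\alpha_j\,\overline{\rho_e}(\sigma_{j,e})$. Interleaved with these I would place lowness requirements $L_e$ that, at their turn, ask $\emptyset'$ whether some finite extension of the current finite approximation to $\mu$ forces $\Phi_e^{\mu}(e)\downarrow$, and act accordingly; this is the usual construction of a low set and yields $\mu'\leq_T\emptyset'$ provided the construction is $\emptyset'$-effective and each requirement acts only finitely often.

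The key point is the following approximation. By definition $\overline{\rho_e}(\sigma)=\inf_n\sum_{\tau\succeq\sigma,\,|\tau|=n}\rho_e(\tau)$, and the finite level-sums $S_n:=\sum_{\tau\succeq\sigma,\,|\tau|=n}\rho_e(\tau)$ are left-c.e.\ reals that decrease to $\overline{\rho_e}(\sigma)$; hence $\emptyset'$ computes a decreasing sequence converging to $\overline{\rho_e}(\sigma)$. Writing $\alpha_{j,s}\nearrow\alpha_j$ for a computable approximation from below, the products $T_s:=\alpha_{j,s}\cdot S_{n(s)}$ (with $n(s)\to\infty$) form an $\emptyset'$-computable sequence that \emph{converges} to $\alpha_j\,\overline{\rho_e}(\sigma_{j,e})$, being a product of two convergent sequences. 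The strategy for $N_{j,e}$ is then a dodging procedure: it controls the split ratio at the parent of $\sigma_{j,e}$ so as to set $\mu(\sigma_{j,e})$ to a rational inside the feasible interval $\bigl(0,\mu(\mathrm{parent})\bigr)$, and whenever the current value $c$ comes within $2^{-k}$ of the current approximation $T_s$ (where $k$ counts previous resets) it resets $\mu(\sigma_{j,e})$ to the next rational in a fixed enumeration of that interval. Because $T_s$ converges, this procedure resets only finitely often — infinitely many resets would force the successive enumerated rationals to converge to $\lim_s T_s$, which is impossible — and it stabilizes at a rational provably bounded away from $\alpha_j\,\overline{\rho_e}(\sigma_{j,e})$, so $N_{j,e}$ is met.

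The main obstacle, and the reason for setting things up this way, is reconciling the diagonalization with lowness: lowness forces the whole construction to be finite-injury over $\emptyset'$, so each $N_{j,e}$ must act finitely often, yet $\overline{\rho_e}(\sigma)$ is only $\emptyset'$-right-c.e.\ and admits no $\emptyset'$-effective lower bounds, so one cannot confirm an inequality between $\mu(\sigma_{j,e})$ and the target by any one-sided test. The escape is exactly the remark above that the \emph{value} $\alpha_j\,\overline{\rho_e}(\sigma_{j,e})$ — as opposed to, say, a normalized conditional probability, whose approximation need not converge — is a genuine limit of a convergent $\emptyset'$-computable sequence (no hypothesis such as $\overline{\rho_e}(\emptystr)>0$ is needed, since convergence of the product follows merely from convergence of each factor); this is what makes the dodging terminate and hence keeps the construction finite-injury. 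With each requirement acting finitely often, the standard verification shows that $\mu$ is a positive-rational-valued measure with $\sigma\mapsto\mu(\sigma)$ low and $\mu\neq\alpha_j\overline{\rho_e}$ for all $j,e$, and in particular $\mu\neq\alpha\cdot\overline{\rho}$ for every left-c.e.\ real $\alpha$ and every left-c.e.\ semi-measure $\rho$.
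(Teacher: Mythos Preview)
There is a genuine gap in the lowness argument. Your dodging procedure for $N_{j,e}$ uses the approximations $T_s=\alpha_{j,s}\cdot S_{n(s)}$, which you correctly identify as an $\emptyset'$-computable sequence converging to $\alpha_j\cdot\overline{\rho_e}(\sigma_{j,e})$; but this makes the entire construction $\emptyset'$-effective rather than computable. In that setting the sentence ``this is the usual construction of a low set and yields $\mu'\le_T\emptyset'$ provided the construction is $\emptyset'$-effective and each requirement acts only finitely often'' is simply false. The standard jump-control argument needs $\emptyset'$ to be able to compute, for each $e$, the stage at which all requirements of higher priority than $L_e$ have stopped acting. Here that question is ``for all later $\emptyset'$-stages, $N_{j,e}$ does not reset'', which is $\Pi^0_1(\emptyset')$; your termination argument for the dodging is nonconstructive (it goes through a contradiction with the non-convergence of the enumeration $(q_k)$) and gives $\emptyset'$ no way to recognise the last reset. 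Consequently $\emptyset'$ cannot recover the final outcome of $L_e$, and indeed cannot even compute $\mu(\sigma)$: you obtain only $\mu\le_T\emptyset''$, not lowness. (Note that $\overline{\rho_e}(\sigma)$ can have Turing degree $\emptyset''$ by Corollary~\ref{cor-lebesgue-like}, so no computable sequence converges to it; the use of $\emptyset'$ in your approximation really is unavoidable on this route.)

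The paper avoids this entirely by a simple trick you are missing: since $\overline\rho\le\rho$, it suffices to achieve $\mu(\sigma)>\alpha_e\cdot\rho_i(\sigma)$ for some $\sigma$. The right-hand side is now a left-c.e.\ real, so the condition ``$m(\sigma)>\alpha_e\cdot\rho_i(\sigma)$'' is $\Pi^0_1$ and $\emptyset'$-decidable, and density holds because a semi-measure satisfies $\inf\{\rho_i(\sigma):\sigma\succeq 0^n\}=0$. This turns the diagonalisation into a one-shot forcing condition that $\emptyset'$ meets definitively, with no injury at all; then the usual density argument for the $L_e$ conditions gives lowness directly.
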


\begin{proof}
Let $\Qb^{>0}$ denote the set of positive rationals.  For each $n \in \omega$, let $2^{\leq n}$ denote the set of strings of length at most $n$, and let $2^{< n}$ denote the set of strings of length less than~$n$.  Define a \emph{partial measure} to be a function of the form $m \colon 2^{\leq n} \imp \Qb^{>0}$ for some $n \in \omega$ such that $m(\emptystr) = 1$ and $(\forall \sigma \in 2^{< n})(m(\sigma) = m(\sigma0) + m(\sigma1))$.  The partial measures form a partial order $\Pb$ when ordered by extension:  $m_0 \sqsubseteq m_1$ if $\dom(m_0) \supseteq \dom(m_1)$ and $(\forall \sigma \in \dom(m_1))(m_0(\sigma) = m_1(\sigma))$.  Similarly, if $m$ is a partial measure and $\mu$ is a measure, we write $\mu \sqsubseteq m$ if $(\forall \sigma \in \dom(m))(\mu(\sigma) = m(\sigma))$.  

To ensure $\mu \neq \alpha\cdot\overline\rho$, it suffices to ensure that there is a $\sigma \in 2^{<\omega}$ such that $\mu(\sigma) > \alpha\cdot\rho(\sigma)$ because then $\mu(\sigma) > \alpha\cdot\rho(\sigma) \geq \alpha\cdot\overline\rho(\sigma)$.  To this end, let $(\alpha_e)_{e \in \omega}$ be an effective list of all left-c.e.\ reals, and let $(\rho_e)_{e \in \omega}$ be an effective list of all left-c.e.\ semi-measures.

We satisfy the following list of requirements for all $e,i \in \omega$:
\begin{align*}
\MP{R}_{\la e,i \ra} \colon& (\exists \sigma \in 2^{<\omega})(\mu(\sigma) > \alpha_e\cdot\rho_i(\sigma))\\
\MP{L}_e \colon& (\exists m \sqsupseteq \mu)(\Phi_e^m(e)\da \;\vee\; (\forall m' \sqsubseteq m)(\Phi_e^{m'}(e)\ua)).
\end{align*}

To each requirement we associate the subset of $\Pb$ consisting of the partial measures that satisfy the requirement:
\begin{align*}
R_{\la e,i \ra} &= \{m \in \Pb : (\exists \sigma \in \dom(m))(m(\sigma) > \alpha_e \cdot \rho_i(\sigma))\}\\
L_e &= \{m \in \Pb : \Phi_e^m(e)\da \;\vee\; (\forall m' \sqsubseteq m)(\Phi_e^{m'}(e)\ua)\}.
\end{align*}

\begin{Claim}
For every $e,i \in \omega$, $R_{\la e,i \ra}$ is a dense subset of $\Pb$.
\end{Claim}
\begin{proof}
Let $m \colon 2^{\leq n} \imp \Qb^{>0}$ be a given member of $\Pb$, and let $q = m(0^n)$.  The fact that $\rho_i$ is a semi-measure implies that, for all $k \geq n$, $\sum\{\rho_i(\sigma) : \sigma \succeq 0^n \;\wedge\; |\sigma| = k\} \leq \rho_i(0^n)$.  Therefore $\inf\{\rho_i(\sigma) : \sigma \succeq 0^n\} = 0$, so there is a $\sigma \succ 0^n$ such that $\alpha_e \cdot \rho_i(\sigma) \leq q/2$.  We may extend $m$ to a partial measure $m'$ that satisfies $m'(\sigma) = 3q/4$ and $m'(\tau) = q/4(2^{|\sigma|-n}-1)$ for all $\tau \succeq 0^n$ with $|\tau| = |\sigma|$ and $\tau \neq \sigma$.  Then $m' \in R_{\la e,i \ra}$ because $m'(\sigma) = 3q/4 > q/2 \geq \alpha_e \cdot \rho_i(\sigma)$.   
\end{proof}

\begin{Claim}
For every $e \in \omega$, $L_e$ is a dense subset of $\Pb$.
\end{Claim}
\begin{proof}
Let $m$ be given.  If there is an $m' \sqsubseteq m$ such that $\Phi_e^{m'}(e)\da$, then $m' \in L_e$.  If not, then $m \in L_e$.
\end{proof}

The sets $R_{\la e,i \ra}$ and $L_e$ are dense in $\Pb$ and uniformly c.e.\ in $\jump$ ($L_e$ is even $\jump$-computable), so $\jump$ can compute a measure $\mu$ such that $\mu(\sigma)$ is a positive rational for each string $\sigma$ and such that $\mu$ meets all of the requirements.  That is, $(\forall e,i \in \omega)(\exists m \sqsupseteq \mu)(m \in R_{\la e, i\ra})$ and $(\forall e \in \omega)(\exists m \sqsupseteq \mu)(m \in L_e)$.  Therefore $\sigma \mapsto \mu(\sigma)$ is low, and $\mu \neq \alpha \cdot \overline\rho$ for every left-c.e.\ real $\alpha$ and every left-c.e.\ semi-measure $\rho$.\end{proof}

It is well-known that for a measure $\mu$, the atoms of $\mu$ are computable from any representation of $\mu$ (which can be shown by generalizing the proof of Lemma \ref{lem-kautz}).  Thus, given the computational power of $\overline\rho$, one might expect that the atoms of~$\overline\rho$ for some left-c.e.\ semi-measure~$\rho$ will include some non-computable sequences.  But this does not hold.

\begin{Proposition}\label{prop-no-new-atoms}
A set $X \in 2^\omega$ is computable if and only if there exists a left-c.e.\ semi-measure~ $\rho$ such that $X$ is an atom of $\overline\rho$.
\end{Proposition}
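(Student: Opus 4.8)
The plan is to prove both directions separately, with the reverse implication carrying all the weight. For the forward direction, suppose $X$ is computable. By Lemma~\ref{lem-kautz} there is a computable measure $\mu$ having $X$ as an atom. A computable measure is in particular a left-c.e.\ semi-measure, and Proposition~\ref{prop-rhobar-properties} gives $\overline\mu=\mu$ since $\mu$ is already a measure. Hence $\rho:=\mu$ is a left-c.e.\ semi-measure for which $X$ is an atom of $\overline\rho$, and this direction is done.

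For the reverse direction, suppose $\rho$ is a left-c.e.\ semi-measure and $X$ is an atom of $\overline\rho$, say $\overline\rho(\{X\})=c>0$. First I would fix, via Theorem~\ref{thm-MachinesInduceSemiMeasures}, a Turing functional $\Phi$ with $\rho=\lambda_\Phi$, and translate the atom condition into a statement about the fiber $A:=\{Y:\Phi^Y=X\}=\Phi^{-1}(\{X\})$. By Proposition~\ref{prop-rhobar-properties} we have $\overline\rho(X\uh n)=\lambda(\{Y\in\dom(\Phi):\Phi^Y\succeq X\uh n\})$; these sets are nested and non-increasing in $n$ with intersection exactly $A$, so $\lambda(A)=\lim_n\overline\rho(X\uh n)=\overline\rho(\{X\})=c>0$. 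This reduces the problem to the classical fact that a Turing functional cannot send a positive-measure set of inputs to a single noncomputable output.

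To show $X$ is computable, I would argue as follows. By the Lebesgue density theorem there is a finite string $\sigma_0$ with $\lambda(A\cap\llb\sigma_0\rrb)>\tfrac34\cdot 2^{-|\sigma_0|}$, and such a single $\sigma_0$ is admissible as nonuniform advice in a computability argument. Working inside $\llb\sigma_0\rrb$ with the renormalized measure $\hat\lambda(S)=2^{|\sigma_0|}\lambda(S\cap\llb\sigma_0\rrb)$, we get $\hat\lambda(A)>\tfrac34$. For each $k$ and each bit $b$, set $\hat P^k_b=\hat\lambda(\{Y:|\Phi^Y|>k \text{ and } \Phi^Y(k)=b\})$; equivalently $\hat P^k_b=\hat\lambda\bigl(\bigcup\{\{Y:\Phi^Y\succeq\tau\}:|\tau|=k{+}1,\ \tau(k)=b\}\bigr)$, so $\hat P^k_b$ is a left-c.e.\ real uniformly in $k$ and $b$, obtained by enumerating $\Phi$. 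Since every $Y\in A$ satisfies $\Phi^Y=X$, we have $A\subseteq\{Y:|\Phi^Y|>k,\ \Phi^Y(k)=X(k)\}$, whence $\hat P^k_{X(k)}\ge\hat\lambda(A)>\tfrac34$; and because $\hat P^k_0+\hat P^k_1\le 1$, this forces $\hat P^k_{1-X(k)}<\tfrac14$. Thus $X(k)$ is computed by a majority vote: dovetail lower approximations to $\hat P^k_0$ and $\hat P^k_1$ and output the first bit whose approximation exceeds $\tfrac12$. This halts (the correct bit's value exceeds $\tfrac34$) and is correct (the wrong bit's value stays below $\tfrac14$), so $X$ is computable, with $\sigma_0$ the only finite advice.

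The main obstacle, and essentially the only subtlety, is the effectivity of the majority-vote step: a priori $\overline\rho$ is highly noncomputable (Corollary~\ref{cor-lebesgue-like}), so one cannot compute $\overline\rho$ directly. The resolution has two ingredients: passing to a high-density cylinder is permissible because the density point contributes only finite advice and makes $A$ dominate $\llb\sigma_0\rrb$; and the quantities $\hat P^k_b$ that one actually compares are left-c.e., since they are $\lambda$-measures of $\Sigma^0_1$ preimage sets defined from $\rho$ alone and do \emph{not} require the noncomputable restriction to $\dom(\Phi)$. The bookkeeping to double-check is the inclusion $A\subseteq\{Y:|\Phi^Y|>k,\ \Phi^Y(k)=X(k)\}$ together with $\hat P^k_0+\hat P^k_1\le 1$, which jointly produce a clean gap around $\tfrac12$.
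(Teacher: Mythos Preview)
Your proof is correct but follows a different route from the paper's.

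For the forward direction, the paper is more direct: for computable $X$ it simply takes the Dirac semi-measure $\rho(\sigma)=1$ for $\sigma\prec X$ and $\rho(\sigma)=0$ otherwise. Your appeal to Lemma~\ref{lem-kautz} works but is unnecessary.

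For the reverse direction the approaches diverge more substantially. The paper never introduces a Turing functional or invokes Lebesgue density; it works directly with the values $\rho(X\uh n)$. The key observation (left implicit) is that $\rho(X\uh n)$ is non-increasing and in fact converges to $\alpha:=\overline\rho(\{X\})$: indeed $\overline\rho(X\uh N)\ge \rho(X\uh n)$ for every $n\ge N$, so $\lim_N\overline\rho(X\uh N)\ge\lim_n\rho(X\uh n)\ge\alpha$, forcing equality. One then picks a rational $q$ with $\alpha/2<q<\alpha$ and an $N$ with $\rho(X\uh N)<2q$, and computes $X(n)$ for $n\ge N$ by waiting for one of $\rho((X\uh n)\smf 0)$, $\rho((X\uh n)\smf 1)$ to reach $q$: the correct branch must (since $\rho\ge\overline\rho\ge\alpha>q$ along $X$), and the wrong branch cannot (else their sum would exceed $2q>\rho(X\uh N)\ge\rho(X\uh n)$, contradicting the semi-measure inequality). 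Your argument instead realizes $\rho$ via a functional $\Phi$, shows the fiber $\Phi^{-1}(\{X\})$ has positive Lebesgue measure, passes to a high-density cylinder, and runs a majority vote on the left-c.e.\ quantities $\hat P^k_b$. This is essentially the classical folklore proof that a Turing functional cannot collapse a set of positive measure to a single noncomputable point. The paper's version is more self-contained and avoids both the density theorem and the functional representation; your version makes the connection to that classical fact explicit and would transfer verbatim to any computable base measure in place of $\lambda$.
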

\begin{proof} 
The left to right direction is trivial:  for a given computable $X\in\cs$, we define a left-c.e.\ semi-measure $\rho$ by setting $\rho(X\uh n)=1$ for all $n$. For the other direction let $\rho$ be a left-c.e.\ semi-measure and assume that $X$ is an atom of $\br \rho$. Write $\alpha=\lim_n \br\rho(X\uh n)$ and choose $q\in\Qb$ such that $\frac{1}{2}\alpha < q< \alpha$.
Then there exists a large enough $N$ such that $\rho(X\uh N)$ is strictly smaller than $2q$. To decide all further bits of $X$, say $X(n)$ for $n\geq N$ we proceed inductively as follows.  Wait until one of $\rho(X\uh n^\frown 0)$ and $\rho(X\uh n^\frown 1)$ attains or exceeds $q$, and output the according bit.  This bit is the correct value of $X(n)$, since $\rho(X\uh n^\frown X(n))$ must eventually attain or exceed $q$ while $\rho(X\uh n^\frown (1-X(n)))$ cannot attain $q$, as otherwise their sum would be at least $2q$ and would therefore exceed $\rho(X\uh N)$, contradicting our choice of~$N$.
\end{proof}

Although the measure derived from a left-c.e.\ semi-measure cannot have a non-computable atom, one interesting difference between these derived measures and computable measures is that whereas there is no computable measure $\mu$ such that every computable sequence is a $\mu$-atom (because for each computable measure $\mu$ one can effectively find a sequence~$X$ such that $\lim_{n\rightarrow\infty}\mu(X\uh n)=0$), \emph{every} computable sequence is an atom of $\overline M$, because $\overline M$ dominates every computable measure up to a positive multiplicative constant.

\subsection{Notions of randomness with respect to $\overline\rho$}

We now apply the definitions of Martin-L\"of randomness with respect to non-computable measures, introduced in Section \ref{subsec-non-computable-measures}, to the measure derived from a semi-measure and compare the resulting definitions to the definitions studied in Section \ref{sec-direct-modifications}.

As noted in Section \ref{subsec-non-computable-measures}, there are two general approaches to defining a randomness test $(\MP{U}_i)_{i\in\omega}$  with respect to a non-computable measure $\mu$:  either allow $(\MP{U}_i)_{i\in\omega}$ to have access to a representation of $\mu$ as an oracle and require $\mu(\MP{U}_i)\leq 2^{-i}$ for every $i$, or simply require the latter condition without using a representation of $\mu$ as an oracle.  

Taking the former approach yields the following example:
\begin{Proposition}\label{prop-lebesgue-like}
Let $\rho$ be the semi-measure from Corollary \ref{cor-lebesgue-like}, so that $\overline\rho=\alpha\cdot\lambda$ for some $\alpha\equiv_T\emptyset''$.   Then $\overline\rho$-Martin-L\"of randomness is 3-randomness.
\end{Proposition}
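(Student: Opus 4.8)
The plan is to show that $\overline\rho$-Martin-L\"of randomness (where the test may access a representation $R$ of $\overline\rho$ as an oracle) coincides with $3$-randomness, i.e.\ Martin-L\"of randomness relative to $\emptyset''$. The guiding observation is that since $\overline\rho = \alpha\cdot\lambda$ with $\alpha\equiv_T\emptyset''$, the measure $\overline\rho$ is just a scalar multiple of Lebesgue measure, so an $\overline\rho$-test differs from an ordinary oracle Martin-L\"of test only by the constant factor $\alpha$ and by whatever oracle power is needed to read off $\alpha$. The two things I must pin down are: (a) what the representations of $\overline\rho$ compute, and (b) that scaling Lebesgue measure by a fixed positive constant does not change the randomness notion once the constant is available as an oracle.

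First I would record the complexity of representations. By Corollary~\ref{cor-lebesgue-like}, every representation of $\overline\rho$ computes $\emptyset''$. Conversely, I would argue that $\emptyset''$ computes a representation of $\overline\rho$: from $\emptyset''$ one can compute (or sufficiently approximate) $\alpha$, and hence the dyadic approximations to $\overline\rho(\sigma)=\alpha\cdot 2^{-|\sigma|}$ needed to enumerate the basic open sets $B_i$ containing $\overline\rho$ in the weak topology. Thus the representations of $\overline\rho$ are exactly the sets that compute $\emptyset''$, and in particular $\emptyset''$ itself (or any $\emptyset''$-computable sequence) serves as a representation. This reduces the problem to comparing $\mlr_{\overline\rho}$ against $\mlr^{\emptyset''}$, i.e.\ $3$-randomness.

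Next I would prove the equivalence for a fixed representation. For the easier inclusion, if $X$ is not $3$-random, there is a $\emptyset''$-Martin-L\"of test $(\mathcal V_i)_{i\in\omega}$ with $\lambda(\mathcal V_i)\leq 2^{-i}$ and $X\in\bigcap_i\mathcal V_i$; taking $\mathcal U_i=\mathcal V_{i+c}$ for a suitable constant absorbing the factor $\alpha$ (using $\alpha>0$, so $\overline\rho(\mathcal V_{i+c})=\alpha\cdot\lambda(\mathcal V_{i+c})\leq \alpha\,2^{-i-c}\leq 2^{-i}$) gives an $R$-test capturing $X$, where $R$ is any representation computing $\emptyset''$. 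Conversely, given an $R$-Martin-L\"of test $(\mathcal U_i)_{i\in\omega}$ with $\overline\rho(\mathcal U_i)\leq 2^{-i}$, I rewrite the condition as $\lambda(\mathcal U_i)\leq \alpha^{-1}2^{-i}$; since the sets are $\Sigma^0_1(R)$ and $R$ computes $\emptyset''$, and since $\alpha^{-1}$ is a fixed constant, shifting the index produces a genuine $\emptyset''$-Martin-L\"of test with respect to $\lambda$ capturing the same points. Because $\mu$-Martin-L\"of randomness for a non-computable measure requires only \emph{some} representation to witness randomness, and all representations of $\overline\rho$ are Turing-equivalent over $\emptyset''$ in the relevant sense, the two quantifications match up and yield $\mlr_{\overline\rho}=\mlr^{\emptyset''}=\twomlr^{\emptyset'}$, i.e.\ $3$-randomness.

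The main obstacle I anticipate is the careful handling of the quantifier over representations in the definition of $\mlr_{\overline\rho}$: one must check that allowing \emph{any} representation $R$ does not make randomness strictly easier to achieve than working with $\emptyset''$ alone. The key point to nail down is that although representations can compute strictly more than $\emptyset''$, any $R$-test for $\overline\rho$ can be converted into a $\emptyset''$-test (because reading $\alpha$ to sufficient precision and enumerating a $\Sigma^0_1(R)$ set bounded in $\overline\rho$-measure only genuinely needs the $\emptyset''$-content of $R$, as $\overline\rho$ is determined by the single parameter $\alpha\equiv_T\emptyset''$), so no extra randomness is gained or lost by passing to a more powerful representation. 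Once this is established, the scalar rescaling by $\alpha$ is routine, and the identification with $3$-randomness follows.
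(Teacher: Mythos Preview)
Your overall approach is the same as the paper's: both directions are obtained by rescaling between $\overline\rho=\alpha\cdot\lambda$ and $\lambda$, together with the two facts (a) $\emptyset''$ computes a representation of $\overline\rho$ and (b) every representation of $\overline\rho$ computes $\emptyset''$. Your two inclusion paragraphs essentially reproduce the paper's argument.

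However, your ``main obstacle'' paragraph contains a genuine misconception that you should excise rather than try to resolve. The claim that ``any $R$-test for $\overline\rho$ can be converted into a $\emptyset''$-test'' is false: a representation $R$ may have Turing degree strictly above $\emptyset''$, and a $\Sigma^0_1(R)$ class need not be $\Sigma^0_1(\emptyset'')$ merely because its $\overline\rho$-measure is small. Relatedly, your assertion that ``all representations of $\overline\rho$ are Turing-equivalent over $\emptyset''$'' is not correct; Corollary~\ref{cor-lebesgue-like} only gives that each representation \emph{computes} $\emptyset''$, not that it is computed by $\emptyset''$. The good news is that none of this is needed. For the inclusion $\mlr^{\emptyset''}\subseteq\mlr_{\overline\rho}$, the existential quantifier over representations in the definition of $\mlr_{\overline\rho}$ is discharged by the \emph{single} representation $R_0\equiv_T\emptyset''$ that you already produced; one only has to check that a $3$-random $X$ passes every $R_0$-test, and this is exactly your scaling argument since $R_0$-tests are $\emptyset''$-tests. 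For the inclusion $\mlr_{\overline\rho}\subseteq\mlr^{\emptyset''}$, the $\emptyset''$-Martin-L\"of test (for $\lambda$) witnessing that $X$ is not $3$-random is, after your index shift, an $R$-test for $\overline\rho$ for \emph{every} representation $R$, since every such $R$ computes $\emptyset''$; hence $X\notin\mlr_{\overline\rho}^R$ for all $R$, and so $X\notin\mlr_{\overline\rho}$. This is precisely how the paper handles the quantifier, and once you phrase it this way there is no obstacle left to address.
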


\begin{proof}
Let $j\in\omega$ satisfy $2^{-(j+1)}<\alpha<2^{-j}$, which implies that $2^j <\frac{1}{\alpha}<2^{(j+1)}$.  First we show that $\mlr^{\emptyset''}\subseteq\mlr_{\overline\rho}$.  Since
 $\emptyset''$ computes a representation of $\overline\rho$, we have $\mlr^{\emptyset''}_{\overline\rho}\subseteq\mlr_{\overline\rho}$.  Now for any $\emptyset''$-Martin-L\"of test $(\MP{U}_i)_{i\in\omega}$ (with respect to $\overline\rho$), we have $\alpha\cdot\lambda(\MP U_i)\leq 2^{-i}$, which implies that $\lambda(\MP U_i)\leq 2^{j+1-i}$.  Thus $(\MP U_i)_{i\geq j+1}$ is a $\emptyset''$-Martin-L\"of test (with respect to $\lambda$) that covers $(\MP{U}_i)_{i\in\omega}$.  Thus $\mlr^{\emptyset''}\subseteq\mlr_{\overline\rho}$.

To show that $\mlr_{\overline\rho}\subseteq\mlr^{\emptyset''}$, let  
$(\MP{U}_i)_{i\in\omega}$ be a $\emptyset''$-Martin-L\"of test with respect to $\lambda$.  Then since
\[
\alpha\cdot\lambda(\MP U_i)\leq2^{-j}\lambda(\MP U_i)\leq 2^{-(i+j)},
\]
it follows that $(\MP{U}_i)_{i\in\omega}$ is a $\emptyset''$-Martin-L\"of test with respect to $\overline\rho$.  But since every representation of $\overline\rho$ computes $\emptyset''$, it follows that for any such representation $R$, $(\MP{U}_i)_{i\in\omega}$ is an $R$-Martin-L\"of test with respect to $\overline\rho$.  Thus $\mlr_{\overline\rho}^R\subseteq\mlr^{\emptyset''}$ for all representations $R$ of $\overline\rho$, and hence $\mlr_{\overline\rho}\subseteq\mlr^{\emptyset''}$.
\end{proof}

This example shows a defect of using $\overline\rho$ to define randomness with respect to $\rho$.  As $\overline\rho$ is a multiple of the Lebesgue measure, we would expect that $\overline\rho$-Martin-L\"of randomness is just Martin-L\"of randomness with respect to the Lebesgue measure.  But the $\alpha$ encodes information that can be used to derandomize any sequence that is not 3-random.  However, the blind approach to $\overline\rho$-randomness avoids this problem.

\begin{Proposition}
Let $\rho$ be the semi-measure from Corollary \ref{cor-lebesgue-like}.  Then blind $\overline\rho$-Martin-L\"of randomness is the same as Martin-L\"of randomness.
\end{Proposition}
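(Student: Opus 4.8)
The plan is to show that blind $\overline\rho$-Martin-L\"of randomness coincides with Lebesgue Martin-L\"of randomness, using the key facts that $\overline\rho = \alpha\cdot\lambda$ (so $\overline\rho$ is a \emph{scalar multiple} of $\lambda$) and that blind tests, by definition, have no access to a representation of the measure and so are merely uniformly $\Sigma^0_1$ sequences $(\mathcal{U}_i)_{i\in\omega}$ with $\overline\rho(\mathcal{U}_i)\le 2^{-i}$. The crucial point, contrasting with the previous proposition, is that blindness removes the ability to use $\alpha$ (and hence $\emptyset''$) as an oracle: a blind $\overline\rho$-test and a blind $\lambda$-test are the \emph{same kind of object}, namely a plain uniformly effectively open sequence; only the size condition differs by the constant factor $\alpha$. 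I expect the whole proof to reduce to absorbing that constant factor into the index of the test.

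First I would fix $j\in\omega$ with $2^{-(j+1)}<\alpha<2^{-j}$, exactly as in the previous proof, so that $\alpha$ and $1/\alpha$ are each pinned between consecutive powers of two. For the inclusion $\mlr\subseteq\bmlr_{\overline\rho}$, I would take any blind $\overline\rho$-test $(\mathcal{U}_i)_{i\in\omega}$, so $\alpha\cdot\lambda(\mathcal{U}_i)\le 2^{-i}$; since $\alpha>2^{-(j+1)}$ this gives $\lambda(\mathcal{U}_i)\le 2^{j+1-i}$, whence the shifted sequence $(\mathcal{U}_{i})_{i\ge j+1}$ (reindexed) is a genuine $\lambda$-Martin-L\"of test covering the same intersection. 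Thus any $X$ captured by the blind $\overline\rho$-test is already non-Martin-L\"of-random, giving $\mlr\subseteq\bmlr_{\overline\rho}$. For the reverse inclusion $\bmlr_{\overline\rho}\subseteq\mlr$, I would start from any $\lambda$-test $(\mathcal{U}_i)_{i\in\omega}$ with $\lambda(\mathcal{U}_i)\le 2^{-i}$ and use $\alpha<2^{-j}$ to compute $\overline\rho(\mathcal{U}_i)=\alpha\cdot\lambda(\mathcal{U}_i)\le 2^{-j}\cdot 2^{-i}=2^{-(i+j)}$, so after reindexing this is a blind $\overline\rho$-test capturing the same sequences; hence any $X$ not Lebesgue-random is not blind $\overline\rho$-random. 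Combining the two inclusions yields $\bmlr_{\overline\rho}=\mlr$.

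The content of the argument is entirely in recognizing that the definitional asymmetry between blind and representation-based tests is what saves us: in the representation case the test may query $\emptyset''$ and so can mimic a level-$2$-jump test, whereas the blind test is forced to be plainly effective and the measure $\overline\rho$ enters only through its numerical values on the $\mathcal{U}_i$. Since those values differ from the Lebesgue values by the single fixed constant $\alpha\in(2^{-(j+1)},2^{-j})$, the passage between the two test notions costs only a uniform shift of $j+1$ (respectively $j$) in the test index, which leaves the class of captured sequences unchanged. The only genuine subtlety to check is the trivial bookkeeping that reindexing a uniformly $\Sigma^0_1$ sequence and discarding finitely many initial levels preserves both uniform effective openness and the relevant measure bounds; there is no real obstacle here, so I anticipate the proof to be short. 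It may be worth remarking explicitly that the argument uses nothing about $\alpha$ beyond $\alpha\in(0,1]$, so the same reasoning shows that for \emph{every} Lebesgue-like semi-measure $\rho$, blind $\overline\rho$-randomness equals Martin-L\"of randomness.
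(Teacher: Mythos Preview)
Your proposal is correct and takes essentially the same approach as the paper, which simply remarks that ``by an argument similar to the one in the proof of Proposition~\ref{prop-lebesgue-like}, every Martin-L\"of test is covered by a blind $\overline\rho$-Martin-L\"of test, and vice versa.'' Your write-up spells out precisely that index-shifting argument, and your closing observation that only $\alpha\in(0,1]$ is used (so the result holds for every Lebesgue-like semi-measure) is a welcome addition.
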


\begin{proof}
By an argument similar to the one in the proof of Proposition \ref{prop-lebesgue-like}, every Martin-L\"of test is covered by a blind $\overline\rho$-Martin-L\"of test, and vice versa.
\end{proof}

As a consequence of these two examples, we have the following:

\begin{Corollary}
There is a left-c.e.\ semi-measure $\rho$ such that $\mlr_{\overline\rho}\subsetneq\bmlr_{\overline\rho}$.
\end{Corollary}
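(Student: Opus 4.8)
The plan is to read off the corollary directly from the two preceding propositions, both of which have already been established for one and the same semi-measure, namely the $\rho$ furnished by Corollary~\ref{cor-lebesgue-like}. For that particular $\rho$, Proposition~\ref{prop-lebesgue-like} identifies $\mlr_{\overline\rho}$ with $\mlr^{\emptyset''}$, i.e.\ with the class of $3$-random sequences, while the unlabeled proposition immediately following it identifies $\bmlr_{\overline\rho}$ with $\mlr$. So once both propositions are in hand, the only remaining task is to check that $\mlr^{\emptyset''}$ is a \emph{proper} subclass of $\mlr$, which is a well-known separation of $3$-randomness from Martin-L\"of randomness.

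First I would note the trivial inclusion $\mlr^{\emptyset''}\subseteq\mlr$: every (unrelativized) Martin-L\"of test is in particular a $\emptyset''$-Martin-L\"of test, since every $\Sigma^0_1$ class is $\Sigma^0_1(\emptyset'')$, so passing all $\emptyset''$-tests is a stronger requirement than passing all unrelativized tests. To see that the inclusion is strict, I would exhibit a Martin-L\"of random sequence that fails to be $3$-random, and the natural witness is Chaitin's $\Omega$. Indeed $\Omega$ is Martin-L\"of random and left-c.e., hence $\Delta^0_2$, so in particular $\Omega\leq_T\emptyset'$. Since $\Omega$ is noncomputable and $\Omega\leq_T\emptyset'$, the sequences $\Omega$ and $\emptyset'$ do not form a Turing minimal pair, so Theorem~\ref{thm-w2r-minimal-pair} applied to the Lebesgue measure $\lambda$ shows that $\Omega$ is not $\lambda$-weakly $2$-random. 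As weak $2$-randomness is weaker than $2$-randomness, which is in turn weaker than $3$-randomness, $\Omega$ is not $3$-random, and therefore $\Omega\in\mlr\setminus\mlr^{\emptyset''}$.

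Combining these observations for the semi-measure $\rho$ of Corollary~\ref{cor-lebesgue-like} gives
\[
\mlr_{\overline\rho}=\mlr^{\emptyset''}\subsetneq\mlr=\bmlr_{\overline\rho},
\]
which is exactly the claimed strict inclusion. I expect essentially no obstacle here: the content of the corollary lies entirely in the two preceding propositions, and the present argument only has to glue them together via the standard fact that $3$-randomness is strictly stronger than Martin-L\"of randomness. The single point deserving care is the direction of the inclusions, namely keeping straight that the oracle-based notion $\mlr_{\overline\rho}$ (which can exploit the $\emptyset''$ coded into $\alpha$) yields the \emph{smaller} class, whereas the blind notion $\bmlr_{\overline\rho}$ cannot access that information and yields the \emph{larger} class.
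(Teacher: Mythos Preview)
Your proposal is correct and matches the paper's approach exactly: the paper presents the corollary as an immediate consequence of the two preceding propositions (identifying $\mlr_{\overline\rho}$ with $3$-randomness and $\bmlr_{\overline\rho}$ with Martin-L\"of randomness for the $\rho$ of Corollary~\ref{cor-lebesgue-like}), together with the standard fact that $3$-randomness is strictly stronger than Martin-L\"of randomness. Your explicit witness $\Omega$ for the strictness is a harmless elaboration of what the paper leaves implicit.
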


We also have the following.

\begin{Proposition}
There is a left-c.e.\ semi-measure $\rho$ such that $\bmlr_{\overline\rho}\subsetneq\mlr_\rho$.
\end{Proposition}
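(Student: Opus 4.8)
The plan is to reuse the left-c.e.\ semi-measure $\rho$ built in the proof of Theorem~\ref{thm-EverythingMLR}, which satisfies $\mlr_\rho=\cs$. For this $\rho$ I claim that the derived measure $\overline\rho$ is identically $0$, and once that is in hand the separation is immediate. Indeed, if $\overline\rho$ is the zero measure, then taking $\mathcal{U}_i=\cs$ for every $i$ gives a blind $\overline\rho$-Martin-L\"of test, since $\overline\rho(\mathcal{U}_i)=0\leq 2^{-i}$; as $\bigcap_{i\in\omega}\mathcal{U}_i=\cs$, no sequence passes it, so $\bmlr_{\overline\rho}=\emptyset$. Combining this with $\mlr_\rho=\cs$ yields $\bmlr_{\overline\rho}=\emptyset\subsetneq\cs=\mlr_\rho$, as required.

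So the only real step is to verify that $\overline\rho=0$. Recall that in that construction $\rho(\emptystr)=1$ and $\rho(\sigma)=\sum_{e\in\omega}\rho_e(\sigma)$ for $|\sigma|>0$, where each auxiliary semi-measure $\rho_e$ assigns the value $2^{-e-1}$ to every prefix of at most one finite string $\tau_e$ (the first string, if any, to enter $E^e_{e+2}$) and the value $0$ to all other strings. I would compute $\overline\rho(\emptystr)=\lim_{n\to\infty}\sum_{|\sigma|=n}\rho(\sigma)$ by exchanging the two sums: for each $e$ one has $\sum_{|\sigma|=n}\rho_e(\sigma)=2^{-e-1}$ when $n\leq|\tau_e|$ and $0$ when $n>|\tau_e|$, so
\[
\sum_{|\sigma|=n}\rho(\sigma)=\sum_{e\,:\,|\tau_e|\geq n}2^{-e-1}.
\]
Each summand tends to $0$ as $n\to\infty$ because every $\tau_e$ is a finite string, and the summands are dominated by the summable sequence $(2^{-e-1})_{e\in\omega}$; hence by dominated convergence the whole sum tends to $0$. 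Thus $\overline\rho(\emptystr)=0$, and since $\overline\rho$ is a measure (Proposition~\ref{prop-rhobar-properties}) this forces $\overline\rho(\sigma)=0$ for every $\sigma$, i.e.\ $\overline\rho=0$.

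I do not anticipate any serious obstacle here: the only genuine content is the interchange-of-limits argument giving $\overline\rho=0$, which is a routine application of dominated convergence to the tail of an absolutely convergent series, together with the elementary observation that the zero measure makes blind randomness vacuous. It is worth noting the broader picture: because $\overline\rho\leq\rho$ turns any $\rho$-Martin-L\"of test $(U_i)_{i\in\omega}$ into a blind $\overline\rho$-Martin-L\"of test $(\llb U_i\rrb)_{i\in\omega}$, one always has $\bmlr_{\overline\rho}\subseteq\mlr_\rho$; the example above shows that this inclusion can fail to be an equality in the most extreme way, with the left-hand side empty and the right-hand side all of $\cs$.
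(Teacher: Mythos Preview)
Your proof is correct, but it uses a different witness than the paper. The paper takes $\rho$ to be the semi-measure from Theorem~\ref{thm-shen-counterexample}, for which $\mlr_\rho=\{0^\omega\}$; since that $\rho$ is induced by a functional $\Phi$ with $\lambda(\dom(\Phi))=0$, the characterization in Proposition~\ref{prop-rhobar-properties} immediately gives $\overline\rho(\cs)=\lambda(\dom(\Phi))=0$, hence $\bmlr_{\overline\rho}=\emptyset$. You instead take $\rho$ from Theorem~\ref{thm-EverythingMLR}, where $\mlr_\rho=\cs$, and compute $\overline\rho(\emptystr)=0$ directly from the explicit description of $\rho$ via a dominated-convergence argument. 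Both arguments pivot on the same mechanism (arranging $\overline\rho=0$ so that blind randomness is vacuous), but your choice of example yields the sharpest possible separation $\emptyset\subsetneq\cs$, at the cost of a slightly longer verification that $\overline\rho=0$; the paper's example gives only $\emptyset\subsetneq\{0^\omega\}$ but reads $\overline\rho=0$ off in one line from the functional characterization.
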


\begin{proof}
Let $\rho$ be the left-c.e.\ semi-measure from the proof of Theorem \ref{thm-shen-counterexample}, so that $\mlr_\rho=\{0^\omega\}$.  Since $\rho$ is induced by a functional $\Phi$ such that $\lambda(\dom(\Phi))=0$, by the characterization of $\overline\rho$ given in Proposition \ref{prop-rhobar-properties}, $\overline\rho(\cs)=\overline\rho(\llb\emptystr\rrb)=\lambda\{X:X\in\dom(\Phi)\}=0$.  Thus ${\mlr_{\overline\rho}=\emptyset}$. 
\end{proof}

The above proof also shows that $\bmlr_{\overline\rho}$ does not satisfy randomness preservation, since~$\Phi$ induces $\rho$ (and hence $\overline\rho$), but $\Phi(\mlr)=\{0^\omega\}\neq\bmlr_{\overline\rho}$.  Thus, blind Martin-L\"of randomness for $\overline\rho$ does not provide an adequate definition of randomness for $\rho$ according to the desiderata laid out in Section \ref{subsec-desiderata}.

Blind weak 2-randomness with respect to $\overline\rho$  fares much better than $\overline\rho$-Martin-L\"of randomness and blind Martin-L\"of randomness with respect to  $\overline\rho$.
As we now show, blind weak 2-randomness for $\overline\rho$ is equivalent to weak 2-randomness for $\rho$, and hence satisfies randomness preservation.  First, we need a lemma generalizing the definition of $\overline\rho(\sigma)$.

\begin{Lemma}\label{lem-LimitHelper}
Let $\rho$ be a left-c.e.\ semi-measure.  Let $E \subseteq 2^{<\omega}$ be prefix-free.  For each $m \in \omega$, let $E^m = \{\sigma \in 2^{<\omega} : (\exists \tau \in E)(\tau \preceq \sigma \andd |\sigma| = |\tau|+m)\}$.  Then $\overline\rho(\llb E\rrb) = \lim_{m \imp \infty}\rho(E^m)$.
\end{Lemma}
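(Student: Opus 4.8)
The plan is to reduce the statement about the prefix-free set $E$ to the single-string definition of $\overline\rho$ by exploiting the finite additivity of both $\overline\rho$ and the approximating sums, then pass to the limit. First I would recall that since $E$ is prefix-free, the cylinder $\llb E\rrb$ decomposes as a disjoint union $\bigcup_{\tau\in E}\llb\tau\rrb$, so that $\overline\rho(\llb E\rrb)=\sum_{\tau\in E}\overline\rho(\tau)$ by countable additivity of the measure $\overline\rho$ (established in Proposition~\ref{prop-rhobar-properties}). Applying the definition of $\overline\rho$ termwise, this equals $\sum_{\tau\in E}\lim_{m\to\infty}\sum_{\sigma\succeq\tau,\,|\sigma|=|\tau|+m}\rho(\sigma)$.

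The key observation is that for a fixed $\tau\in E$, the inner sum over strings $\sigma$ extending $\tau$ by exactly $m$ symbols is precisely the contribution of $\tau$ to $\rho(E^m)$: by definition, $E^m$ collects, for each $\tau\in E$, all length-$(|\tau|+m)$ extensions of $\tau$, and because $E$ is prefix-free these blocks are disjoint. Hence $\rho(E^m)=\sum_{\tau\in E}\sum_{\sigma\succeq\tau,\,|\sigma|=|\tau|+m}\rho(\sigma)$. Thus I want to conclude
\[
\overline\rho(\llb E\rrb)=\sum_{\tau\in E}\lim_{m\to\infty}\Bigl(\sum_{\sigma\succeq\tau,\,|\sigma|=|\tau|+m}\rho(\sigma)\Bigr)=\lim_{m\to\infty}\sum_{\tau\in E}\Bigl(\sum_{\sigma\succeq\tau,\,|\sigma|=|\tau|+m}\rho(\sigma)\Bigr)=\lim_{m\to\infty}\rho(E^m),
\]
where the only nontrivial move is interchanging the (possibly infinite) sum over $\tau\in E$ with the limit in $m$.

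The main obstacle is therefore justifying this interchange, which I expect to be the heart of the argument rather than a routine step, since $E$ may be infinite. The clean way to handle it is monotone convergence: for each fixed $\tau$ the inner sum $\sum_{\sigma\succeq\tau,\,|\sigma|=|\tau|+m}\rho(\sigma)$ is non-increasing in $m$ (by the semi-measure inequality $\rho(\sigma)\geq\rho(\sigma0)+\rho(\sigma1)$, exactly as noted just after the definition of $\overline\rho$), so the partial function $g_\tau(m)$ decreases to $\overline\rho(\tau)$. Consequently $\rho(E^m)=\sum_{\tau\in E}g_\tau(m)$ is itself non-increasing in $m$, and its limit exists. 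Since all terms are non-negative and each $g_\tau(m)\searrow\overline\rho(\tau)$ monotonically, I would apply the monotone (or dominated, using $g_\tau(m)\le g_\tau(0)=\rho(\tau)$ with $\sum_\tau\rho(\tau)\le\rho(\varepsilon)=1$ as a summable dominating sequence) convergence theorem for series to swap $\lim_m$ and $\sum_\tau$. This yields $\lim_{m\to\infty}\rho(E^m)=\sum_{\tau\in E}\overline\rho(\tau)=\overline\rho(\llb E\rrb)$, completing the proof.
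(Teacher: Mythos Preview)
Your proof is correct and follows essentially the same idea as the paper: both hinge on the fact that $\sum_{\tau\in E}\rho(\tau)\le 1$, which lets one control the tail of the sum. You package this as an appeal to dominated convergence for series, while the paper spells out the corresponding $\varepsilon$-argument by hand (splitting $E$ into a finite head $E_0$ and a tail $E_1$ of small $\rho$-mass), but the content is the same.
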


\begin{proof}
For all $m \in \omega$, $\rho(E^{m+1}) \leq \rho(E^m)$.  Thus it suffices to show that for every $k\in\omega$ there is some $m\in\omega$ such that $\rho(E^m) \leq \overline\rho(\llb E\rrb) + 1/k$.

Recall that, for all $\tau \in 2^{<\omega}$, $\overline\rho(\tau) = \inf_m\sum\{\rho(\sigma) : \sigma \succeq \tau \andd |\sigma| = |\tau|+m\}$.  Thus if $E$ is finite, then for all $k \in \omega$ there is an $m \in \omega$ such that $\rho(E^m) \leq \overline\rho(\llb E\rrb)+1/k$.  Suppose instead that $E$ is infinite, and let $k \in \omega$.  The fact that $E$ is prefix-free implies that $\rho(E)$ is finite.  Thus there is an $\ell \in \omega$ such that $\sum\{\rho(\sigma) : \sigma \in E \andd |\sigma| > \ell\} < 1/2k$.  Now let $E_0 = \{\tau \in E : |\tau| \leq \ell\}$, let $E_1 = \{\tau \in E : |\tau| > \ell\}$, and let $m$ be such that $\rho(E_0^m) \leq \overline\rho(\llb E_0\rrb) + 1/2k$.  Then
\[\rho(E^m) = \rho(E_0^m) + \rho(E_1^m) \leq \rho(E_0^m) + \rho(E_1) \leq \overline\rho(\llb E_0\rrb) + 1/2k + 1/2k \leq \overline\rho(\llb E\rrb) + 1/k.\qedhere\]
\end{proof}

\begin{Theorem}\label{prop-W2RvsBW2r}
Let $\rho$ be a left-c.e.\ semi-measure.  Then $X \in 2^\omega$ is weakly 2-random for $\rho$ if and only if $X$ is blindly weakly 2-random for $\overline\rho$.
\end{Theorem}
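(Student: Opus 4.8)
The plan is to prove the two implications separately, each by contraposition, by converting a capturing test of one kind into a capturing test of the other. Two ingredients do all the work. First, by Proposition \ref{prop-rhobar-properties} we have $\overline\rho \leq \rho$ pointwise on strings; since $\overline\rho$ is a genuine (countably additive) measure, this yields $\overline\rho(\llb E\rrb) = \sum_{\sigma \in E}\overline\rho(\sigma) \leq \sum_{\sigma \in E}\rho(\sigma) = \rho(E)$ for every prefix-free $E \subseteq \str$. Second, Lemma \ref{lem-LimitHelper} gives the key approximation $\overline\rho(\llb E\rrb) = \lim_{m \to \infty}\rho(E^m)$, with the sequence $\rho(E^m)$ nonincreasing in $m$.

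The direction ``blindly weakly $2$-random for $\overline\rho$ implies weakly $2$-random for $\rho$'' is the easy one. Arguing contrapositively, suppose $X \notin \wtwor_\rho$, so there is a generalized $\rho$-Martin-L\"of test $(U_i)_{i\in\omega}$ (with each $U_i$ prefix-free) such that $\rho(U_i) \to 0$ and $X \in \bigcap_i \llb U_i\rrb$. Setting $\mathcal{V}_i = \llb U_i\rrb$, the sets $\mathcal{V}_i$ are uniformly $\Sigma^0_1$, satisfy $\overline\rho(\mathcal{V}_i) \leq \rho(U_i) \to 0$ by the inequality above, and capture $X$. Hence $(\mathcal{V}_i)_{i\in\omega}$ is a blind generalized $\overline\rho$-Martin-L\"of test capturing $X$, so $X$ is not blindly weakly $2$-random for $\overline\rho$.

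The reverse direction is where the real content lies. Again arguing contrapositively, suppose $X$ is not blindly weakly $2$-random for $\overline\rho$, witnessed by a blind generalized $\overline\rho$-test $(\mathcal{V}_i)_{i\in\omega}$ with $\mathcal{V}_i = \llb V_i\rrb$ for prefix-free uniformly c.e.\ $V_i$, with $\overline\rho(\mathcal{V}_i) \to 0$ and $X \in \bigcap_i \mathcal{V}_i$. The plan is to build a generalized $\rho$-test capturing $X$ by replacing each $V_i$ with a sufficiently deep truncation $U_i = V_i^{m_i}$. Containment is automatic: since $X \in \mathcal{V}_i$ there is $\tau \in V_i$ with $\tau \preceq X$, and then $X\uh(|\tau|+m) \in V_i^m$ for every $m$, so $X \in \llb V_i^{m}\rrb$ for all $m$. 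For the measure bound, Lemma \ref{lem-LimitHelper} guarantees that for each $i$ there exists $m_i$ with $\rho(V_i^{m_i}) \leq \overline\rho(\mathcal{V}_i) + 2^{-i}$, whence $\rho(U_i) \to 0$ as desired.

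The main obstacle is precisely that this choice of $m_i$ must be made effectively, so that $(U_i)_{i\in\omega}$ is genuinely uniformly c.e.\ (a generalized $\rho$-test has no oracle access). The difficulty is that $\rho$ is only left-c.e., so $\rho(V_i^m)$ is approximable from below but its smallness cannot be certified at any finite stage, while $\overline\rho(\mathcal{V}_i)$ itself is not computable. I would resolve this not by searching for a correct $m_i$ but by letting the truncation depth grow \emph{along the enumeration}: enumerate $V_i$ and, as strings appear and the stagewise lower approximations to $\rho$ increase, push the extensions placed into $U_i$ to ever greater depths, using a movable marker that deepens whenever the currently approximated mass threatens to exceed the target. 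The point is that the total mass that can be ``lost'' is conserved, since $\sum_{m}\bigl(\rho(V_i^m) - \rho(V_i^{m+1})\bigr) = \rho(V_i) - \overline\rho(\mathcal{V}_i) \leq 1$; this telescoping keeps the accumulated mass of $U_i$ under control, while the actual convergence $\rho(U_i) \to 0$ is then read off from Lemma \ref{lem-LimitHelper} (and may itself be non-effective, which is permitted). Verifying that this deepening enumeration simultaneously keeps $(U_i)$ uniformly c.e.\ and forces $\rho(U_i) \to 0$ is the step I expect to require the most care.
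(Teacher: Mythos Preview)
Your easy direction is correct and matches the paper exactly.

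For the hard direction you correctly identify the obstacle---choosing $m_i$ with $\rho(V_i^{m_i})$ close to $\overline\rho(\llb V_i\rrb)$ cannot be done effectively---but your movable-marker workaround is where the proposal breaks down. Once a string has been enumerated into $U_i$ it stays there, and since $\rho$ is only left-c.e.\ its $\rho$-mass may continue to grow long after your marker has moved on to deeper levels. The telescoping identity $\sum_m\bigl(\rho(V_i^m)-\rho(V_i^{m+1})\bigr)=\rho(V_i)-\overline\rho(\llb V_i\rrb)$ bounds the total mass \emph{lost} along one branch of deepenings of a single $V_i$; it does not bound $\rho(U_i)$ itself, and in particular gives no mechanism for forcing $\rho(U_i)\to 0$ as $i\to\infty$. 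Any per-$i$ construction of this shape seems to run into the same wall: you are trying to approximate the non-effective quantity $\overline\rho(\llb V_i\rrb)$ from information about $\rho$ alone, one $i$ at a time.

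The paper sidesteps this entirely with a simple device you did not consider: rather than deepen each $V_i$ separately, let $F_n$ code the \emph{intersection} $\bigcap_{i\le n}\llb V_i\rrb$ by strings at depth exactly $n$ beyond the longest witnessing prefix. That is, $\sigma\in F_n$ iff each $V_i$ with $i\le n$ contains some $\tau_i\preceq\sigma$ and $|\sigma|=\max_{i\le n}|\tau_i|+n$. This is manifestly uniformly c.e.\ (no search for a good $m_i$ is needed), and $X\in\bigcap_n\llb F_n\rrb$ is immediate. The point is that every string in $F_{n+m}$ extends some string in $V_n^{n+m}$, so by the semi-measure inequality $\rho(F_{n+m})\le\rho(V_n^{n+m})$. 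Fixing $n$ and letting $m\to\infty$, Lemma~\ref{lem-LimitHelper} gives $\lim_m\rho(F_m)\le\overline\rho(\llb V_n\rrb)$; since this holds for every $n$ and $\overline\rho(\llb V_n\rrb)\to 0$, you conclude $\rho(F_n)\to 0$. The intersection is exactly what lets a \emph{single} uniformly c.e.\ sequence sit inside \emph{every} $\llb V_n\rrb$ at arbitrarily large depth simultaneously, which is the ingredient your per-$i$ construction was missing.
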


\begin{proof}
For every $E \subseteq 2^{<\omega}$, $\overline\rho(\llb E\rrb) \leq \rho(E)$, and therefore every generalized $\rho$-Martin-L\"of test is also a blind generalized $\overline\rho$-Martin-L\"of test.  Thus if $X$ is blindly weakly 2-random for $\overline\rho$, then $X$ is weakly 2-random for $\rho$.

Conversely, suppose that $X$ is not blindly weakly 2-random for $\overline\rho$. Let $(\MP{U}_n)_{n \in \omega}$ be a blind generalized $\overline\rho$-Martin-L\"of capturing $X$, and let $(E_n)_{n \in \omega}$ be a uniformly c.e.\ sequence of prefix-free subsets of $2^{<\omega}$ such that, for all $n \in \omega$, $\MP{U}_n = \llb E_n\rrb$.  Let $(F_n)_{n \in \omega}$ be the uniformly c.e.\ sequence of prefix-free subsets of $2^{<\omega}$ where $\sigma$ is enumerated in $F_n$ if and only if every $E_i$ with $i \leq n$ enumerates a $\tau_i \preceq \sigma$, and $|\sigma| = \max\{|\tau_i| : i \leq n\} + n$.  Then, for all $n \in \omega$, $\llb F_n\rrb = \bigcap_{i \leq n}\llb E_i\rrb$.  Therefore $X \in  \bigcap_{n \in \omega}\llb E_n\rrb = \bigcap_{n \in \omega}\llb F_n\rrb$.  Furthermore, for all $n \in \omega$, $\rho(F_{n+1}) \leq \rho(F_n)$.  It remains to show that $\lim_{n \imp \infty}\rho(F_n) = 0$.  To see this, observe that, for all $m,n \in \omega$, $\rho(F_{n+m}) \leq \rho(E_n^{n+m})$.  Thus, for all $n \in \omega$,
\begin{align*}
\lim_{m \imp \infty}\rho(F_m) = \lim_{m \imp \infty}\rho(F_{n+m}) \leq \lim_{m \imp \infty}\rho(E_n^{n+m}) = \overline\rho(\llb E_n\rrb),
\end{align*}
where the last equality is by Lemma~\ref{lem-LimitHelper}.  Thus, for all $n \in \omega$, $\lim_{n \imp \infty}\rho(F_n) \leq \overline\rho(\llb E_n\rrb)$.  Since $\lim_{n \imp \infty}\overline\rho(\llb E_n\rrb) = 0$, we must have $\lim_{n \imp \infty}\rho(F_n) = 0$ as well.
\end{proof}

The relationships between the various notions considered here are summarized by the following diagram, where a strict inequality means that there is some semi-measure $\rho$ separating the two notions.

\begin{center}
\begin{tabular}{ccc}
$\wtwor_{\overline\rho}$ & $\subsetneq$ & $\mlr_{\overline\rho}$\\

\rotatebox{270}{$\subsetneq$}   & & \rotatebox{270}{$\subsetneq$}\\

& & \\

$\bwtwor_{\overline\rho}$ & $\subsetneq$  &$\bmlr_{\overline\rho}$\\

\rotatebox{270}{$=$} & & \rotatebox{270}{$\subsetneq$}\\
& & \\

$\wtwor_\rho$ & $\subsetneq$ & $\mlr_\rho$\\
\end{tabular}
\end{center}

\bigskip

\subsection{The No Randomness Ex Nihilo principle for weak 2-randomness with respect to a semi-measure}

As we showed in Section \ref{subsec-w2r}, for each left-c.e.\ semi-measure $\rho$, $\rho$-weak 2-randomness satisfies coherence, randomness preservation, and the computable sequence condition.  The status of the No Randomness Ex Nihilo principle, however, is still open.

\begin{Question}\label{q-ex-nihilo-w2r}
Let $\rho$ be a left-c.e.\ semi-measure.  If $\Phi$ is a Turing functional that induces $\rho$ and $Y\in\wtwor_\rho$, is there some $X\in\wtwor$ such that $\Phi(X)=Y$?
\end{Question}

\noindent A positive answer to Question \ref{q-ex-nihilo-w2r} would also allow us to answer Shen's question for weak 2-randomness, which also remains open.

\begin{Question}
If $\Phi$ and $\Psi$ are Turing functionals such that $\lambda_\Phi(\sigma)=\lambda_\Psi(\sigma)$ for every $\sigma\in\str$, does it follow that $\Phi(\wtwor)=\Psi(\wtwor)$?
\end{Question}

Some partial progress on answering Question \ref{q-ex-nihilo-w2r} has been made.  We show that the No Randomness Ex Nihilo principle holds for weak 2-randomness with respect to any computable measure.

\begin{Theorem} \label{thm:ex-nihilo-w2r}
 Let $\Phi$ be an almost total Turing functional. If $Y\in\wtwor_{\lambda_\Phi}$, there is some $X\in\wtwor$ such that $\Phi(X)=Y$.
\end{Theorem}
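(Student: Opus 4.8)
The plan is to use the minimal pair characterization of weak 2-randomness (Theorem~\ref{thm-w2r-minimal-pair}) on both sides of $\Phi$: for the computable measure $\lambda_\Phi$ on the output side and for the Lebesgue measure $\lambda$ on the input side. Write $\mu = \lambda_\Phi$; since $\Phi$ is almost total, $\mu$ is a computable measure and is the push-forward of $\lambda$ along $\Phi$, so that $\mu(\mathcal{A}) = \lambda(\Phi^{-1}(\mathcal{A}))$ for every Borel set $\mathcal{A}$. First I would dispose of the case where $Y$ is computable. If $Y$ is computable but is not an atom of $\mu$, then $\lim_n \mu(Y \uh n) = 0$, so the singletons $U_i = \{Y \uh i\}$ form a generalized $\mu$-Martin-L\"of test capturing $Y$, contradicting $Y \in \wtwor_\mu$. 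Hence $Y$ is an atom of $\mu$, i.e.\ $\lambda(\Phi^{-1}(\{Y\})) = \mu(\{Y\}) > 0$. As the weakly 2-random sequences have full Lebesgue measure, they meet the positive-measure set $\Phi^{-1}(\{Y\})$, and any $X$ in the intersection satisfies $X \in \wtwor$ and $\Phi(X) = Y$.

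For the remaining case, assume $Y$ is not computable. By Theorem~\ref{thm-w2r-minimal-pair} applied to $\mu$, we have $Y \in \mlr_\mu$ and $Y$ forms a Turing minimal pair with $\emptyset'$. Since every $\lambda$-Martin-L\"of random sequence is non-computable, the same theorem applied to $\lambda$ reduces the goal to producing an $X \in \mlr$ with $\Phi(X) = Y$ that forms a minimal pair with $\emptyset'$; such an $X$ is automatically weakly 2-random. The key preliminary observation is a push-forward nullity. Let $B = \{Z \in \cs : \Phi^{-1}(Z) \cap \wtwor = \emptyset\}$ be the set of outputs with no weakly 2-random preimage. If $\Phi(X') \in B$, then $X'$ is itself a preimage of a member of $B$ and hence $X' \notin \wtwor$; thus $\Phi^{-1}(B) \subseteq \wtwor\compl$, and therefore $\mu(B) = \lambda(\Phi^{-1}(B)) \le \lambda(\wtwor\compl) = 0$. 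So the bad outputs form a $\mu$-null set, and it remains only to show that the weakly 2-random sequence $Y$ cannot lie in $B$.

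The main obstacle is precisely to convert this nullity into an honest generalized $\mu$-Martin-L\"of test—a uniformly c.e.\ sequence $(T_i)$ with $\mu(T_i) \to 0$—whose intersection captures the bad outputs, because a weakly 2-random $Y$ is guaranteed only to escape $\mu$-null $\Pi^0_2$ classes, not arbitrary $\mu$-null sets. Here I would extract such a test from the failure of the minimal pair property. For a bad non-computable output $Z$, the ML No Randomness Ex Nihilo principle (Theorem~\ref{thm:pres-ex-nihilo}(ii)) furnishes an ML-random preimage, and by the argument above every such preimage $X$ fails to form a minimal pair with $\emptyset'$, so there are indices $a,b$ with $\Phi_a^X = \Phi_b^{\emptyset'}$ total and non-computable. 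Since the sets computable from $\emptyset'$ are countable and uniformly of the form $\Phi_b^{\emptyset'}$, these ``threats'' can be enumerated relative to $\emptyset'$; and for a fixed non-computable $\Delta^0_2$ value $W = \Phi_b^{\emptyset'}$ the input-side set $\{X : \Phi_a^X = W\}$ is null, with uniformly $\Sigma^0_1(\emptyset')$ approximations $\{X : \Phi_a^X \succeq W \uh n\}$ of vanishing measure. Pushing these approximations forward through $\Phi$ yields candidate output-side components whose $\mu$-measures tend to $0$.

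The technical heart—and the reason the general semi-measure analogue in Question~\ref{q-ex-nihilo-w2r} remains open—is to organize these pieces into a \emph{single unrelativized} uniformly c.e.\ sequence, so that it constitutes a legitimate generalized $\mu$-test and is therefore passed by $Y$. This is exactly where the computability of $\mu = \lambda_\Phi$ must be used: it is what lets one replace the $\emptyset'$-enumeration of threats by a plain effective approximation of bounded total weight, and it is precisely what is unavailable in the left-c.e.\ semi-measure setting. Once such a test is assembled, $Y \notin B$, so $Y$ has a weakly 2-random preimage, which by the reduction of the second paragraph completes the proof.
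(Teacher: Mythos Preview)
Your reduction via Theorem~\ref{thm-w2r-minimal-pair} is exactly the starting point of the paper's alternative (computability-theoretic) proof, and the treatment of computable $Y$ together with the observation $\mu(B)=0$ are both fine. But the step you yourself flag as the technical heart is left undone, and the sketch you offer does not close the gap. The approximations $\{X : \Phi_a^X \succeq W\uh n\}$ are only uniformly $\Sigma^0_1(\emptyset')$, and your one-line appeal to the computability of $\mu$ does not explain how to unrelativize them: computability of $\mu$ lets you \emph{estimate} $\mu$-measures of $\Sigma^0_1$ sets, but it says nothing about replacing $\Sigma^0_1(\emptyset')$ index sets by $\Sigma^0_1$ ones. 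There is also a problem with ``pushing forward'': for an input-side set $S$ one has $\mu(\Phi(S))=\lambda(\Phi^{-1}(\Phi(S)))\ge\lambda(S)$, so smallness is not preserved, and the image of a $\Sigma^0_1$ class under $\Phi$ need not be $\Sigma^0_1$. Thus neither the measure bound nor the definability of your proposed output-side test levels is established, and the conclusion $Y\notin B$ does not follow from what you have written.

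The paper avoids manufacturing an output-side test for $B$ altogether. Its main (analytic) proof works on the input side one generalized $\lambda$-test at a time, building nested $\Pi^0_1$ classes $\MP{C}_0\supseteq\MP{C}_1\supseteq\cdots$, each meeting $\Phi^{-1}(Y)$, by removing a single level of the $(n{+}1)$st test at step $n{+}1$; weak $2$-randomness of $Y$ is invoked only to guarantee that \emph{some} level can be removed while keeping $Y\in\Phi(\MP{C}_{n+1})$, and compactness produces the limit preimage. The alternative proof---the one your outline is closest to---never builds a test: once $\mlr^d\cap\Phi^{-1}(Y)$ is seen to be a nonempty $\Pi^0_1(Y)$ class on which $\Phi$ is total, the Jockusch--Soare cone-avoidance basis theorem (relativized to $Y$) directly yields a member computing no non-computable $\Delta^0_2$ set, using that $Y$ itself computes none. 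That basis-theorem step is the idea missing from your argument.
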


\begin{proof}
Let  $(\MP{U}^e_i)_{e,i\in\omega}$ be a (non-effective) listing of all generalized $\lambda$-Martin-L\"of tests.  That is, every generalized Martin-L\"of test is of the form $(\MP{U}^e_i)_{i\in\omega}$ for some~$e$. Without loss of generality we can assume that the first test $(\MP{U}^0_i)_{i\in\omega}$ is the universal Martin-L\"of test. Let $Y\in\wtwor_{\lambda_\Phi}$. Since $Y$ is in particular $\lambda_\Phi$-Martin-L\"of random, by Theorem~\ref{thm:pres-ex-nihilo}, $\Phi^{-1}(Y)\cap\mlr\neq\emptyset$. In other words, for some $i_0$, the pre-image of $Y$ under $\Phi$ meets the $\Pi^0_1$ class $\MP{C}_0=(\MP{U}^0_{i_0})\compl$. We further note that $\Phi$ is total on $\MP{C}_0$. Indeed, $\Phi$ is almost total, which means that $\dom(\Phi)\compl$ has measure~$0$. But $\dom(\Phi)\compl$  is a $\Sigma^0_2$ set, i.e., a union of effectively closed sets, which thus must all have measure~$0$. Since no Martin-L\"of random real can be contained in an effectively closed set of measure~$0$, and since $\MP{C}_0$ contains only Martin-L\"of  random elements, this shows $ \MP{C}_0 \cap \dom(\Phi)\compl = \emptyset$, i.e., $ \MP{C}_0 \subseteq \dom(\Phi)$.

We now build a sequence of non-empty $\Pi^0_1$ classes 
$\MP{C}_1, \MP{C}_2,\dots$ in such a way that
\begin{itemize}
\item $\MP{C}_i\supseteq\MP{C}_{i+1}$ for every $i\geq 0$,
\item for all $n$, all members of $\MP{C}_n$ pass all the tests $(\MP{U}^e_i)_{i\in\omega}$ for $e \leq n$, and
\item for all~$n$, $\Phi^{-1}(Y)\cap\MP{C}_n\neq\emptyset$.
\end{itemize}
Note that since all $\MP{C}_i$ are contained in $\MP{C}_0$, this in particular means that $\Phi$ is total on all~$\MP{C}_i$. Suppose that $\MP{C}_0, \dotsc, \MP{C}_n$ with these properties have already been built. To build $\MP{C}_{n+1}$, we do the following. Suppose that for all~$i$ we have 
\[
Y \in \Phi(\MP{C}_n) \setminus \Phi(\MP{C}_n \cap (\MP{U}^{n+1}_i)\compl).
\]
The pre-image of the set $\Phi(\MP{C}_n) \setminus \Phi(\MP{C}_n \cap (\MP{U}^{n+1}_i)\compl)$ under $\Phi$ is contained in $\MP{U}^{n+1}_i$ and therefore its measure tends to~$0$ as $i$ tends to infinity. By definition of the induced measure $\lambda_\Phi$, this implies that
\[
\lambda_\Phi \left( \Phi(\MP{C}_n) \setminus \Phi(\MP{C}_n \cap (\MP{U}^{n+1}_i)\compl) \right) \rightarrow 0
\]
and thus the set $\bigcap_i (\Phi(\MP{C}_n) \setminus \Phi(\MP{C}_n \cap (\MP{U}^{n+1}_i)\compl))$ is a $\Pi^0_2$ set of $\lambda_\Phi$-measure~$0$ containing~ $Y$, contradicting the fact that $Y$ is $\lambda_\Phi$-weakly 2-random. Thus, there exists~$j$ such that $Y \in \Phi(\MP{C}_n \cap (\MP{U}^{n+1}_j)\compl)$, and we set $\MP{C}_{n+1} = \MP{C}_n \cap (\MP{U}^{n+1}_j)\compl$. This ensures that all elements pass the $(n+1)^{\mathrm{st}}$ generalized Martin-L\"of test. This finishes the construction of the $\MP{C}_n$'s. 

To finish the proof, since $\Phi^{-1}(Y)\cap\MP{C}_i\neq\emptyset$ for every $i\in\omega$, choose $X_i\in\Phi^{-1}(Y)\cap\MP{C}_i$ for each $i$. By the compactness of $\cs$, one can assume, up to extraction of a subsequence, that the sequence $(X_i)_{i\in\omega}$ converges to a limit~$X^*$. For any given~$n$, almost all $i$ are greater than~$n$, and thus $X_{i} \in \MP{C}_{i} \subseteq \MP{C}_n$. Since $\MP{C}_n$ is closed, it implies that the limit $X^*$ belongs to $\MP{C}_n$. This being true for all~$n$, by construction of the $\MP{C}_n$, $X^*$ passes all generalized Martin-L\"of tests, and therefore is weakly 2-random. Moreover, by continuity of Turing functionals on their domain:
\[
\lim_{i \rightarrow \infty}  \Phi(X_{i}) = \Phi(X^*),
\]
but $\Phi(X_{i})=Y$ for all~$i$, therefore $\Phi(X^*)=Y$. This establishes the existence of a weakly 2-random sequence in $\Phi^{-1}(\{Y\})$ and completes the proof.  
\end{proof}

The above proof of Theorem~\ref{thm:ex-nihilo-w2r} is essentially analytic. Let us mention that a completely different proof, of computability-theoretic flavor, can also be given.  Suppose that $Y$ is $\lambda_\Phi$-weakly 2-random. Then by Theorem~\ref{thm-w2r-minimal-pair}, $Y$ does not compute any non-computable~$\Delta^0_2$ set. Let $\MP{C}_0$ be the set defined in the previous proof (on which~$\Phi$ is total) and let $\mathcal{P}=\MP{C}_0 \cap \Phi^{-1}(Y)$, so that $\mathcal{P}\subseteq\mlr\cap\Phi^{-1}(Y)$. It is well-known that given a $\Pi^0_1$ class and a countable collection of reals $(A_i)_{i \in \omega}$, there is a member  of the $\Pi^0_1$ class which does not compute any $A_i$ (Jockusch and Soare~\cite{JockuschS1972} proved this fact for a single~$A$, but it is easy to see that their construction, a forcing argument, can be extended to a countable collection of~$A_i$). Taking the collection $(A_i)_{i\in\omega}$ to consist of the non-computable $\Delta^0_2$ sets, relativizing the previous theorem to~$Y$, and using the fact that each $A_i$ is not $Y$-computable, there exists a member~$X$ of $\mathcal{P}$ which does not compute any~$A_i$. Thus, $X\in\Phi^{-1}(\{Y\})$ is Martin-L\"of random, and does not compute any non-computable $\Delta^0_2$ set. Applying Theorem~\ref{thm-w2r-minimal-pair} again, this shows that $X$ is weakly 2-random.

\bibliographystyle{alpha}
\bibliography{semimeasures}

\end{document}